\theoremstyle{plain}
\newtheorem{theorem}{Theorem}[section]
\newtheorem{dfn}{Definition}
\newtheorem{lemma}[theorem]{Lemma}
\newtheorem{claim}{Claim}[theorem]
\newtheorem*{claim-nonumber}{Claim}
\newtheorem{proposition}[theorem]{Proposition}
\newtheorem*{remark}{Remark}
\newtheorem{question}{Question}[section]
\newtheorem{corollary}[theorem]{Corollary}
\def\deg{{\rm deg}}
\def\C{{\mathcal C}}
\def\P{{\mathcal P}}
\def\G{{\mathcal G}}
\def\M{{\mathcal M}}
\def\D{{\mathcal D}}
\def\R{{\mathcal R}}
\def\bG{{\mathbb G}}
\def\F{{\mathcal F}}
\def\thredeg{{p_{\G}^{\D_r}}}
\title{Phase Transition of Degeneracy in Minor-Closed Families}
\author{Chun-Hung Liu\thanks{Department of Mathematics, Texas A\&M University. Email: chliu@math.tamu.edu. Partially supported by NSF under awards DMS-1664593, DMS-1929851, DMS-1954054 and CAREER award DMS-2144042.} \ \ \ \ \ \ \ \ \ \ 
	 Fan Wei\thanks{Department of Mathematics, Princeton University, Princeton, NJ 08544. Email: fanw@princeton.edu. Research supported by NSF Award DMS-1953958.}}
\begin{document}

\maketitle

\begin{abstract}
Given an infinite family $\G$ of graphs and a monotone property $\P$, an (upper) threshold for $\G$ and $\P$ is a ``fastest growing" function $p: \mathbb{N} \to [0,1]$ such that $\lim_{n \to \infty} \Pr(G_n(p(n)) \in \P)= 1$ for any sequence $(G_n)_{n \in \mathbb{N}}$ over $\G$ with $\lim_{n \to \infty}\lvert V(G_n) \rvert = \infty$, where $G_n(p(n))$ is the random subgraph of $G_n$ such that each edge remains independently with probability $p(n)$.

In this paper we study the upper threshold for the family of $H$-minor free graphs and the property of being $(r-1)$-degenerate and apply it to study the thresholds for general minor-closed families and the properties for being $r$-choosable and $r$-colorable.
Even a constant factor approximation for the upper threshold for all pairs $(r,H)$ is expected to be challenging by its close connection to a major open question in extremal graph theory.
We determine asymptotically the thresholds (up to a constant factor) for being $(r-1)$-degenerate  (and $r$-choosable, respectively) for a large class of pairs $(r,H)$, including all graphs $H$ of minimum degree at least $r$ and all graphs $H$ with no vertex-cover of size at most $r$, and provide lower bounds for the rest of the pairs of $(r,H)$.
\end{abstract}

{\bf Keywords:} Graph minors, random subgraphs, degeneracy, phase transition, graph coloring. 

\section{Introduction}
Given a graph $G$ and a real number $0 \leq p \leq 1$, let $G(p)$ be the random subgraph of $G$ where each edge remains independently with probability $p$. 
Note that if $G$ is an $n$-vertex complete graph, this is the well-studied Erd\H{o}s-R{\'e}nyi model $\mathbb{G}(n,p)$.

Studying the random perturbation $G(p)$ of a graph $G$ is of both theoretical and practical interests because most instances in the real world are subject to random noises.
It is therefore valuable to study robustness of a property or a specific algorithm. 
For example, Spielman and Teng \cite{ST, ST2} introduced smoothed analysis and studied a continuous version of random perturbations (e.g., noises are Gaussian distributions).
They \cite{ST} showed that the simplex method runs in polynomial time in expectation even though the worst-case scenario runs in exponential time, explaining rigorously why the simplex method is fast in practice. 
Since then, smoothed analysis for other hard graph problems has been studied (e.g. \cite{ABPW,ER}). 
However, the techniques in \cite{ABPW,ER} used for continuous random models stop working in the random model $G(p)$, where each edge follows a discrete distribution. 

Contrary to smoothed analysis, Bennett, Reichman and Shinkar \cite{BRS} showed that for some NP-hard problems related to coloring and independence number, the worst-case instances essentially remain hard under random perturbations. 

Coloring problems on planar graphs have been extensively studied.
It is trivial to determine whether a planar graph is 4-colorable\footnote{In this paper, a graph is {\it $k$-colorable} for some integer $k$ if its vertices can be colored with $k$ colors such that vertices with the same color are not adjacent.} by the Four Color Theorem, though it is NP-hard to determine whether a planar graph is 3-colorable or not \cite{gjs}. 
It is natural to consider the chromatic number of random perturbations of planar graphs.
In particular, the following question raised by Daniel Reichman (via private communication) remains open.

\begin{question}\label{eq1}
What are the values of $p$ such that for each planar graph $G$, the random subgraph $G(p)$ is $3$-colorable  with high probability?
\end{question}

In this paper we systematically study a generalization of Question \ref{eq1} by investigating the (crude) thresholds for stronger properties (degeneracy and choosability) on more general graph classes (minor-closed families), which is related to an open problem in extremal graph theory. 
One simple corollary of our main results is the following partial answer of Question \ref{eq1}.

\begin{corollary} \label{cor_planar_first}
For any sequence $(G_n)_{n \in {\mathbb N}}$ of planar graphs with $|V(G_i)|=i$ for each $i \in {\mathbb N}$, if $p=o(n^{-1/5})$, then $G_n(p)$ is 3-colorable a.a.s.\footnote{Given a sequence of events $(E_n)_{n \in  \mathbb{N}}$ in a probability space, we say $E_n$ happens {\it asymptotically almost surely} (or {\it a.a.s.}\ in short) if $\lim_{n \to \infty} \Pr(E_n) = 1$.}
Moreover, there exists a sequence $(G_n)_{n \in {\mathbb N}}$ of planar graphs with $|V(G_i)|=i$ for each $i \in {\mathbb N}$ such that if $p=\omega(n^{-1/6})$, then $G_n(p)$ is not 3-colorable a.a.s.
\end{corollary}

Corollary \ref{cor_planar_first} can be restated as ``the threshold probability for being 3-colorable and for the class of planar graphs is $\Omega(n^{-1/5})$ and $O(n^{-1/6})$''.
The formal definition of the thresholds is included in Definition \ref{def upper threshold}. 

Corollary \ref{cor_planar_first} follows from a very special case of our general results (Theorems \ref{thm: main critical exponent 1} and \ref{thm:critical exponent lower bound}).
More corollaries for other extensively studied minor-closed families\footnote{One example included in Corollary \ref{cor:many_intro} is the class of graphs with bounded Colin de Verdi\`{e}re parameter. This parameter, denoted by $\mu(G)$, is defined to be the largest corank of certain matrices associated with a graph $G$. Its formal definition is long and is omitted in this paper because we do not need the formal definition to derive results from our main theorem. It is known that $\mu(H) \leq \mu(G)$ if $H$ is a minor of $G$ \cite{c}, so the class of graphs whose $\mu$ is at most a fixed constant $k$ is a minor-closed family. This parameter can capture certain topological properties of graphs. It is known that $\mu(G) \leq 1$ if and only if $G$ is a disjoint union of paths \cite{c}; $\mu(G) \leq 2$ if and only if $G$ is outerplanar \cite{c}; $\mu(G) \leq 3$ if and only if $G$ is planar \cite{c}; $\mu(G) \leq 4$ if and only if $G$ is linklessly embeddable \cite{ls,rst_survey,rst}.} can be similarly derived from those main results.
We include some examples in Corollary \ref{cor:many_intro}.

\begin{corollary} \label{cor:many_intro}
All results in Table \ref{table:cor} hold. 

	\begin{table}
	\begin{center}
\begin{tabular}{|c||c||c||c||c|}
	\hline
	\multirow{2}{9em}{Graph class} & \multirow{2}{6em}{Range for $r$, where $r \in {\mathbb N}$} & \multicolumn{3}{|c|}{Properties} \\
	\cline{3-5}
	& & $(r-1)$-degenerate & $r$-choosable & $r$-colorable \\
	\clineB{1-5}{4} 
	& & & & \\
	\multirow{3}{9em}{Graphs embeddable in a surface $\Sigma$ with Euler genus $g$, where $\Sigma \neq {\mathbb S}^2$} & $r \geq \lfloor \frac{7+\sqrt{1+24g}}{2} \rfloor$ & $\Theta(1)$ & $\Theta(1)$ & $\Theta(1)$ \\
	\cline{2-5}	
	& & & & \\
	& $4 \leq r<\lfloor \frac{7+\sqrt{1+24g}}{2} \rfloor$, & $\Omega(n^{-\frac{2}{(r+1)r-2}})$  & $\Omega(n^{-\frac{2}{(r+1)r-2}})$ & $\Omega(n^{-\frac{2}{(r+1)r-2}})$ \\
	& and if $\Sigma$ is the Klein &  &  &  \\
	& bottle, then $r \neq 6$ & & & \\
	\cline{2-5}
	& \multirow{2}{9em}{$r=6$, and $\Sigma$ is the Klein bottle} & $\Omega(n^{-1/20})$ & $\Theta(1)$ & $\Theta(1)$ \\
	& &  & & \\
	\cline{2-5}
	& $r=3$ & $\Theta(n^{-1/5})$ & $\Theta(n^{-1/5})$ & $ \Omega(n^{-1/5})$ \\
	& & & & $O(n^{-1/6})$ \\
	\cline{2-5}
	& $r=2$ & $\Theta(n^{-1/2})$ & $\Theta(n^{-1/2})$ & $\Omega(n^{-1/2})$ \\
	& & & & $O(n^{-1/3})$ \\
	\hline
	\multirow{5}{9em}{Graphs with Colin de Verdi\`{e}re parameter $\mu \leq k$, for some fixed integer $k$} & $r=k \geq 2$ & $\Theta(n^{-\frac{1}{2k-1}})$ & $\Theta(n^{-\frac{1}{2k-1}})$ & $\Omega(n^{-\frac{1}{2k-1}})$ \\
	& & & & $O(n^{-\frac{2}{(k+1)k}})$ \\
	\cline{2-5}
	& $2 \leq r \leq k-1$ & $\Theta(n^{-\frac{1}{r}})$ & $\Theta(n^{-\frac{1}{r}}) $ & $\Omega(n^{-\frac{1}{r}})$ \\
	& & & & $O(n^{-\frac{2}{(r+1)r}})$ \\
	& & & & \\
	\hline
	\multirow{3}{9em}{Linkless embeddable graphs (i.e.\ $\mu \leq 4$)} & $r=5$ & $\Omega(n^{-1/12})$ & $\Omega(n^{-1/12})$ & $\Theta(1)$ \\
	& & $O(n^{-1/42})$ & $O(n^{-1/1470})$ & \\
	\cline{2-5}
	& $r=4$ & $\Theta(n^{-1/7})$ & $\Theta(n^{-1/7}) $ & $\Omega(n^{-1/7})$ \\
	& & & & $O(n^{-1/10})$ \\
	\cline{2-5}
	& $r=3$ & $\Theta(n^{-1/3})$ & $\Theta(n^{-1/3}) $ & $\Omega(n^{-1/3})$ \\
	& & & & $O(n^{-1/6})$ \\
	\cline{2-5}
	& $r=2$ & $\Theta(n^{-1/2})$ & $\Theta(n^{-1/2}) $ & $\Omega(n^{-1/2})$ \\
	& & & & $O(n^{-1/3})$ \\
	\hline
	\multirow{3}{9em}{Planar graphs (i.e.\ $\mu \leq 3$)} & $r \geq 6$ & $\Theta(1)$ & $\Theta(1)$ & $\Theta(1)$ \\
	\cline{2-5}
	& $r=5$ & $\Omega(n^{-1/14})$ & $\Theta(1)$ & $\Theta(1)$ \\
	& & $O(n^{-1/30})$ & & \\
	\cline{2-5}
	& $r=4$ & $\Omega(n^{-1/9})$ & $\Omega(n^{-1/9})$ & $\Theta(1)$ \\
	& & $O(n^{-1/12})$ & $O(n^{-1/219})$ & \\
	\cline{2-5}
	& $r=3$ & $\Theta(n^{-1/5})$ & $\Theta(n^{-1/5}) $ & $\Omega(n^{-1/5})$ \\
	& & & & $O(n^{-1/6})$ \\
	\cline{2-5}
	& $r=2$ & $\Theta(n^{-1/2})$ & $\Theta(n^{-1/2}) $ & $\Omega(n^{-1/2})$ \\
	& & & & $O(n^{-1/3})$ \\
	\hline
	\multirow{2}{9em}{Outerplanar graphs (i.e.\ $\mu \leq 2$)} & $r \geq 3$ & $\Theta(1)$ & $\Theta(1)$ & $\Theta(1)$ \\
	\cline{2-5}
	& $r = 2$ & $\Theta(n^{-1/3})$ & $\Theta(n^{-1/3})$ & $\Theta(n^{-1/3})$ \\
	\hline
\end{tabular}
\caption{Examples of the threshold probability for certain minor-closed families and three special properties: being $(r-1)$-degenerate, $r$-choosable and $r$-colorable, for integers $r$.
	\\
	Among the results in this table, results that state the thresholds are equal to $\Theta(1)$ follow from known results in the literature; all other lower bounds of the thresholds in this table follow from our main theorems (Theorems \ref{thm: main critical exponent 1} and \ref{thm:critical exponent lower bound}); all other upper bounds follow from a combination of our tools developed in Section \ref{sec:upper bound} and either known results in the literature or trivial observations.}
	\label{table:cor}
\end{center}
	\end{table}
\end{corollary}

\subsection{Definitions and main questions}

A \emph{graph property} $\P$ is a class of graphs such that $\P$ is invariant under graph automorphisms. 
A graph class $\G$ is \emph{monotone} if every subgraph of a member of $\G$ is in $\G$.  
We remark that a graph property is also a graph class. 
So a graph property $\P$ is monotone if every subgraph of a member of $\P$ is in $\P$.  

Given a monotone graph property $\P$ and an infinite sequence $(G_n)_{n \in \mathbb{N}}$, a function $p^*: \mathbb{N} \to [0,1]$ is a {\it threshold (probability)} for $\P$ and $(G_n)_{n \in \mathbb{N}}$ if  for any slowly growing function $x(n)$: (1) $G_n(p^*(n) x(n)) \notin \P$ \text{a.a.s.};   and (2) $G_n(p^*(n) /x(n)) \in \P$ \text{a.a.s.} 
Thresholds for various graph properties in $\bG(n,p)$ were first observed by Erd\H{o}s and R\'enyi  \cite{ErdosRenyi}, and then generalized to  all monotone set properties and general random set models by Bollob{\'a}s and Thomason \cite{BT-existenceThreshold} (see also  \cite{FKalai-existenceThreshold}). 
In addition, the results in \cite{BT-existenceThreshold} imply the existence of the following more general setting of threshold probabilities for any monotone graph class $\G$. 

\begin{dfn}[Upper threshold $p_\G^\P$] \label{def upper threshold}
Let $\P$ be a monotone graph property and let $\G$ be a monotone graph class.
When $\G$ is an infinite family, we say that a function $p_\G^\P: \mathbb{N} \to [0,1]$ is an {\rm upper threshold} for $\G$ and $\P$ if the following two conditions hold. 
 \begin{enumerate}
	 \item For every sequence $(G_n)_{n \in {\mathbb N}}$ of graphs with $G_n \in \G$ and $\lvert V(G_n) \rvert = n$, and for any function $q: \mathbb{N} \to [0,1]$ with $p_\G^\P(n) / q(n) \to \infty$, the random subgraphs $G_n(q(n))$ are in $\P$ a.a.s. 
	 \item There exists a sequence $(G_n)_{n \in {\mathbb N}}$ of graphs with $G_n \in \G$ and $\lvert V(G_n) \rvert  = n$ such that for any function $q: \mathbb{N} \to [0,1]$ with $q(n)/p_\G^\P \to \infty$, the random subgraphs $G_n(q(n))$ are not in $\P$ a.a.s. 
\end{enumerate}
When $\G$ is finite, a function $p_\G^\P: \mathbb{N} \to [0,1]$ is an {\rm upper threshold} for $\G$ and $\P$ if $p_\G^\P=\Theta(1)$. 
\end{dfn}
In the case when $\G$ consists of a sequence  $(G_n)_{n \in \mathbb{N}}$ where $|V(G_n) | = n$ for each $n \in {\mathbb N}$, the definition for the upper threshold for $\G$ coincides with the aforementioned definition for a threshold for the sequence $(G_n)_{n \in \mathbb{N}}$. 
For simplicity, we call the upper threshold the threshold.
Note that $p_\G^\P$ is unique up to multiplying by positive constant factors.
So it is sufficient to determine the order of $p_\G^\P$.

In comparison to numerous results on thresholds for various graph properties on $\bG(n,p)$ (see e.g. \cite{B-RGbook,FK-RGbook, JLR-RGbook}), only few results are known when the host graphs are other graphs.
In addition, these few known results tend to depend on the high density, such as in \cite{GNS-threshold}, or special geometric or spectral features, such as being expanders \cite{Alon-percolation} or being an $n$-dimensional cube \cite{CCHSS-phasetransition-cube}. 

In this paper, we complement the knowledge in this direction by considering the case when the host graphs belong to minor-closed families.
Minor-closed families are classes of sparse graphs with a pure combinatorial property that generalizes a number of topological properties.

A graph $H$ is a {\it minor} of another graph $G$ if $H$ is isomorphic to a graph that can be obtained from a subgraph of $G$ by contracting edges.
A family $\G$ of graphs is {\it minor-closed} if every minor of any member of $\G$ belongs to $\G$. 
A minor-closed family is {\it proper} if it does not contain all graphs.  
Typical examples of minor-closed families include the class of planar graphs (and more generally, the class of graphs embeddable in a fixed surface), the class of linkless embeddable graphs, and the class of knotless embeddable graphs.

Inspired by coloring problems such as Question \ref{eq1}, the key property studied in this paper is the property of being $r$-degenerate, for any fixed integer $r$, defined below.

\begin{dfn}
Let $r$ be a nonnegative integer, and let $G$ be a graph.
Then
	\begin{itemize}
		\item $G$ is \emph{$r$-degenerate} if every subgraph of $G$ contains a vertex of degree at most $r$;
		\item $G$ is \emph{$r$-choosable} if for every list-assignment $(L_v: v \in V(G))$ with $\lvert L_v \rvert \geq r$, there exists a function $c$ that maps each vertex $v \in V(G)$ to an element of $L_v$ such that $c(x) \neq c(y)$ for any edge $xy$ of $G$. 
	\end{itemize}
\end{dfn}

It is well-known that any $r$-degenerate graph is $(r+1)$-choosable and $(r+1)$-colorable by a simple greedy algorithm. 
Note that a graph is not $r$-degenerate if and only if it contains a subgraph of minimum degree at least $r+1$.
So being non-$r$-degenerate is equivalent to having an ``$(r+1)$-core'', which is an object whose size has been extensively studied in random graphs (see \cite{Luczak-3-col,PSW-core} for example).

In this paper, we consider the following properties.

\begin{dfn}\label{dfn:prop}
Let $r$ be a positive integer.
We define
	\begin{itemize}
	 	\item $\D_r$ to be the property of being $(r-1)$-degenerate,
	 	\item $\chi_r$ to be the property of being $r$-colorable, 
	 	\item $\chi_r^\ell$ to be the property of being $r$-choosable, and
	 	\item $\R_r$ to be the property of having no $r$-regular subgraphs.
	\end{itemize}
\end{dfn}

\begin{question}\label{ques:deg_gen} 
For any integer $r \geq 2$ and proper minor-closed family $\G$, what is the threshold probability $p_\G^{\P}$, where $\P \in \{\D_r,\chi^\ell_r,\chi_r,\R_r\}$?
\end{question}

It is easy to see that $\D_r, \chi_r,\chi_r^\ell$ and $\R_r$ are monotone properties, $\D_r \subseteq \chi_r^\ell \subseteq \chi_r$ and $\D_r \subseteq \R_r$.
So results for $\D_r$ is a crucial part for our results on the other three properties.

For every graph $H$, let $\M(H)$ be the set of $H$-minor free graphs. 
Note that $\M(H)$ is a proper minor-closed family for any fixed graph $H$.
And for every proper minor-closed family $\G$, there exists a graph $H'$ such that $H' \not \in \G$, so $\G \subseteq \M(H')$.
This simple observation together with the following trivial proposition show that the heart of Question \ref{ques:deg_gen} is the special case stated in Question \ref{ques:deg}.

\begin{proposition}\label{prop:minor-closed-and-MH}
For any proper minor-closed family $\G$ and graph $H \not \in \G$, $p_\G^\P=\Omega(p_{\M(H)}^\P)$ for every monotone property $\P$.
\end{proposition}

\begin{question}\label{ques:deg} 
For any graph $H$ and integer $r \geq 2$, what is the threshold probability $p_{\M(H)}^{\D_r}$? 
\end{question}

\paragraph{Remark.} 
Question \ref{ques:deg} is expected to be very challenging: determining whether the threshold is $\Theta(1)$ or not for all $r$ and $H$ implies solutions of long-standing open questions in extremal graph theory about determining the degeneracy function or giving a constant approximation of the extremal function for all $H$. 
The \emph{degeneracy function} $d_H(n)$ is the minimum $d$ such that any $H$-minor free graph on $n$ vertices is $d$-degenerate. 
A simple observation shows that for any fixed connected graph $H$, determining whether the answer to Question \ref{ques:deg} is $\Theta(1)$ for every $r \geq 2$ is equivalent to determining $\lim_{n \rightarrow \infty}d_H(n)$, denoted by $d^*_H$. 
(See Proposition \ref{equiv_thr_den} for the precise description and a proof.\footnote{Note that $d_H$ is a non-decreasing function, as being $(r-1)$-degenerate remains when adding isolated vertices. 
In addition, a result of Mader \cite{Mader} implies that $d_H(n)$ has a constant (only depending on $H$) upper bound for every $n \in {\mathbb N}$.
Therefore $d^*_H$ is well-defined, which equals $\sup_{n \in {\mathbb N}}d_H(n)$.})
The limit $d^*_H$ is closely related to  another challenging function, the  \emph{extremal function} $f_H(n)$, which is the maximum possible number of edges in an $H$-minor free graph on $n$ vertices. 
Proposition \ref{2approx}  shows that $f_H^* \leq d^*_H \leq 2f_H^*$, where $f^*_H = \sup_{n \in {\mathbb N}} \frac{f_H(n)}{n}$.\footnote{Mader \cite{Mader} proved that $f^*_H$ exists.} 
Despite having been extensively studied, even approximating $f_H^*$ within a factor of $2$ is not known for general sparse graphs (see \cite{Thomason-survey}).
We remark that a combination of recent results \cite{nrtw,rw,rw2,Thomason-Wales} gives an approximation with a factor $\frac{0.319+\epsilon}{0.319-\epsilon}$ for almost every graph $H$ of average degree at least a function of $\epsilon$ (so a density condition for $H$ is still required), where $0<\epsilon<1$; and very recently, some results about the extremal function were obtained for the case when $H$ satisfies certain sparsity assumptions about its expansion, such as \cite{hkl,hnw}.

Similarly, the analogous problems for $r$-colorability or $r$-choosability correspond to Hadwiger's conjecture and its variants about the chromatic number or choice number of graphs in minor-closed families, which are other major open problems in graph theory.

In addition, even though the thresholds for $\D_r,\chi_r$ and $\chi_r^\ell$ are well-studied in $\bG(n,p)$ (all of which are $\Theta(n^{-1})$), those techniques do not work for $H$-minor free graphs, since $H$-minor free graphs are sparse and lacks symmetry.

\subsection{Our results}\label{subsec:ourresult}

Our main Theorem \ref{thm: main critical exponent 1} answers Question \ref{ques:deg} for a large family of pairs $(r,H)$. 
The other main Theorem \ref{thm:critical exponent lower bound} provides lower bounds for the rest of the pairs $(r, H)$. 
Those results are for degeneracy and choosability (i.e.\ the properties $\D_r$ and $\chi_r^\ell$).
They yield results for colorability and existence of regular subgraphs (i.e.\ the properties $\chi_r$ and $\R_r$, see Corollary \ref{cor:many_intro} and Theorems \ref{thm:critical exponent regular intro} and \ref{thm:critical exponent colorable intro}), but we do not put any effort to optimize results for these two properties in this paper.
The results for these four properties also generalize to arbitrary proper minor-closed family by Proposition \ref{prop:minor-closed-and-MH}. 

It turns out that the threshold for $H$-minor free graphs and degeneracy is closely related to the {\it vertex cover} of $H$, defined below.

\begin{dfn} 
A {\rm vertex-cover} of a graph $H$ is a subset $S$ of $V(H)$ such that $H-S$ is edgeless. 
We denote the minimum size of a vertex-cover of a graph $H$ by $\tau(H)$. 
\end{dfn}

We also need the following standard definition of a {\it join} of two graphs. 
\begin{dfn}
For any graphs $G,G'$ and positive integer $t$, we define $tG$ to be the disjoint union of $t$ copies of $G$, and define $G \vee G'$ to be the graph that is obtained from a disjoint union of $G$ and $G'$ by adding all edges with one end in $V(G)$ and one end in $V(G')$.
\end{dfn}

Clearly, $\tau(H)=0$ if and only if $H$ has no edge. 
Hence if $\tau(H)=0$, then no $H$-minor free graph has more than $\lvert V(H) \rvert$ vertices, so the threshold $p_{\M(H)}^\P$ is $\Theta(1)$ for any property $\P$.
Therefore, we are only interested in graphs $H$ with $\tau(H) \geq 1$.

The following theorem determines the threshold for $\M(H)$ and for degeneracy $\D_r$ and choosability $\chi_r^\ell$ for a large class of pairs $(r,H)$, including the case $\tau(H) > r$ or the case that $H$ has minimum degree at least $r$. 

\begin{theorem} \label{thm: main critical exponent 1}
Let $r \geq 2$ be an integer and $H$ a graph (not necessarily connected).  
Let $\P \in \{\D_r,\chi_r^\ell\}$. 
Then $$p_{\M(H)}^{\P}=\Theta(n^{-\frac{1}{q_H}})$$ in each of the following cases, where $q_H$ is an integer defined as follows.
	\begin{enumerate}
		\item If $\tau(H) \geq r+1$, then 
		$q_H=r$.
		\item If $1 \leq \tau(H) \leq r$ and $H$ is not a subgraph of $K_{\tau(H)-1} \vee tK_{r+2-\tau(H)}$ for any positive integer $t$, then $q_H=(r+2-\tau(H))r-{r+2-\tau(H)\choose 2}$.
		\item If $1 \leq \tau(H) \leq r$, $H$ has minimum degree at least $r$, and $H$ is not a subgraph of $K_{r-1} \vee tK_{2}$ for any positive integer $t$, then $q_H=2r-1$.
		\item If $1 \leq \tau(H) \leq r$, $H$ has minimum degree at least $r$, $H$ is a subgraph of $K_{r-1} \vee tK_{2}$ for some positive integer $t$, and $H \not \in \{K_2,K_3,K_4\}$, then $q_H = 3r-3$.
	\end{enumerate}
Furthermore, $p_{\M(H)}^{\P}=\Theta(1)$ if either
	\begin{itemize}
		\item $H = K_{r+1}$ and $r \leq 3$, or
		\item $H$ has at most one component on at least two vertices and every component of $H$ is an isolated vertex or a star of maximum degree at most $r$.
	\end{itemize}
\end{theorem}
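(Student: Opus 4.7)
The plan is to prove both directions of the threshold in each case: a lower-bound construction, exhibiting a sequence of $H$-minor free graphs that fails to be $(r-1)$-degenerate a.a.s.\ when $p \gg n^{-1/q_H}$, and a matching upper bound, showing that every sequence of $H$-minor free graphs becomes $(r-1)$-degenerate a.a.s.\ when $p \ll n^{-1/q_H}$. For the list-coloring version, the containment $\D_r \subseteq \chi_r^\ell$ (greedy coloring of any $(r-1)$-degenerate graph) gives the upper bound for free, and each lower-bound construction should be chosen so that once a min-degree-$r$ subgraph survives in $G_n(p)$, it also contains an obstruction to $r$-choosability (a copy of $K_{r+1}$, or a small bipartite graph with list chromatic number exceeding $r$).

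For the lower bound I would follow a uniform ``base plus gadgets'' template. Fix a small graph $B$ of constant size and attach $t = \Theta(n)$ disjoint gadgets $\Gamma_1,\dots,\Gamma_t$, each joined to $B$ in such a way that $B \cup \Gamma_i$ has minimum degree exactly $r$; the key invariant will be that each ``$B + \Gamma_i$'' uses exactly $q_H$ edges outside of $B$ itself. In Case 2 I would take $B = K_{\tau(H)-1}$ and $\Gamma_i \cong K_{r+2-\tau(H)}$ fully joined to $B$, which is $H$-minor free because $H$ is assumed not to be a subgraph of $K_{\tau(H)-1} \vee t K_{r+2-\tau(H)}$ and a branch-set analysis reduces minor-containment here to subgraph-containment. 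In Case 1 the base would be $K_r$ and each gadget a single apex vertex joined to $B$; in Cases 3 and 4 the base is again $K_{r-1}$, with matching or short-cycle gadgets chosen to exploit the minimum-degree hypothesis on $H$ and to avoid $H$ as a minor. In each case the expected number of gadgets that survive together with all of their cross edges to $B$ is $\Theta(t\, p^{q_H})$, and a second-moment computation shows that once this quantity diverges, enough gadgets survive simultaneously to promote every base vertex to degree $\geq r$, producing the desired min-degree-$r$ subgraph.

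For the upper bound I would apply the first-moment method to ``minimal'' min-degree-$r$ subgraphs. Any $J$ with $\delta(J) \geq r$ contains a vertex-minimal such $J'$, and $J'$ must have at least $rv/2$ edges on $v$ vertices. For an $H$-minor free host $G_n$, Mader's theorem gives $|E(G_n)| = O(n)$, and the expected number of copies of any fixed such $J'$ in $G_n(p)$ is $O(n^{|V(J')|} p^{|E(J')|})$ up to symmetry. The decisive step is to show that in each of Cases~(1)--(4) every minimal min-degree-$r$ subgraph of an $H$-minor free graph has at least $q_H$ edges beyond a constant-size ``core'', because every min-degree-$r$ graph cheaper than this threshold necessarily contains $H$ as a minor and therefore cannot embed into $G_n$. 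Summing over the finitely many isomorphism classes of such $J'$ and invoking the sparsity bound gives $\E[\#\{\text{min-degree-}r\text{ subgraphs in }G_n(p)\}] = o(1)$ when $p = o(n^{-1/q_H})$, which is exactly the required conclusion.

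The main obstacle is the structural classification behind the upper bound: showing that, under each of the case hypotheses, the minimal min-degree-$r$ $H$-minor free graphs are ``expensive'' in edges. This requires exploiting the assumption on $\tau(H)$ (Cases~1--2) or on $\delta(H) \geq r$ (Cases~3--4) to rule out more efficient obstructions, and in Case~4 the further structural restriction $H \subseteq K_{r-1} \vee tK_2$ forces a more delicate choice of gadget. Verifying $H$-minor freeness of the lower-bound constructions is also delicate, since subgraph-freeness is strictly weaker than minor-freeness, and a branch-set case analysis tailored to the shape of $H$ is needed each time. The ``Furthermore'' cases follow from separate, simpler arguments: if every component of $H$ is either an isolated vertex or a star of maximum degree at most $r$, then $H$-minor free graphs are highly restricted (bounded maximum degree together with very few large components), so $G_n(p) \in \D_r$ at any bounded $p$, giving threshold $\Theta(1)$; when $H = K_{r+1}$ with $r \leq 3$, classical results (e.g.\ $K_4$-minor free graphs are series-parallel and hence $2$-degenerate) make the threshold $\Theta(1)$ immediately.
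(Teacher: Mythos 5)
Your lower-bound constructions (the ``base plus gadgets'' template with second-moment) are essentially the paper's Section~3: the paper uses $I_{r-w}\vee tK_{w+1}$, complete bipartite graphs $K_{r,n-r}$, and a graph $L_t$ built from $I_{r-1}\vee K_3$ minus a $3$-matching as gadget families, and Lemma~\ref{prob general} is precisely the Chebyshev second-moment lemma you sketch. The $\Theta(1)$ ``Furthermore'' cases are also handled essentially as you describe (via Dirac's theorem for $K_4$, trivially for $K_2,K_3$, and bounded maximum degree for the star cases). So that half of your plan is fine.

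The upper bound (showing $G_n(p)\in\D_r$ a.a.s.\ when $p=o(n^{-1/q_H})$) has a genuine gap, and it is exactly the bottleneck the paper flags in Section~2.2 before introducing its main machinery. Your plan is to first-moment over ``minimal'' min-degree-$r$ subgraphs $J'$ of $G_n$ and then sum ``over the finitely many isomorphism classes of such $J'$.'' But there are \emph{not} finitely many isomorphism classes, and the subgraphs need not be of bounded size: even in planar graphs, wheels of arbitrary order are vertex-minimal subgraphs of minimum degree $3$, and more generally an $H$-minor-free host on $n$ vertices can contain $2^{\Theta(n)}$ distinct subgraphs of minimum degree~$r$. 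A naive union over all of them only gives a bound of the form $2^{O(n)}p^{r(r+1)/2}$, which forces $p$ to be exponentially small. Adding the observation that every such $J'$ has at least $q_H$ edges does not rescue the count. The paper's way around this is the notion of a $(c,q,r)$-good signature collection (Definition and Lemma~\ref{lem:coverCandp}): rather than hitting each min-degree-$r$ subgraph individually, one produces a collection of at most $c\,|V(G_n)|$ fixed $q$-edge sets such that every min-degree-$r$ subgraph contains at least one of them; the union bound is then over only $O(n)$ events of probability $p^q$. Constructing these collections with the right $q=q_H$ is the bulk of the paper (Lemma~\ref{sufficient good collection} and the peeling/``span'' machinery of Sections~4--5, via Lemmas~\ref{weak general collection} and \ref{minor collection}). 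Without some such grouping idea, the first-moment step as you have written it does not close.

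A secondary, smaller point: the fact that a min-degree-$r$ graph with fewer than $q_H$ ``extra'' edges ``necessarily contains $H$ as a minor'' is a useful heuristic for why $q_H$ is the right exponent, but it constrains only the cheap subgraphs; nothing in your outline controls the count of expensive ones, which is where all the work lies.
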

Note that Statements 2 and 3 of Theorem \ref{thm: main critical exponent 1} are consistent since if $\tau(H) \leq r$ and $\delta(H) \geq r$, then $\tau(H)=r$. 

It is clear that for any fixed nonnegative integer $r$, the graphs $H$ in which the threshold is not determined in Theorem \ref{thm: main critical exponent 1} belong to the set
$$\mathcal{H}_r:=\{ H: 1 \leq \tau(H) \leq r \text{ and }  H \subseteq K_{\tau(H)-1} \vee t^*K_{r+2-\tau(H)} \text{ for some positive integer } t^*\}.$$
Theorem \ref{thm: main critical exponent 1} also shows\footnote{When $\tau(H)=1$, $H$ is a graph that is a disjoint union of $K_{1,s}$ for some positive integer $s$ and isolated vertices. Since $H \in {\mathcal H}_r$ and $\tau(H)=1$, $H$ is a subgraph of $t^*K_{r+1}$ for some positive integer $t^*$, so $s \leq r$, and hence every component of $H$ is either an isolated vertex or a star of maximum degree at most $r$.} that $p_{\M(H)}^{\D_r}=\Theta(1)$ if $H$ is a graph in ${\mathcal H}_r$ with $\tau(H)=1$.
Therefore, we have the following corollary. 

\begin{corollary} 
The thresholds for $\D_r$ and $\chi_r^\ell$ are determined by Theorem \ref{thm: main critical exponent 1} unless $H \in {\mathcal H}_r$ and $\tau(H) \geq 2$. 
\end{corollary}

We remark that the number of cases not covered by Theorem \ref{thm: main critical exponent 1} is not large.
Every graph in ${\mathcal H}_r$ has the property that deleting at most $\tau(H)-1$ vertices results in a graph whose every component has at most $r+2-\tau(H) \leq r+1$ vertices.
Even though every graph $W$ is a subgraph of $K_{\tau(W)} \vee \lvert V(W) \rvert K_1$, which looks close to the definition of the graphs in ${\mathcal H}_r$, there is no control for the maximum degree of the remaining graph if we only delete $\tau(W)-1$ vertices from $W$.

We provide a lower bound for the thresholds in Theorem \ref{thm:critical exponent lower bound} for the cases not covered by Theorem \ref{thm: main critical exponent 1}. 

The lower bound in Theorem \ref{thm:critical exponent lower bound} might look artificial at first glance, but it naturally arises from constructions that establish upper bounds. 
Those upper bounds come from gluing several copies of the same graph.
We explain the intuition before we provide the formal description.
We use the following notation. 

\begin{dfn} \label{dfn:wedge} 
Let $G$ be a graph, and let $Z=\{z_1,z_2,...,z_{\lvert Z \rvert}\}$ be a subset of $V(G)$.
For any positive integer $k$, let $G \wedge_k Z$ be the graph obtained from a union of $k$ disjoint copies of $G$ by identifying, for each $i$ with $1 \leq i \leq \lvert Z \rvert$, the $k$ copies of $z_i$. 
\end{dfn}

For example, if $G$ is a star with three leaves and $Z$ is the set of the leaves, then $G \wedge_k Z$ is $K_{k,3}$.

Note that if every vertex in $V(G)-Z$ has degree at least $r$ in $G$, then $G \wedge_k Z$ is not $r$-degenerate when $k$ is sufficiently large, and it can be used to prove upper bounds for the thresholds.
This motivates the following notions.

\begin{dfn}\label{dfn:FGHR}
For graphs $G$ and $F_0$ and a nonnegative integer $r$, define ${\mathcal F}(G,F_0,r)$ to be the set consisting of the graphs that can be obtained from a disjoint union of $G$ and $F_0$ by adding edges between $V(G)$ and $V(F_0)$ such that every vertex in $V(F_0)$ has degree at least $r$.

For a graph $F$ in ${\mathcal F}(G,F_0,r)$, the {\rm type} of $F$ is the number of edges of $F$ incident with $V(F_0)$, and we call $V(G)$ the {\rm heart} of $F$.
\end{dfn}

Note that every graph in ${\mathcal F}(G,F_0,r)$ has type at least $r$. Figure \ref{fig:pedal}(a) is an example of some $F \in \mathcal{F}(I_2, K_3, 3)$ of type $6$.\footnote{For every nonnegative integer $t$, we denote the edgeless graph on $t$ vertices by $I_t$, where $I_0$ is the empty graph with no vertices. Note $I_t = tK_1$. We will use the notation $I_t$ instead of $tK_1$ for simplicity because the description for $t$ can be complicated.}
Figure \ref{fig:pedal}(b) is an example of $F \wedge_t V(I_2)$ for some $F \in \mathcal{F}(I_2, K_3, 3)$ of type $6$ and $t = 4$.

  \begin{figure}[h]
    \begin{subfigure}[t]{0.4\textwidth}
      \includegraphics[trim= 20pt 60pt 0pt 0pt, width=\textwidth, scale=0.3]{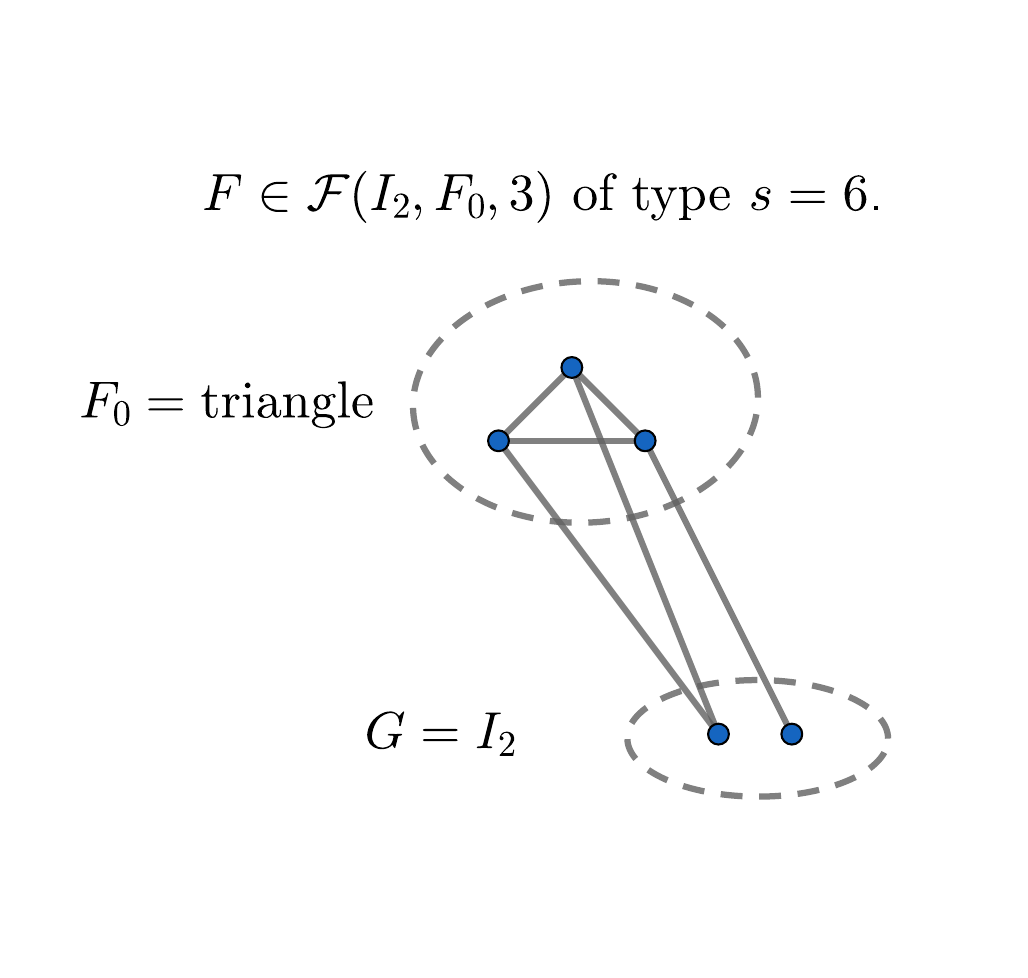}
      \caption{\footnotesize{$F_0$ is a triangle $K_3$. Each vertex of $F_0$ has degree at least $r=3$ in $F$. There are in total $6$ edges incident with vertices in $F_0$. Thus $F \in \mathcal{F}(I_2, F_0, 3)$ and is of type $6$. The vertex-set of $I_2$ is the heart of $F$.}}
    \end{subfigure}
    \hfill
    \begin{subfigure}[t]{0.35\textwidth}
      \includegraphics[trim= 0pt 0pt 0pt 0pt, width=\textwidth, scale=0.3]{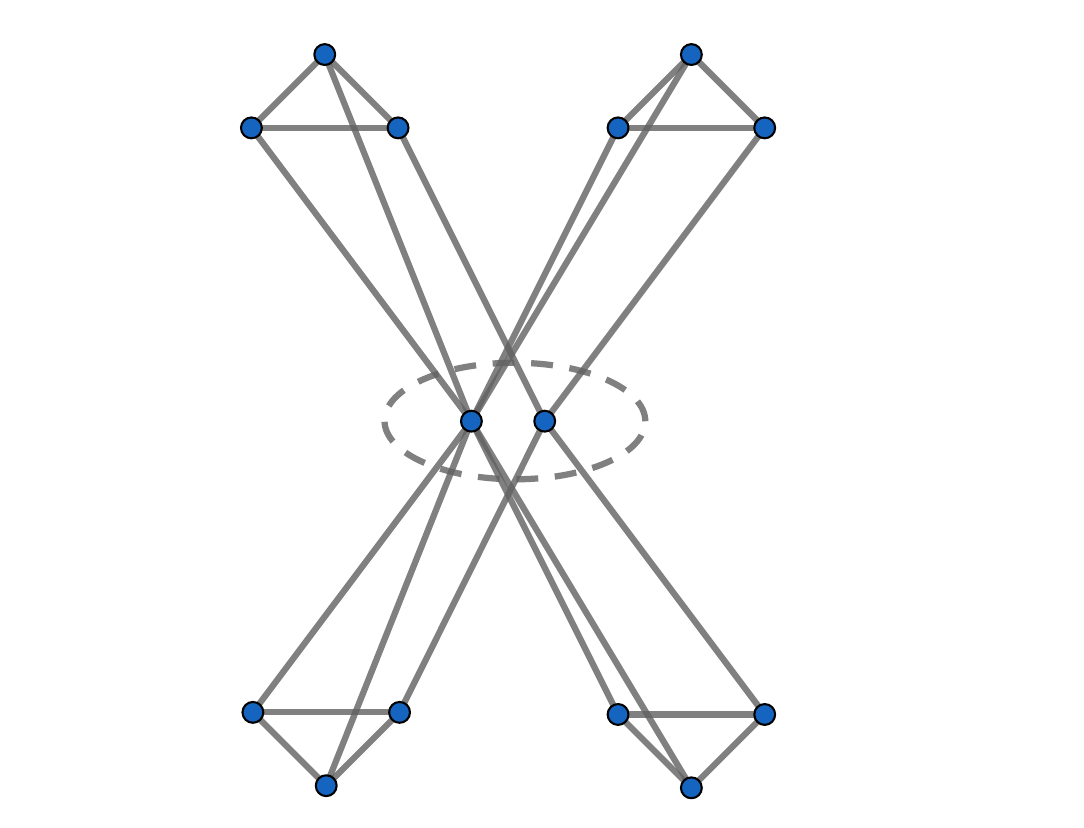}
      \caption{\footnotesize{$F \wedge_4 Z$, where $Z$ is the heart of $F$.}}
    \end{subfigure}
  \caption{An example of a graph $F \in \mathcal{F}(I_2, F_0, 3)$ of type $6$ and $F \wedge_4 Z$ for some set $Z$.}  \label{fig:pedal}
    \end{figure}

The type defined in Definition \ref{dfn:FGHR} provides an upper bound for the thresholds:
Assume that $F$ is a graph in ${\mathcal F}(I,F_0,r)$ with type $q$ for some edgeless graph $I$.
Consider $G_k:=F \wedge_k V(I)$ for sufficiently large $k$.
Note that every edge of $G_k$ is incident with a copy of $F_0$ since $I$ is edgeless.
If a subgraph of $G_k$ is $(r-1)$-degenerate, then for each of almost all copies $C$ of $F$ in $G_k$, at least one edge in $C$ cannot appear in this $(r-1)$-degenerate subgraph.
So a simple probabilistic argument shows that the random subgraph $G_k(p)$ of $G_k$ is not $(r-1)$-degenerate if the probability $p$ is $\omega(|V(G_k)|^{-1/q})$.
Hence if the graph class $\G$ contains such $G_k=F \wedge_k V(I)$ for infinitely many integers $k$, then the type of $F$ gives an upper bound for the threshold for $\G$ and $\D_r$. 
Therefore, we would like to know the smallest type of such graphs $F$.
And this smallest type is essentially $s_r(H)+1$, where $s_r(H)$ is the number in the following definition, except that we have to restrict $s_r(H) \leq {r+1 \choose 2}$ for some technical reasons.

\begin{dfn} \label{def:sr}
For any graph $H$ and integer $r \geq 2$, let $s_r(H)$ be the largest integer $s$ with $0 \leq s \leq {r+1 \choose 2}$ such that for every integer $s'$ with $0 \leq s' \leq s$, every connected graph $F_0$ and every graph $F \in {\mathcal F}(I_{\tau(H)-1},F_0,r)$ of type $s'$, $H$ is a minor of $F \wedge_t I$ for some positive integer $t$, where $I$ is the heart of $F$. 
\end{dfn}

Note that $s_r(H) \geq r-1$, since there exists no connected graph $F_0$ such that there exists a graph in ${\mathcal F}(I_{\tau(H)-1},F_0,r)$ of type at most $r-1$.

As discussed above, $s_r(H)+1$ gives an upper bound for the threshold, subject to an extra requirement that $s_r(H) \leq {r+1 \choose 2}$.
Our result for lower bounds (Theorem \ref{thm:critical exponent lower bound}) essentially matches this upper bound $s_r(H)+1$ and the upper bounds appeared in Theorem \ref{thm: main critical exponent 1}, except that we also have to consider ${r+1 \choose 2}$ due to technical reasons.

\begin{theorem}\label{thm:critical exponent lower bound}
Let $r \geq 2$ be an integer and $H \in {\mathcal H}_r$. 
Let $\P \in \{\chi_r^\ell,\D_r\}$. 
	\begin{enumerate}
		\item If $\tau(H) \geq 2$, then $s_r(H) \geq (r-\tau(H)+2)r-{r-\tau(H)+2 \choose 2}-1 = {r+1 \choose 2} - {\tau(H)-1 \choose 2} -1$.
		\item If $2 \leq \tau(H) \leq r$, then $p_{\M(H)}^{\P}=\Omega(n^{-1/q_H})$, where $q_H=\min\{s_r(H)+1,{r+1 \choose 2}\}$. 
	\end{enumerate}
\end{theorem}

Simple corollaries of Theorems \ref{thm: main critical exponent 1} and \ref{thm:critical exponent lower bound} for many extensively studied minor-closed families are included in Corollary \ref{cor:many_intro}.
We remark that it is far from a complete list of corollaries that can be derived from Theorems \ref{thm: main critical exponent 1} and \ref{thm:critical exponent lower bound}.

We will sketch the proof ideas in the next section.

\section{Proof ideas and organization}

\subsection{Notations}
In this paper, graphs are simple.
Let $G$ be a graph and $X$ a subset of $V(G)$.
We denote the subgraph of $G$ induced by $X$ by $G[X]$.
We define $N_G(X)=\{v \in V(G)-X: v$ is adjacent in $G$ to some vertex in $X\}$, and define $N_G[X]=N_G(X) \cup X$.
For any vertex $v$, $G-v, N_G(v)$ and $N_G[v]$ are defined to be $G[V(G)-\{v\}]$, $N_G(\{v\})$ and $N_G[\{v\}]$, respectively.
The {\it degree} of a vertex is the number of edges incident with it.
The minimum degree of $G$ is denoted by $\delta(G)$. 
The {\it length} of a path is the number of its edges.
The {\it distance} of two vertices in $G$ is the minimum length of a path in $G$ connecting these two vertices; the distance is infinity if no such path exists.

For every real number $k$, we define $[k]$ to be the set $\{x \in {\mathbb Z}: 1 \leq x \leq k\}$. 
We use $\mathbb{N}$ to denote the set of all positive integers, which does not include $0$. 

\subsection{Proof ideas}
To establish an upper bound $f$ of the thresholds in Theorem \ref{thm: main critical exponent 1}, it suffices to construct a sequence $(G_n: n \in \mathbb{N})$ of graphs in $\M(H)$ such that $\lim_{n \rightarrow \infty}\Pr(G_n(p(n)) \in \P)=0$ for every function $p$ with $f(n)/ p(n) \to 0$, as shown in Section \ref{sec:upper bound}.
These constructions use notions in Definitions \ref{dfn:wedge} and \ref{dfn:FGHR}.
Roughly speaking, the graphs $G_n$ in the construction are altered from the complete bipartite graphs such that they have minimum degree at least $r$ in various ways.

Proving the lower bounds in Theorems \ref{thm: main critical exponent 1} and \ref{thm:critical exponent lower bound} are much more difficult, and most of the paper is dedicated to it.
It suffices to prove lower bounds for $p_{\M(H)}^{\D_r}$ and then use the following trivial observation (Proposition \ref{relation three properties}) for other properties. 

\begin{proposition} \label{relation three properties}
For any positive integer $r$ and graph class $\G$, the threshold for $\D_r$ is upper bounded by each of the thresholds for the properties $\chi_r^\ell, \chi_r$ and $\R_r$.
\end{proposition}

If $G(p)$ is $(r-1)$-degenerate, every subgraph $R$ of $G$ with $\delta(R) \geq r$ has to have some edge disappear in $G(p)$. 
Hence the lower bound of the threshold for $\D_r$ comes from a value $p$ that ensures that every subgraph $R$ of $G$ with $\delta(R) \geq r$ has a missing edge in $G(p)$. 
If one ignores the interplay between different subgraphs of $G$ of minimum degree at least $r$, then since there are possibly $O(2^{|E(G)|}) = O(2^{O(n)})$ such subgraphs, there is no hope to obtain a lower bound of the form $O(n^{-1/q})$ (for some $q>0$) which is the corresponding upper bound.

The strategy to obtain a lower bound of the form $O(n^{-1/q})$ is to find a small set of ``signatures'' such that every subgraph of minimum degree at least $r$ contains such a signature, so as long as $G(p)$ misses an edge for each signature, $G(p)$ has no subgraph of minimum degree at least $r$.
The following definitions and lemma formalize this idea. 

\begin{dfn}\label{def:goodsig}
For any real number $c$ and nonnegative integers $q$ and $r$, a {\rm $(c,q,r)$-good signature collection} for a graph $G$ is a collection $\C$ of subsets of $E(G)$ with the following properties.
\begin{enumerate}
\item Each member of $\C$ has exactly $q$ edges. 
\item $|\C| \leq c |V(G)|$. 
\item For every subgraph of $G$ of minimum degree at least $r$, its edge-set contains some member in $\C$. 
\end{enumerate}
\end{dfn}

\begin{dfn}\label{def:goodsig2}
For a given graph class $\G$ and nonnegative integers $q$ and $r$, we say $\G$ has {\rm $(q,r)$-good signature collections} if there is a constant $c = c(\G)$ such that for every graph $G$ in $\G$, there is a $(c,q,r)$-good signature collection for $G$.
\end{dfn}

The following lemma shows that the existence of $(q,r)$-good signature collections for $\G$ provides a lower bound on the threshold probability in terms of $q$.

\begin{lemma}\label{lem:coverCandp}
Let $\G$ be a class of graphs and $q,r$ be positive integers. 
If $\G$ has $(q,r)$-good signature collections,
then $p_\G^{\D_r}=\Omega(n^{-1/q})$.
\end{lemma}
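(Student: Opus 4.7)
The plan is to apply the first moment method with the signature collection serving as the essential compression device. Fix an arbitrary sequence $(G_n)_{n\in\mathbb{N}}$ with $G_n \in \G$ and $\lvert V(G_n)\rvert = n$, and let $c = c(\G)$ be the constant supplied by the hypothesis. For each $n$ choose a $(c,q,r)$-good signature collection $\C_n$ for $G_n$. Heuristically, the signatures replace the a priori $2^{\Theta(n)}$ subgraphs of minimum degree at least $r$ (discussed earlier in the excerpt as the bottleneck of the naive bound) by a family of only $O(n)$ edge sets that nevertheless ``certify'' every such subgraph via Property 3.

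The core estimate is then a one-line union bound. Note that $G_n(p) \notin \D_r$ occurs exactly when $G_n(p)$ contains a subgraph of minimum degree at least $r$, and any such subgraph must, by Property 3, contain all edges of some $M \in \C_n$. Using Property 1 (each $M$ has exactly $q$ edges), independence of the edge indicators in $G_n(p)$, and Property 2, I would obtain
\[
\Pr\bigl(G_n(p(n)) \notin \D_r\bigr) \;\leq\; \sum_{M \in \C_n} \Pr\bigl(M \subseteq E(G_n(p(n)))\bigr) \;=\; |\C_n|\cdot p(n)^q \;\leq\; c\,n\,p(n)^q.
\]
For any function $p$ with $p(n) = o(n^{-1/q})$, the right-hand side is $o(1)$, and hence $G_n(p(n)) \in \D_r$ asymptotically almost surely, for every sequence $(G_n)$ in $\G$.

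To convert this into the claimed bound $p_\G^{\D_r} = \Omega(n^{-1/q})$ under Definition~\ref{def upper threshold}, I would argue by contradiction. Suppose $p_\G^{\D_r}(n) = o(n^{-1/q})$; writing $p^*(n) = p_\G^{\D_r}(n)$, define the intermediate scale $q_0(n) := \sqrt{p^*(n)\cdot n^{-1/q}}$. By construction $q_0(n)/p^*(n) \to \infty$ and $q_0(n) = o(n^{-1/q})$. Condition~(2) of Definition~\ref{def upper threshold} applied to $q_0$ yields a sequence $(G_n)$ in $\G$ for which $G_n(q_0(n)) \notin \D_r$ a.a.s., contradicting the previous paragraph which, since $q_0 = o(n^{-1/q})$, forces $G_n(q_0(n)) \in \D_r$ a.a.s.\ for this very sequence.

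I do not anticipate any serious obstacle inside this lemma: it is essentially a clean first-moment calculation, and the only subtle point is the choice of an intermediate scale $q_0$ to leverage the two-sided nature of the upper threshold. The genuinely difficult content lies elsewhere in the paper, namely in exhibiting, for each proper minor-closed family $\G$ and each relevant exponent $q$, an explicit construction of $(q,r)$-good signature collections of size $O(\lvert V(G)\rvert)$ whose members certify every dense subgraph; this is the step that the present lemma is designed to package.
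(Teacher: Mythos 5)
Your core argument --- bounding $\Pr\!\bigl(G_n(p(n)) \notin \D_r\bigr) \leq \lvert \C_n\rvert\,p(n)^q \leq c\,n\,p(n)^q$ via a union bound over the signature collection and observing that this tends to $0$ whenever $p(n) = o(n^{-1/q})$, for every sequence $(G_n)$ in $\G$ --- is precisely the paper's proof. The closing contradiction argument is an extra step the paper does not take (the a.a.s.\ statement for arbitrary sequences is treated as directly establishing the $\Omega$ bound under Definition~\ref{def upper threshold}), and while it is harmless it is also formally incomplete in the usual pedantic sense, since ruling out $p_\G^{\D_r} = o(n^{-1/q})$ is slightly weaker than establishing $p_\G^{\D_r} = \Omega(n^{-1/q})$.
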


\begin{proof}
Let $p^*: \mathbb{N} \rightarrow [0,1]$ be the function such that $p^*(n) = n^{-1/q}$ for every $n \in {\mathbb N}$. 
Let $p: \mathbb{N} \rightarrow [0,1]$ be a function with $\lim_{n \rightarrow \infty}p(n)/p^*(n)=0$. 
Let $(G_n)_{n \in {\mathbb N}}$ be a sequence of graphs in $\G$ such that $\lvert V(G_n) \rvert=n$ for every $n \in {\mathbb N}$.
To show $p_\G^{\D_r}=\Omega(n^{-1/q})$, it suffices to show that $\lim_{n\rightarrow \infty} \Pr(G_n(p(n)) \in \D_r) = 1$. 

Since $\G$ has $(q,r)$-good signature collections, there exists a constant $c$ such that for any integer $n$, there exists a $(c,q,r)$-good collection $\C_n$ for $G_n$. 
For each $T \in \C_n$, since $\lvert T \rvert=q$, $\Pr(T \subseteq E(G_n(p(n))))=(p(n))^q$. 
Since for each subgraph $R$ of $G_n$ with $\delta(R) \geq r$, there exists $T \in \C_n$ with $T \subseteq E(R)$, so the probability that $G_n(p(n))$ contains a subgraph of minimum degree at least $r$ is at most the probability that some member of $\C_n$ is a subset of $E(G_n(p(n)))$ which is at most $\lvert \C_n \rvert (p(n))^q$ by a union bound. 
But as $n \rightarrow \infty$, $ \lvert \C_n \rvert (p(n))^q \leq cn \cdot (p(n))^q = cn \cdot (n^{-1/q} \cdot \frac{p(n)}{p^*(n)})^q = c \left(\frac{p(n)}{p^*(n)}\right)^q \to 0.$

Thus with probability approaching $1$ as $n$ approaches infinity, no subgraph of minimum degree at least $r$ is contained in $G_n(p(n))$. 
Therefore, $\lim_{n \rightarrow \infty} \Pr(G_n(p(n)) \in \D_r)=1$. 
\end{proof}

\begin{remark}
We want to emphasize that the value $q$ in Lemma \ref{lem:coverCandp} determines the lower bound for $p_\G^{\D_r}$.
The majority of work of this paper is to find the largest or sufficiently large value of $q$, which turns out to be the value $q_H$ defined in Theorems \ref{thm: main critical exponent 1} and \ref{thm:critical exponent lower bound}.
Theorems \ref{thm: main critical exponent 1} and \ref{thm:critical exponent lower bound} are proved by combining it with the upper bound established in Section \ref{sec:upper bound}.
\end{remark}

We will prove the existence of $(q,r)$-good signature collections with a large value of $q$ in Lemmas \ref{weak general collection} and \ref{minor collection}.
We will show how these two lemmas imply the main theorems \ref{thm: main critical exponent 1} and \ref{thm:critical exponent lower bound} in Section \ref{sec:Cimplies-main-thm}.

A key sufficient condition to establish the existence of $(q,r)$-good signature collections in Lemmas \ref{weak general collection} and \ref{minor collection} is the following. 

\begin{lemma} \label{sufficient good collection}
Let $r$ be a positive integer and $\G$ a hereditary graph class\footnote{A graph class is {\it hereditary} if every induced subgraph of a member of this class is a member of this class. Note that $\M(H)$ is hereditary for every graph $H$.}.
If there exist nonnegative real numbers $a,t,\zeta$ with $t \leq 2r+1$ such that for every graph $G \in \G$, there exist a subset $Z$ of $V(G)$ with $\lvert Z \rvert \leq \zeta$ and a vertex $z^* \in Z$ such that 
	\begin{enumerate}
		\item every vertex in $Z$ has degree at most $a$ in $G$, and 
		\item for every subgraph $R$ of $G$ with $\delta(R) \geq r$ and with $z^* \in V(R)$, $\lvert V(R) \cap Z \rvert \geq t$, 
	\end{enumerate}
then every graph in $\G$ has a $\left( {\zeta \choose t}{a \choose r}^t, rt-{t \choose 2}, r\right)$-good signature collection. In other words, $\G$ has $\left( rt-{t \choose 2}, r\right)$-good signature collections.
\end{lemma}

\begin{proof}
Assume that for every graph $G \in \G$, there exist $Z$ and $z^*$ satisfying the two conditions stated in this lemma.
We shall prove that every graph $G \in \G$ has a $\left( {\zeta \choose t}{a \choose r}^t, rt-{t \choose 2}, r\right)$-good signature collection by induction on the number of vertices in $G$. 
The claim trivially holds when $|V(G)|=1$, as there exists no subgraph of $G$ of minimum degree at least one.

For any set $T$ of $t$ distinct vertices $z_1,...,z_t$ in $Z$ and every sequence $s=(S_{T,1},S_{T,2},...,S_{T,t})$, where $S_{T,i}$ is a set consisting of $r$ edges of $G$ incident with $z_i$ for every $i \in [t]$, let $S_s = \bigcup_{j=1}^tS_{T,j}$.
Note that $|S_s| \geq rt - \binom{t}{2}$.
Let $\C_0$ be the collection of all possible such sets $S_s$.
Then $\lvert \C_0 \rvert \leq {\zeta \choose t}{a \choose r}^t$ as the number of $t$-element subsets of $Z$ is at most ${\zeta \choose t}$, and each vertex in $Z$ is incident with at most $a$ edges.

The second condition mentioned in the statement of this lemma implies that for every subgraph $R$ of $G$ with $\delta(R) \geq r$ and with $z^* \in V(R)$, the edge-set $E(R)$ contains some member of $\C_0$.
Since $\G$ is hereditary, $G-z^* \in \G$.
Applying the induction hypothesis to $G-z^*$, $G-z^*$ has a $\left( {\zeta \choose t}{a \choose r}^t, rt-{t \choose 2}, r\right)$-good signature collection $\C_1$.
For every subgraph $R$ of $G$ with $\delta(R) \geq r$ and $z^* \not \in V(R)$, $R$ is a subgraph of $G-z^*$ with $\delta(R) \geq r$, so $E(R)$ contains some member of $\C_1$ by the induction hypothesis.

Let $\C_2=\C_0 \cup \C_1$.
Then $\C_2$ has the property that for every subgraph $R$ of $G$ with $\delta(R) \geq r$, $E(R)$ contains some member of $\C_2$.
In addition, by the induction hypothesis, $\lvert \C_2 \rvert \leq \lvert \C_0 \rvert + \lvert \C_1 \rvert \leq {\zeta \choose t}{a \choose r}^t + {\zeta \choose t}{a \choose r}^t(\lvert V(G) \rvert-1)={\zeta \choose t}{a \choose r}^t\lvert V(G) \rvert$.

Note that $\C_2$ satisfies the conditions of being a $\left( {\zeta \choose t}{a \choose r}^t, rt-{t \choose 2}, r\right)$-good signature collection except some member of $\C_2$ possibly has size strictly greater than $rt-{t \choose 2}$.
For each member $M$ of $\C_2$, let $f(M)$ be an arbitrary subset of $M$ of size $rt-{t \choose 2}$.
Note that for every subgraph $R$ of $G$ with $\delta(R) \geq r$, $E(R)$ contains some member $M$ of $\C_2$ and hence contains $f(M)$.
Then the collection $\{f(M): M \in \C_2\}$ is a $\left( {\zeta \choose t}{a \choose r}^t, rt-{t \choose 2}, r\right)$-good signature collection for $G$.
\end{proof}

Note that the exponent of $n$ in $p_{\M(H)}^{\D_r}$ is essentially determined by the size $q$ of the members of $\C$ mentioned in Lemma \ref{lem:coverCandp}, and $q$ is determined by the value $t$ mentioned in Lemma \ref{sufficient good collection}.
The majority of work of this paper is to prove the sufficient condition in Lemma \ref{sufficient good collection} with the correct value $t$. 
The starting point is the following lemma.

\begin{lemma}[Special case of Lemma \ref{stronger better path island shallow}] \label{special_path_island_sketch}
For any $\alpha,\beta \in {\mathbb N}$ and nonnegative integer $\ell$, there exists a real number $d$ such that for every graph $G$ with no $K_{\alpha,\beta}$-minor, there exist $X \subseteq V(G)$ and $z^* \in X$ such that 
			\begin{enumerate}
				\item every vertex in $X$ has degree at most $d$ in $G$, and
				\item there exists a set $S \subseteq V(G)-X$ with $\lvert S \rvert \leq \alpha-1$ such that for every vertex $x \in X$, if the distance between $x$ and $z^*$ is at most $\ell-1$ in $G[X]$, then every neighbor of $x$ in $G$ not contained in $X$ is contained in $S$. 
			\end{enumerate}
\end{lemma}

It is easy to see that every graph with no $H$-minor has no $K_{\tau(H),\beta}$-minor for some large $\beta$, so Lemma \ref{special_path_island_sketch} can be applied to $H$-minor free graphs by choosing $\alpha=\tau(H)$ and some $\beta$ and $\ell$. 
Assume $\alpha=\tau(H) <r$.
Then for every subgraph $R$ of $G$ with $\delta(R) \geq r$ containing $z^*$, it must contain at least $r-\lvert S \rvert \geq r-\alpha+1$ neighbors of $z^*$ in $X$; moreover, unless there are many edges of $R$ between those neighbors, $R$ also contains neighbors of those neighbors in $X$, and so on.
So we can obtain the desired set $Z$ in Lemma \ref{sufficient good collection} by defining it to be the set of vertices in $X$ with distance at most $\ell-1$ from $z^*$ in $G[X]$ (where $\ell$ only depends on the value $t$ that we aim), and we are done.
However, this argument is based on two assumptions: one is $\tau(H)<r$ and the other is that $R$ does not contain many edges between neighbors of $z^*$ in $X$.
The second one is not serious, because in that case $R$ still contains many edges incident with $Z$ and we can modify the proof of Lemma \ref{sufficient good collection} so that the parameter in a good signature collection determining the value $q$ in the exponent of $n$ in the threshold is the lower bound for the number of edges of $R$ incident with $Z$ that we can ensure, instead of a function of the lower bound $t$ for $\lvert V(R) \cap Z \rvert$.
The assumption $\tau(H)<r$ is more serious. 
So we should not assume it.
Before we proceed further, we remark that if $C$ is the component of $R[X \cap V(R)]$ containing $z^*$, then $R[V(C) \cup (V(R) \cap S)]$ is a graph in $\F(R[V(R) \cap S],C,r)$ whose type equals the number of edges of $R$ incident with a vertex in $V(C) \subseteq Z$.

In fact, Lemma \ref{special_path_island_sketch} is a very special case of what we really prove (Lemma \ref{stronger better path island shallow}).
Lemma \ref{stronger better path island shallow} actually shows that we can obtain many such $z^*$ instead of just one $z^*$, with the same set $S$.
It will allow us to construct a subgraph of $G$ isomorphic to $F \wedge_t I$ for some graph $F$ in $\F(I_{\tau(H)-1},C,r)$ with type equal to the number of edges of $R$ incident with $V(C) \subseteq Z$, and for some large integer $t$, where $I$ is the heart.
So if we cannot make sure that $R$ has many edges incident with $V(C)$, then we get a graph $F \wedge_t I$ for some $F \in \F(I_{\tau(H)-1},C,r)$ with small type; but it implies that $s_r(H)$ is small by its definition, since $G$ is $H$-minor free.
Hence it would imply that the value $q$ in the threshold is bounded by $s_r(H)$, as stated in Theorem \ref{thm:critical exponent lower bound}.
With more work we can improve this bound for some graphs $H$ to obtain Theorems \ref{thm: main critical exponent 1}.
This completes the proof sketch. 
More details can be found in Sections \ref{sec:proof-of-C} and \ref{sec:Cimplies-main-thm}. 

We remark that Lemma \ref{stronger better path island shallow} applies to graph classes that are more general than minor-closed families and is of independent interests.
In particular, it generalizes a result of Ossona de Mendez, Oum, and Wood \cite{oow}, and the first author further extends it in a later paper \cite{l_homo} to solve a number of open Tur\'{a}n-type questions for robustly sparse graphs.

\subsection{Organization of the paper}
We prove upper bounds for the threshold probabilities in Section \ref{sec:upper bound} and prove the lower bounds in Sections \ref{sec:islands}, \ref{sec:proof-of-C}, and \ref{sec:Cimplies-main-thm}. 
As we have discussed earlier,  the main lemmas are Lemmas \ref{weak general collection} and \ref{minor collection} regarding the existence of good collections, which are proved in Section \ref{sec:proof-of-C}. 
Lemma \ref{weak general collection} is simple, but Lemma \ref{minor collection} is much more complicated and requires a technical lemma (Lemma \ref{stronger better path island shallow}, which is proved in Section \ref{sec:islands}). 
We then use Lemmas \ref{weak general collection} and \ref{minor collection} to prove the main theorems in Section \ref{sec:Cimplies-main-thm}.
More intuitions for the proof of those lemmas will be provided in Sections \ref{sec:proof-of-C} and \ref{sec:Cimplies-main-thm}.
Finally we conclude the paper with some remarks in Section \ref{sec:concluding remarks}.

\section{Upper bounds for the threshold probabilities} \label{sec:upper bound}

Our goal in this section is proving Corollary \ref{upper bound threshold} which proves some upper bounds of the thresholds. 
We will construct sequences of graphs $(G_n: n \in {\mathbb N})$  that are hard to be made $(r-1)$-degenerate by randomly deleting edges.
Namely, if $p$ goes to 0 too slow, then $\lim_{n \rightarrow \infty}\Pr(G_n(p) \in \D_r)=0$.
These sequences $(G_n: n \geq 1)$ will be used to establish upper bounds for $p_{\M(H)}^{\D_r}$ for different graphs $H$.
The same construction will also be used for proving upper bounds for $p_{\M(H)}^{\chi^\ell_r}$.

A {\it stable set} in a graph is a subset of pairwise non-adjacent vertices. 
A standard second moment method proves the following lemma.

\begin{lemma} \label{prob general}
Let $Q$ be a graph and $Z$ a (possibly empty) stable set in $Q$.
Let $q=\lvert E(Q) \rvert \geq 2$. 
For every $n \in {\mathbb N}$, let $\ell_n=\lfloor \frac{n-\lvert Z \rvert}{\lvert V(Q) \rvert-\lvert Z \rvert} \rfloor$ and let $G_n$ be the graph obtained from $Q \wedge_{\ell_n}Z$ by adding isolated vertices to make $G_n$ have $n$ vertices.
Let $p: {\mathbb N} \rightarrow [0,1]$ with $\lim_{n \rightarrow \infty} n^{-1/q}/p(n)=0$. Then for every $k \in {\mathbb N}$, $\lim_{n \rightarrow \infty}\Pr(Q \wedge_k Z \subseteq G_n(p))=1$.
\end{lemma}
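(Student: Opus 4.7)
The strategy is a straightforward second-moment argument that exploits the combinatorial structure of $G_n$. Since $Z$ is stable in $Q$, every edge of $Q$ has at least one endpoint in $V(Q)\setminus Z$; consequently, the $\ell_n$ natural copies of $Q$ that constitute $Q\wedge_{\ell_n}Z$ are pairwise edge-disjoint, even though they share the vertices in $Z$. For each $i\in[\ell_n]$, let $X_i$ be the indicator of the event that the $i$-th such copy of $Q$ survives in $G_n(p(n))$, i.e., all $q$ of its edges remain. Edge-disjointness then yields that $X_1,\ldots,X_{\ell_n}$ are mutually independent $\mathrm{Bernoulli}(p(n)^q)$ random variables.

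Let $X=\sum_{i=1}^{\ell_n}X_i$ count the number of surviving copies. Then $\E[X]=\ell_n p(n)^q$. Since $\lvert V(Q)\rvert>\lvert Z\rvert$ (because $q\ge 2$ and $Z$ is stable), we have $\ell_n=\Theta(n)$, so the hypothesis $n^{-1/q}/p(n)\to 0$, equivalently $p(n)\,n^{1/q}\to\infty$, gives
\[ \E[X]=\Theta\!\left(\bigl(p(n)\,n^{1/q}\bigr)^{q}\right)\longrightarrow\infty. \]
Because the $X_i$'s are independent Bernoulli variables, $\Var(X)\le \E[X]$, and Chebyshev's inequality yields
\[ \Pr(X<k)\;\le\;\Pr\!\bigl(|X-\E[X]|>\E[X]-k\bigr)\;\le\;\frac{\E[X]}{(\E[X]-k)^{2}}\;\longrightarrow\;0. \]

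Finally, whenever $X\ge k$, one may pick any $k$ of the surviving copies of $Q$; together with the shared set $Z$ they realise a subgraph of $G_n(p(n))$ isomorphic to $Q\wedge_k Z$. Consequently $\Pr(Q\wedge_k Z\subseteq G_n(p(n)))\ge\Pr(X\ge k)\to 1$, which is exactly the desired conclusion.

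The only delicate point is the independence of the indicators $X_i$, which is precisely why the stability of $Z$ is imposed in the hypothesis: if $Z$ spanned an edge, that edge would lie in all $\ell_n$ copies simultaneously and the survival events would be positively correlated, breaking the variance estimate. Once independence is secured, the rest is a textbook application of the second moment method and no further obstacle arises.
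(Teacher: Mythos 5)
Your proposal is correct and follows essentially the same route as the paper: both apply the second moment method to the indicator variables for the $\ell_n$ edge-disjoint copies of $Q$ in $Q\wedge_{\ell_n}Z$, establish $\E[X]\to\infty$ from $\ell_n=\Theta(n)$ and $p(n)n^{1/q}\to\infty$, bound $\Pr(X<k)$ via Chebyshev, and assemble $k$ surviving copies into $Q\wedge_k Z$. The only cosmetic difference is that you invoke mutual independence of the $X_i$'s to get $\Var(X)\le\E[X]$ directly, whereas the paper computes $\E[X_iX_j]=p(n)^{2q}$ by hand; these are the same fact.
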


\begin{proof}
Note that for every $n \in {\mathbb N}$, $G_n$ contains $\ell_n$ edge-disjoint copies of $Q$, denoted by $A_1,A_2,...,A_{\ell_n}$.
For each $1 \leq i \leq \ell_n$, define a random variable $X_i$ to be 1 if all the edges of $A_i$ remain in the random subgraph $G_n(p)$; let $X_i = 0$ otherwise. 

Let $X = \sum_{i=1}^{\ell_n} X_i$.
It suffices to show $\lim_{n \to \infty} \Pr(X \geq k) = 1$. 
This is because when $X \geq k$, $\bigcup_{i: X_i=1}A_i$ contains $Q \wedge_k Z$ as a subgraph in $G_n(p)$, as desired. 

Note $\mathbb{E}[X_i] = \Pr(X_i =1) = (p(n))^q$. 
So $n\mathbb{E}[X_i] \to \infty$ as $n \to \infty$, by the definition of $p$.
Hence $\ell_n\mathbb{E}[X_i] \to \infty$ as $n \to \infty$ by the definition of $\ell_n$.
By the linearity of expectation, 
$\mathbb{E}[X] =\sum_{i=1}^{\ell_n}\mathbb{E}[X_i] =  \ell_n \mathbb{E}[X_i] \gg k$.
Since $X_i$'s are i.i.d.\ random variables, the desired inequality $\lim_{n \to \infty} \Pr(X \geq k) = 1$ is a consequence of standard concentration inequalities (such as Chebyshev's inequality). 
\end{proof}

\begin{lemma} \label{claim:UBd}
Let $r,r'$ be  integers with $r \geq 2$ and $0 \leq r' \leq r$ and let $s$ be a nonnegative integer.
Let $F_0$ be a connected graph and let $F \in {\mathcal F}(I_{r'},F_0,r)$ be of type $s$. 
Let $Z$ be the heart of $F$ (thus $Z$ is a stable set of size $r'$ in $F$).

For every positive integer $n$, let $\ell_n = \lfloor \frac{n-\lvert Z \rvert}{\lvert V(F_0) \rvert} \rfloor$ and let $G_n$ be an $n$-vertex graph obtained from $F \wedge_{\ell_n} Z$ by adding isolated vertices to make $G_n$ have $n$ vertices.
Let $p: {\mathbb N} \rightarrow [0,1]$ with $\lim_{n \rightarrow \infty} n^{-1/s}/p(n)=0$.
Then $\lim_{n \rightarrow \infty} \Pr(G_n(p) \in \D_r) = 0$.
\end{lemma}

\begin{proof}
By Lemma \ref{prob general}, $\lim_{n \rightarrow \infty} \Pr(F \wedge_r Z \subseteq G_n)=1$.
We claim $F \wedge_r Z$ has a subgraph of minimum degree at least $r$. 
Every vertex in $V(F) \setminus Z$ has degree at least $r$ in $F$.
So every vertex in $V(F \wedge_r Z) \setminus Z$ has degree at least $r$ in $F \wedge_r Z$. 
For each vertex in $Z$, if it has zero degree in $F$, it has zero degree in $F \wedge_r Z$; if it has degree at least one in $F$, it has degree at least $r$ in $F \wedge_r Z$ as each of its neighbors has $r$ copies in $F \wedge_r Z$. 
So some component of $F \wedge_r Z$ has of minimum degree at least $r$. 
Therefore, $\lim_{n \rightarrow \infty} \Pr(G_n(p) \in \D_r) = 0$.
\end{proof}

Recall that for every nonnegative integer $t$, we denote the edgeless graph on $t$ vertices by $I_t$, where $I_0$ is the empty graph with no vertices. 
The proof of the following lemma is straightforward, so we move it to the appendix (Section \ref{subsec:appendix_pf_color}).

\begin{lemma} \label{nonchoosable}
Let $r$ be a positive integer and $w$ an integer with $0 \leq w \leq r$.
Then $I_{r-w} \vee r^{r-w}K_{w+1}$ is not $r$-choosable.
\end{lemma}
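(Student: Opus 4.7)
The plan is to construct an explicit bad list-assignment by exploiting the fact that the $r^{r-w}$ copies of $K_{w+1}$ are exactly in bijection with the possible color-tuples for the $r-w$ vertices in the $I_{r-w}$ side of the join. Denote the vertices of $I_{r-w}$ by $u_1,\dots,u_{r-w}$, and assign to $u_i$ a list $L(u_i)$ of size $r$, choosing these $r-w$ lists to be pairwise disjoint sets of fresh colors (so in total $r(r-w)$ distinct colors appear on the universal side). Index the $r^{r-w}$ copies of $K_{w+1}$ by the tuples $(a_1,\dots,a_{r-w})\in L(u_1)\times\cdots\times L(u_{r-w})$; call the copy indexed by this tuple $K(a_1,\dots,a_{r-w})$. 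Fix a pool $S$ of $w$ further fresh colors (disjoint from all $L(u_i)$), and assign to every vertex of $K(a_1,\dots,a_{r-w})$ the list $\{a_1,\dots,a_{r-w}\}\cup S$, which has size $(r-w)+w=r$ because the $a_i$ are pairwise distinct (they lie in the disjoint lists $L(u_i)$).

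To finish, I would argue that this list-assignment admits no proper list-coloring. Suppose for contradiction $c$ is one; set $a_i^\ast := c(u_i)\in L(u_i)$. Look at the copy $K^\ast := K(a_1^\ast,\dots,a_{r-w}^\ast)$. Every vertex $v$ of $K^\ast$ is joined to each $u_i$, so $c(v)\neq a_i^\ast$ for all $i$; combined with $c(v)\in\{a_1^\ast,\dots,a_{r-w}^\ast\}\cup S$, this forces $c(v)\in S$. Thus $c$ restricted to $V(K^\ast)$ is a proper coloring of $K_{w+1}$ using only the $w$ colors of $S$, which is impossible since a clique on $w+1$ vertices has chromatic number $w+1>w$. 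Hence no list-coloring exists, proving $I_{r-w}\vee r^{r-w}K_{w+1}$ is not $r$-choosable.

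There is no real obstacle here: the construction is a standard pigeonhole/enumeration trick, and the only thing to be careful about is bookkeeping that the $r-w$ chosen values $a_i^\ast$ are distinct (guaranteed by disjointness of the $L(u_i)$) and that the reserve set $S$ is disjoint from these chosen values (guaranteed by freshness of $S$), so that the list of each vertex of $K^\ast$ has the claimed size $r$ and the forced restriction to $S$ really does reduce the problem to coloring $K_{w+1}$ with only $w$ colors. The boundary case $w=r$ (where $I_{r-w}$ is empty and the graph is just $K_{r+1}$) is handled trivially since $K_{r+1}$ is not even $r$-colorable, and the case $w=0$ (where each $K_{w+1}$ is a single vertex) is the classical construction showing that $K_{r,r^r}$ is not $r$-choosable.
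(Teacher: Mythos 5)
Your proof is correct and follows essentially the same route as the paper's: you take pairwise disjoint $r$-lists on the $I_{r-w}$ side, index the $r^{r-w}$ cliques by the $r^{r-w}$ tuples of possible colors on that side, give each clique the list consisting of its indexing tuple together with a fixed reserve set $S$ of $w$ fresh colors, and derive a contradiction by locating the clique indexed by the chosen tuple, whose vertices are forced into $S$ yet form a $K_{w+1}$. This is precisely the paper's construction with $S=\{-1,\dots,-w\}$ and $L_{v_i}=\{ri+j:0\le j\le r-1\}$, just phrased with an explicit tuple-indexing of the cliques rather than the paper's component-wise description.
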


\begin{lemma}\label{claim:UB2}
Let $r$ be an integer with $r \geq 2$ and let $w$ be an integer with $r \geq w \geq 0$. 
Let $q = (w+1) r - \binom{w+1}{2}$.
For every $n \in {\mathbb N}$ with $n>r$, let $G_n$ be the $n$-vertex graph obtained from $I_{r-w} \vee \lfloor \frac{n-(r-w)}{w+1} \rfloor K_{w+1}$ by adding isolated vertices to make $G_n$ have $n$ vertices. 
Let $p: {\mathbb N} \rightarrow [0,1]$ be a function with $\lim_{n \rightarrow \infty} n^{-1/q}/p(n)=0$.
Then the following hold.
	\begin{enumerate}
		\item $\lim_{n \rightarrow \infty} \Pr(I_{r-w} \vee r^r K_{w+1} \subseteq G_n(p) ) = 1$.
		\item $\lim_{n \rightarrow \infty} \Pr(G_n(p) \in \D_r) = \lim_{n \rightarrow \infty} \Pr(G_n(p) \in \chi_r^\ell) = 0$.
		\item If $r \neq w$, then $\lim_{n \rightarrow \infty} \Pr(G_n(p) \in \chi_{w+1})=0$.
		\item If $r$ is divisible by $w+1$, then $\lim_{n \rightarrow \infty} \Pr(G_n(p) \in \R_r) = 0$.
	\end{enumerate}
\end{lemma}

\begin{proof}
Let $Q=I_{r-w} \vee K_{w+1}$, and let $Z$ be the subset of $V(Q)$ corresponding to $V(I_{r-w})$.
For every $n \in {\mathbb N}$ with $n>r$, let $G_n'=Q \wedge_{\lfloor \frac{n-(r-w)}{w+1} \rfloor} Z$.
So for every $n \in {\mathbb N}$, $G_n$ is the graph obtained from $G_n'$ by adding isolated vertices to make $G_n$ have $n$ vertices.
Note that the number of edges of $Q$ incident with at least one vertex in $V(Q)-Z$ is $(w+1)(r-w) + \binom{w+1}{2} = q$.

Note that for any positive integer $k$, $I_{r-w} \vee k K_{w+1} = Q\wedge_{k} Z$.
Hence by Lemma \ref{prob general}, $\lim_{n \rightarrow \infty} \Pr(Q \wedge_{r^r} Z \subseteq G_n(p))  = \lim_{n \rightarrow \infty} \Pr(I_{r-w} \vee r^r K_{w+1} \subseteq G_n(p)) = 1$.
Since $I_{r-w} \vee r^r K_{w+1}$ has minimum degree at least $r$, $\lim_{n \rightarrow \infty} \Pr(G_n(p) \in \D_r) = 0$.
By Lemma \ref{nonchoosable}, $I_{r-w} \vee r^r K_{w+1}$ is not $r$-choosable, so $\lim_{n \rightarrow \infty} \Pr(G_n(p) \in \chi_r^\ell) = 0$.

If $r \neq w$, then $I_{r-w} \vee r^r K_{w+1}$ is not properly $(w+1)$-colorable, so $\lim_{n \rightarrow \infty} \Pr(G_n(p) \in \chi_{w+1})=0$.

If $r$ is divisible by $w+1$, then $I_{r-w} \vee \frac{r}{w+1} K_{w+1}$ is a $r$-regular subgraph of $I_{r-w} \vee r^r K_{w+1}$, so $\lim_{n \rightarrow \infty} \Pr(G_n(p) \in \R_r) = 0$.
\end{proof}

To define another sequence of graphs that are hard to be made $(r-1)$-degenerate, we need the following definition.
\begin{dfn}\label{dfn:Lt}
Let $r$ be a positive integer with $r \geq 4$. In the graph $I_{r-1} \vee K_3$, 
let $Y$ be the stable set of size $r-1$ corresponding to $V(I_{r-1})$, and let $X$ be the three vertices in $K_3$.
Let $L$ be a connected graph obtained from $I_{r-1} \vee K_3$ by deleting the edges of a matching of size three between $X$ and $Y$.

For every positive integer $t$, let $L_t=L \wedge_t Y$. 

\end{dfn}
Note that $L$ exists as $r \geq 4$.
Also, $L_t$ has $(r-1) + 3t$ vertices and $3(r-1)t$ edges.

\begin{lemma}\label{claim:UB3}
Let $r$ be an integer with $r\geq 4$.  
For every $n \in {\mathbb N}$, let $G_n$ be the $n$-vertex graph obtained from $L_{\lfloor (n-r+1)/3 \rfloor}$ by adding isolated vertices to make $G_n$ have $n$ vertices. 
Let $p: {\mathbb N} \rightarrow [0,1]$ be a function such that $\lim_{n \rightarrow \infty} n^{-1/(3r-3)}/p(n)=0$.
Then $\lim_{n \rightarrow \infty} \Pr(G_n(p) \in \D_r) = \lim_{n \rightarrow \infty} \Pr(G_n(p) \in \chi_r^\ell) = 0$.
\end{lemma}

\begin{proof}
By Lemma \ref{prob general}, $\lim_{n \rightarrow \infty} \Pr(L_{r^{r-1}} \subseteq G_n(p)) = 1$.
Since $L_{r^{r-1}}$ has minimum degree at least $r$, $\lim_{n \rightarrow \infty} \Pr(G_n(p) \in \D_r) = 0$.

Now we show that $L_{r^{r-1}}$ is not $r$-choosable.
Note that it implies that $\lim_{n \rightarrow \infty} \Pr(G_n(p) \in \chi_r^\ell) = 0$. We will construct a list of $r$ colors for each vertex $v$, denoted as $L_v$.
Denote $Y=\{y_1,y_2,...,y_{r-1}\}$.
For each $i$ with $1 \leq i \leq r-1$, define $L_{y_i}=\{ri+j: 0 \leq j \leq r-1\}$. Thus the color lists of $y_i$ and $y_j$ are disjoint for any $i \neq j$.
Let $C_1,C_2,...,C_{r^{r-1}}$ be the $r^{r-1}$ copies of $V(L)-Y$ in $L_{r^{r-1}}$.
For each $i$ with $1 \leq i \leq r^{r-1}$, let $S_i$ be a set of size $r-1$ such that $|S_i \cap L_{v_j}|=1$ for every $1 \leq j \leq r-1$, and $S_k \neq S_{k'}$ for distinct $k,k' \in [r^{r-1}]$. 
This is possible since there are $r^{r-1}$ ways to pick exactly one element from each of $L_{y_i}$ for $1 \leq i \leq r-1$.
For each $i$ with $1 \leq i \leq r^{r-1}$ and each vertex $v$ in $C_i$, define $L_v=\{-1,-2\} \cup (S_i-L_{y_v})$ where $y_v$ is the vertex in $\{y_1,y_2,...,y_{r-1}\}$ such that $v$ is not adjacent in $L_{r^{r-1}}$ to $y_v$.
Note that each $L_v$ has size $r$.
Then it is easy to see that $L_{r^{r-1}}$ is not colorable with respect to $(L_v: v \in V(L_{r^{r-1}}))$.
\end{proof}

The following corollary provides an upper bound for threshold probabilities. 

\begin{corollary} \label{upper bound threshold}
Let $r \geq 2$ be an integer and let $w$ be an integer with $r \geq w \geq 0$.
Let $\G$ be a monotone class of graphs.
Then the following hold.
	\begin{enumerate}
		\item If there exists $n_0 \in {\mathbb N}$ such that $\{K_{r,n-r}: n \geq n_0\} \subseteq \G$, then $p_{\G}^{\D_r} = O(n^{-1/r})$, $p_{\G}^{\chi_r^\ell} = O(n^{-1/r})$ and $p_{\G}^{\R_r} = O(n^{-1/r})$.
		\item If there exists $n_0 \in {\mathbb N}$ such that $\{I_{r-w} \vee tK_{w+1}: t \geq n_0\} \subseteq \G$, then the following hold.
			\begin{enumerate}
				\item $p_{\G}^{\D_r} = O(n^{-1/q})$ and $p_{\G}^{\chi_r^\ell} = O(n^{-1/q})$, where $q=(w+1)r-{w+1 \choose 2}$.
				\item If $r \neq w$, then $p_{\G}^{\chi_{w+1}} = O(n^{-1/q})$, where $q=(w+1)r-{w+1 \choose 2}$.
				\item If $r$ is divisible by $w+1$, then $p_{\G}^{\R_r} = O(n^{-1/q})$, where $q=(w+1)r-{w+1 \choose 2}$.
			\end{enumerate}
		\item If $r \geq 4$ and there exists $n_0 \in {\mathbb N}$ such that $\{L_t: t \geq n_0\} \subseteq \G$, then $p_{\G}^{\D_r}=O(n^{-1/(3r-3)})$ and $p_{\G}^{\chi_r^\ell}=O(n^{-1/(3r-3)})$.
		\item Let $r,r'$ be an integers with $r \geq 2$ and $r' \leq r$ and let $s$ be a nonnegative integer.
		Let $F_0$ be a connected graph and let $F \in {\mathcal F}(I_{r'},F_0,r)$ be with type $s$. 
		Let $Z$ be the heart of $F$.
		If there exists $n_0 \in {\mathbb N}$ such that $\{F \wedge_t Z: t \geq n_0\} \subseteq \G$, then $p_{\G}^{\D_r}=O(n^{-1/s})$.
	\end{enumerate}
\end{corollary}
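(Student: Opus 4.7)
The plan is to derive each of the four statements of Corollary~\ref{upper bound threshold} as a direct consequence of one of Lemmas~\ref{claim:UBd}, \ref{claim:UB2}, and \ref{claim:UB3} by supplying, for each large enough $n$, a witness graph $G_n \in \G$ on exactly $n$ vertices that matches the explicit graph sequence appearing in the invoked lemma. Once this matching is in place, condition~2 of Definition~\ref{def upper threshold}, combined with the conclusion of the lemma that $G_n(p) \notin \P$ a.a.s.\ whenever $n^{-1/q}/p(n) \to 0$, immediately yields the desired bound $p_\G^\P = O(n^{-1/q})$.

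The correspondence between statements and lemmas is as follows. Statement~1 is the $w=0$ specialization of statement~2: $I_r \vee tK_1 = K_{r,t}$, the exponent is $q = (0+1)r - \binom{1}{2} = r$, the divisibility condition $(w+1)\mid r$ holds trivially, and items~2 and~4 of Lemma~\ref{claim:UB2} supply the three upper bounds on $p_\G^{\D_r}$, $p_\G^{\chi_r^\ell}$, and $p_\G^{\R_r}$ (item~3 gives only the trivial $\chi_1$ bound and is not needed). For statement~2 I would invoke Lemma~\ref{claim:UB2} directly, reading off its items~2, 3, and~4 as the three sub-parts (a), (b), and~(c). For statement~3 I would invoke Lemma~\ref{claim:UB3}, and for statement~4 Lemma~\ref{claim:UBd}; in each case the graph sequence constructed in the lemma is precisely the padded version of the subfamily assumed to lie in $\G$.

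The only detail that still requires a brief argument is that each lemma's witness $G_n$ is obtained by appending isolated vertices to a core graph from the assumed subfamily, whereas the hypothesis only asserts that the cores themselves belong to $\G$. To place $G_n$ in $\G$ I would use monotonicity: for $n$ large enough, the hypothesis supplies a member $H \in \G$ with more than $n$ vertices that contains the desired core intact; passing to a well-chosen $n$-vertex induced subgraph of $H$ that contains the core and then deleting every edge incident with the extra vertices produces exactly the padded $G_n$, and both operations preserve membership in $\G$ by monotonicity. Finitely many small values of $n$ not covered this way can be handled arbitrarily without affecting the asymptotic threshold. I do not foresee a hard step: all of the probabilistic and combinatorial content already sits in Lemmas~\ref{prob general}--\ref{claim:UB3}, and what remains is the bookkeeping above.
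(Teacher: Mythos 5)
Your proposal is correct and follows essentially the same route as the paper: Statements 2, 3, and 4 are read off directly from Lemmas \ref{claim:UB2}, \ref{claim:UB3}, and \ref{claim:UBd} respectively, and Statement 1 is the $w=0$ specialization of Statement 2, exactly as the paper does. The extra paragraph you supply about embedding the padded witness graph $G_n$ as a subgraph of a larger member of the hypothesized subfamily and invoking monotonicity is a legitimate (and correct) filling-in of a step the paper leaves implicit.
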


\begin{proof}
Statements 2, 3 and 4 of this corollary immediately follows from Lemmas \ref{claim:UB2}, \ref{claim:UB3} and \ref{claim:UBd}, respectively.
Statement 1 of this corollary following from Statement 2 by taking $w=0$.
\end{proof}

\section{Neighbors of low degree vertices} \label{sec:islands}

In this section we prove Lemma \ref{stronger better path island shallow}, which gives structural information for graphs with no dense shallow minors and is a generalization of the main lemma in the work of Ossona de Mendez, Oum, and Wood \cite{oow}, where they used the lemma to study defective coloring for a broader class of graphs.  
We refer interested readers to \cite{oow} for details.
Lemma \ref{stronger better path island shallow} might be of independent interests beyond this paper.
We will state the motivation of this lemma in Section \ref{sec:proof-of-C} when we are about to apply it.

The {\it average degree} of a graph $G$ is $\frac{2\lvert E(G) \rvert}{\lvert V(G) \rvert}$.
The {\it maximum average degree} of a graph $G$ is $\max_H \frac{2\lvert E(H) \rvert}{\lvert V(H) \rvert}$, where the maximum is over all subgraphs $H$ of $G$.
The following lemma can be found in \cite[Lemma 18]{Wood-count-clique} or in the proof of \cite[Theorem 1.1]{FW-count-clique}.

\begin{lemma}[{\cite[Lemma 18]{Wood-count-clique},\cite{FW-count-clique}}] \label{number cliques}
Let $r$ be a positive integer and let $k$ be a positive real number.
If $G$ is a graph of maximum average degree at most $k$, then $G$ contains at most ${k \choose r-1}\lvert V(G) \rvert$ cliques of size $r$.
\end{lemma}

For every nonnegative integer $\ell$, we say that a graph $G$ is an {\it $\ell$-subdivision} of a graph $H$ if it can be obtained from $H$ by subdividing each edge of $H$ exactly $\ell$ times.
That is, $G$ can be obtained from $H$ by replacing each edge of $H$ by a path of length $\ell+1$, where those paths are pairwise internally disjoint.
For a set $S$ of nonnegative integers, we say a graph $G$ is an {\it $S$-subdivision} of $H$ if for every $e \in E(H)$, there exists $s_e \in S$ such that $G$ can be obtained from $H$ by subdividing each edge $e$ of $H$ exactly $s_e$ times. 
Thus an $\ell$-subdivision is the same as an $\{ \ell \}$-subdivision. 
For every nonnegative integer $\ell$, a graph $G$ is a {\it $(\leq \ell)$-subdivision} of $H$ if it is an $([\ell] \cup \{0\})$-subdivision of $H$. 

The {\it radius} of a graph $G$ is the minimum $k$ such that there exists a vertex $v$ of $G$ such that every vertex of $G$ has distance from $v$ at most $k$.
Let $\ell \in \mathbb{Z} \cup \{\infty\}$.
We say that a graph $G$ contains a graph $H$ as an {\it $\ell$-shallow minor} if $H$ can be obtained from a subgraph $G'$ of $G$ by contracting pairwise disjoint connected subgraphs of $G'$ of radius at most $\ell$.
In other words, every branch set of an $\ell$-shallow minor is a connected subgraph of radius at most $\ell$.
Note that $G$ contains $H$ as an $\infty$-shallow minor if and only if $G$ contains $H$ as a minor; $G$ contains $H$ as a $0$-shallow minor if and only if $H$ is a subgraph of $G$.

The next concept is important in our proof.
\begin{dfn}\label{dfn:adherent}
For a graph $G$, a subset $Y$ of $V(G)$, and an integer $r$, we say a subgraph $H$ of $G$ is {\rm $r$-adherent to $Y$} if $V(H) \cap Y = \emptyset$ and $\lvert N_G(V(H)) \cap Y \rvert \geq r$.
\end{dfn}

The proof of the following lemma is inspired by the proof of \cite[Lemma 2.2]{oow}.

\begin{lemma} \label{r-adherent size}
For any $r,t \in {\mathbb N}$ and positive real number $k'$, there exists a real number $\alpha>0$ such that for every $\ell \in {\mathbb Z} \cup \{\infty\}$, for every graph $G$, for every $Y \subseteq V(G)$, and for every collection $\C$ of disjoint connected subgraphs of $G-Y$, if each member of $\C$ is $r$-adherent to $Y$ and of radius at most $\ell$, then either 
	\begin{enumerate}
		\item there exists a graph $H$ of average degree greater than $k'$ such that $G$ contains a subgraph isomorphic to a $[2\ell+1]$-subdivision of $H$, 
		\item $G$ contains $K_{r,t}$ as an $\ell$-shallow minor, or 
		\item $\lvert \C \rvert \leq \alpha \lvert Y \rvert$.
	\end{enumerate}
\end{lemma}

\begin{proof}
Let $r,t \in {\mathbb N}$ and let $k'$ be a positive real number.
Define $\alpha=(t-1){k' \choose r-1} + k'/2$.

Let $\ell \in {\mathbb Z} \cup \{\infty\}$, let $G$ be a graph and $Y \subseteq V(G)$.
Let $\C$ be a collection of disjoint connected subgraphs of $G-Y$ such that each member of $\C$ is $r$-adherent to $Y$ and of radius at most $\ell$.

Assume that for every graph $H$, if $G$ contains some subgraph isomorphic to a $[2\ell+1]$-subdivision of $H$, then the average degree of $H$ is at most $k'$.
Assume that $G$ does not contain $K_{r,t}$ as an $\ell$-shallow minor.
We shall show that Statement 3 of this lemma holds.

Let $G'$ be the graph obtained from $G$ by contracting each member of $\C$ into a vertex.
Since each member of $\C$ is a subgraph of $G$ of radius at most $\ell$, $G'$ is an $\ell$-shallow minor of $G$.
In addition, $Y \subseteq V(G')$ since each member of $\C$ is disjoint from $Y$. 
Let $Z=V(G')-V(G)$.
(That is, $Z$ is the set of the vertices of $G'$ obtained by contracting members of $\C$.)
Define $G''$ to be the graph obtained from $G' - E(G'[Y])$ by repeatedly picking a vertex $v$ in $Z$ that is adjacent in $G'$ to a pair of nonadjacent vertices $u,w$ in $(G' - E(G'[Y]))[Y]$, deleting $v$, and adding an edge $uw$, until for any remaining vertex in $Z$, its neighbors in $Y$ form a clique.

Let $H=G''[Y]$.
So some subgraph $H'$ of $G'$ is isomorphic to a $1$-subdivision of $H$.
Together with the fact that $G$ contains $H'$ as an $\ell$-shallow minor and $V(H) = Y \subseteq V(G)-Z$, we know $G$ contains a subgraph isomorphic to a $[2\ell+1]$-subdivision of $H$.
This implies that for every subgraph $L$ of $H$, $G$ contains a subgraph isomorphic to a $[2\ell+1]$-subdivision of $L$.
So the average degree of any subgraph of $H$ is at most $k'$ by our assumption.
Hence there are at most 
\begin{equation}
     {k' \choose r-1} \lvert V(H) \rvert = {k' \choose r-1} \lvert Y \rvert \label{eq:1}
\end{equation} cliques of size $r$ in $H$ by Lemma \ref{number cliques}.

Since $G$ contains $G'$ as an $\ell$-shallow minor, $G'$ does not contain $K_{r,t}$ as a subgraph, for otherwise $G$ contains $K_{r,t}$ as an $\ell$-shallow minor. 
This implies that for each clique $K$ in $G''[Y]$ of size $r$, $\lvert \{z \in Z \cap V(G''): K \subseteq N_{G''}(z)\} \rvert \leq t-1$.
In addition, for every $z \in Z \cap V(G'')$, $N_{G''}(z) \cap Y$ is a clique consisting of at least $r$ vertices in $H$ since every member of $\C$ is $r$-adherent to $Y$.
So a double counting argument applied to (\ref{eq:1}) implies
\[\lvert Z \cap V(G'') \rvert \leq (t-1) {k' \choose r-1} \lvert Y \rvert.\]

Furthermore, by the definition of $G''$, the vertices in $Z$ but not in $V(G'')$ are the ones being deleted while adding one edge in between two vertices in $Y$. Thus $\lvert Z-V(G'') \rvert \leq \lvert E(G''[Y]) \rvert = \lvert E(H) \rvert \leq k'\lvert Y \rvert/2$.
So $\lvert \C \rvert = \lvert Z \rvert \leq (t-1){k' \choose r-1} \lvert Y \rvert + k' \lvert Y \rvert/2 = \alpha \lvert Y \rvert$.
\end{proof}

For any vertex $x$ of a graph $G$ and any (possibly negative) real number $\ell$, we denote by $N_G^{\leq \ell}[x]$ the set of all the vertices in $G$ whose distance to $x$ is at most $\ell$; in particular, $N_G^{\leq 1}[x]=N_G[x]$.

\begin{dfn}\label{dfn:span}
For a subset $Y$ of $V(G)$, $v \in V(G)-Y$ and integers $k$ and $r$, we define a \emph{$(v,Y,k,r)$-span (in $G$)} to be a connected subgraph $H$ of $G-Y$ containing $v$ such that $\lvert Y \cap N_G(V(H)) \rvert \geq r$, and for every vertex $u$ of $H$, there exists a path in $H$ from $v$ to $u$ of length at most $k$.
\end{dfn}

Note that if $H$ is a $(v, Y,k,r)$-span in $G$, then $H$ is $r$-adherent to $Y$, and $V(H) \subseteq N_H^{\leq k}[v] \subseteq N_G^{\leq k}[v]$. 
A $(v,Y,k,r)$-span $H$ is {\it minimal} if no proper subgraph of $H$ is a $(v,Y,k,r)$-span. 

\begin{lemma}\label{claim:number of vtx in minimal span} 
Let $G$ be a graph, $Y \subseteq V(G)$ and $k,r$ be integers.
Then every minimal $(v,Y,k,r)$-span is a subgraph of a union of at most $r$ paths in $G-Y$ where each path starts from $v$ and is of length at most $k$. 
In particular, every minimal $(v,Y,k,r)$-span contains at most $kr+1$ vertices.
\end{lemma}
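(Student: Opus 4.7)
The plan is to directly exploit the minimality of $H$ together with the two defining properties of a $(v,Y,k,r)$-span. Specifically, the minimum-degree-style hypothesis that $|Y \cap N_G(V(H))| \geq r$ hands us $r$ distinguished neighbors in $Y$, and the bounded-distance hypothesis supplies, for each neighbor of such a $y \in Y$ in $V(H)$, a short path back to $v$. Taking the union of these short paths produces a small sub-span, and minimality forces equality.

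In more detail, I would let $H$ be a minimal $(v,Y,k,r)$-span and first pick distinct vertices $y_1,\dots,y_r \in Y \cap N_G(V(H))$, which is possible because $|Y \cap N_G(V(H))| \geq r$. For each index $i \in [r]$, choose any $u_i \in V(H)$ with $y_iu_i \in E(G)$, and then, using the definition of a span, choose a path $P_i$ in $H$ from $v$ to $u_i$ of length at most $k$. Set $H' := \bigcup_{i=1}^{r} P_i \subseteq H$. I would then check that $H'$ is itself a $(v,Y,k,r)$-span: it is a subgraph of $H$ and hence of $G-Y$; it is connected because every $P_i$ contains $v$; we have $y_i \in Y \cap N_G(V(H'))$ for all $i$, so $|Y \cap N_G(V(H'))| \geq r$; and for every $u \in V(H')$ there is some $i$ with $u \in V(P_i)$, and the sub-path of $P_i$ from $v$ to $u$ is a path in $H'$ of length at most $|E(P_i)| \leq k$.

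By the minimality of $H$, this forces $H' = H$, so $H$ is the union of at most $r$ paths in $G-Y$ each starting at $v$ and of length at most $k$, proving the first statement. For the cardinality bound, each $P_i$ has at most $k+1$ vertices, and all of them pass through $v$, so $|V(H)| = |V(H')| \leq 1 + \sum_{i=1}^{r}(|V(P_i)| - 1) \leq 1 + rk$, giving the second statement.

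There is really no serious obstacle here; the only thing to be careful about is to verify each clause of Definition \ref{dfn:span} for the candidate sub-span $H'$, in particular that the distance-at-most-$k$ condition transfers from $H$ to $H'$ (which works because sub-paths of the $P_i$ are available inside $H'$ itself, not merely inside $H$). Once that is in hand, the minimality step closes the argument immediately.
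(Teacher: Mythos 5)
Your proof is correct and follows essentially the same line as the paper: exhibit a subgraph $H'$ that is a union of at most $r$ short $v$-paths reaching enough of $Y$, verify $H'$ is itself a $(v,Y,k,r)$-span, and invoke minimality to conclude $H' = H$. The only cosmetic difference is that you index by $r$ distinct vertices of $Y$ (allowing the associated $u_i$'s, and hence the paths $P_i$, to repeat), whereas the paper greedily selects at most $r$ distinct vertices of $V(H)$ so that each contributes a new $Y$-neighbor; either way one gets a union of at most $r$ paths of length at most $k$, and the bound $|V(H)| \leq rk+1$.
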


\begin{proof}
Let $H$ be a minimal $(v,Y,k,r)$-span.
Since $H$ is a $(v,Y,k,r)$-span, $\lvert N_G(V(H)) \cap Y \rvert \geq r$.
So there exists a subset $S=\{v_1,v_2,...,v_{\lvert S \rvert}\}$ of $V(H)$ with $\lvert S \rvert \leq r$ such that $|Y \cap N_G(S)| \geq r$ and $Y \cap N_G(v_{i+1})-\bigcup_{j=1}^i N_G(v_j) \neq \emptyset$ for each $i$ with $0 \leq i \leq \lvert S \rvert-1$.
Since $H$ is a $(v,Y,k,r)$-span, for every $u \in S$, there exists a path $P_u$ in $H \subseteq G-Y$ from $v$ to $u$ of length at most $k$.
Hence $\bigcup_{u \in S}P_u$ is a $(v,Y,k,r)$-span and is a subgraph of $H$.
By the minimality of $H$, $H=\bigcup_{u \in S}P_u$.
Therefore, $H$ is a union of $\lvert S \rvert \leq r$ paths in $G-Y$ where each path starts from $v$ and is of length at most $k$.
Note that $\lvert V(P_u)-\{v\} \rvert \leq k$, so $\lvert V(H) \rvert \leq 1+rk$.
\end{proof}

Now we are ready to state and prove Lemma \ref{stronger better path island shallow}.

\begin{lemma} \label{stronger better path island shallow}
For any $r,t \in {\mathbb N}$, nonnegative integer $\ell$, positive real numbers $k,k'$, and nonnegative real number $\beta$, there exists a real number $d$ such that for every graph $G$, either
	\begin{enumerate}
		\item the average degree of $G$ is greater than $k$,
		\item there exists a graph $H$ of average degree greater than $k'$ such that some subgraph of $G$ is isomorphic to a $[2\ell+1]$-subdivision of $H$,
		\item $G$ contains $K_{r,t}$ as an $\ell$-shallow minor, or 
		\item there exist $X,Z \subseteq V(G)$ with $Z \subseteq X$ and $\lvert Z \rvert > \beta\lvert V(G)-X \rvert$ such that
			\begin{enumerate}
				\item every vertex in $X$ has degree at most $d$ in $G$, 
				\item for any distinct $z,z' \in Z$, the distance in $G[X]$ between $z,z'$ is at least $\ell+1$, 
				\item for every $z \in Z$ and $u \in X$ whose distance from $z$ in $G[X]$ is at most $\ell$, $\lvert N_G(u) -X \rvert  \leq r-1$, and 
				\item $\lvert N_G(N_{G[X]}^{\leq \ell-1}[z])-X \rvert \leq r-1$ for every $z \in Z$.
			\end{enumerate}
	\end{enumerate}
\end{lemma}

\begin{proof}
Let $r,t \in {\mathbb N}$, $\ell$ be a nonnegative integer, $k,k'$ be positive real numbers, and $\beta$ be a nonnegative real number.
Let $\alpha$ be the one in Lemma \ref{r-adherent size} by taking $r=r$, $t=t$ and $k'=k'$.
Let $\gamma=\beta+(\ell r+1)\alpha$. 
Define $d=(1+(1+\gamma)^{(r+1)^\ell})k$.

Let $G$ be a graph.
Assume that the average degree of $G$ is at most $k$, and assume that there exists no graph $H$ of average degree greater than $k'$ such that some subgraph of $G$ is isomorphic to a $[2\ell+1]$-subdivision of $H$.
Assume that $G$ does not contain $K_{r,t}$ as an $\ell$-shallow minor.

For any $Y \subseteq V(G)$ and $v \in V(G)-Y$, we define the {\it $Y$-correlation of $v$} to be the sequence $(a_0,a_1,a_2,...,a_{\ell-1})$, where $a_i = \lvert Y \cap N_G(N_{G-Y}^{\leq i}[v]) \rvert$ for each $i$ with $0 \leq i \leq \ell-1$.
Note that if some entry of the $Y$-correlation of a vertex $v$ is at least $r$, then there exists a $(v,Y,\ell,r)$-span.
Observe that the $Y$-correlation of $v$ is an empty sequence when $\ell=0$.

Let $X_0$ be the set of the vertices of $G$ of degree at most $d$, and let $Y_0=V(G)-X_0$.
We use the following iterative procedure for each step $i \geq 0$ to define the vertex partition $V(G) = X_i \cup Y_i$. 
For each $i \geq 0$, we define the following.
	\begin{itemize}
		\item Define $\C_i$ to be a maximal collection of pairwise disjoint subgraphs of $G[X_i]$, where each member of $\C_i$ is a minimal $(v,Y_i,\ell,r)$-span for some vertex $v \in X_i$ satisfying that if $\ell \geq 1$, then $\lvert Y_i \cap N_G(N_{G-Y_i}^{\leq \ell-1}[v]) \rvert \geq r$. 
		\item $D_i = \bigcup_{H \in \C_i}V(H)$.
		\item $Z_i$ is a maximal subset of $X_i-D_i$ such that
			\begin{itemize}
				\item for any two distinct vertices in $Z_i$, their distance in $G[X_i]$ is at least $\ell+1$, and 
				\item for every $z \in Z_i$, $N_{G[X_i-D_i]}^{\leq \ell-1}[z] \cap N_G(D_i)=\emptyset$.
			\end{itemize}
		\item $X_{i+1} = X_i - (Z_i \cup D_i)$.
		\item $Y_{i+1} = Y_{i} \cup Z_i \cup D_i$.
	\end{itemize}
Note that $\{X_i, Y_i\}$ is a partition of $V(G)$ for every $i \geq 0$, where $X_i$ or $Y_i$ is possibly empty.

\begin{claim} \label{claim:1} For every nonnegative integer $i$ and $z \in Z_i$, 
	\begin{itemize}
		\item $\lvert N_G(N_{G[X_i]}^{\leq \ell-1}[z])-X_i \rvert \leq r-1$, and 
		\item if $u$ is a vertex in $X_i$ such that the distance in $G[X_i]$ from $z$ to $u$ is at most $\ell$, then $\lvert N_G(u)-X_i \rvert \leq r-1$.
	\end{itemize}
\end{claim}	

\noindent{\bf Proof of Claim \ref{claim:1}:}
We first suppose $N_{G[X_i]}^{\leq \ell-1}[z] \neq N_{G[X_i-D_i]}^{\leq \ell-1}[z]$.
So there exists $v' \in N_{G[X_i]}^{\leq \ell-1}[z]-N_{G[X_i-D_i]}^{\leq \ell-1}[z]$.
This means that there exists a path in $G[X_i]$ of length at most $\ell-1$ from $z$ to $v'$ intersecting $D_i$.
Hence there exists $v \in X_i$ such that there exists a path $P_v$ in $G[X_i]$ of length at most $\ell-1$ from $z$ to $v$ intersecting $D_i$.
We may assume that $v$ is chosen such that $\lvert V(P_v) \rvert$ is as small as possible.
Hence $P_v$ is internally disjoint from $D_i$.
Since $z \in Z_i$, $z \not \in D_i$.
So $v \in D_i$ and $P_v$ contains at least two vertices.
Let $v''$ be the neighbor of $v$ in $P_v$.
Since $P_v$ is internally disjoint from $D_i$ and $z \not \in D_i$, it follows that $v'' \in N_{G[X_i-D_i]}^{\leq \ell-1}[z] \cap N_G(v) \subseteq N_{G[X_i-D_i]}^{\leq \ell-1}[z] \cap N_G(D_i)$.
However, since $z \in Z_i$, by the definition of $Z_i$,  $N_{G[X_i-D_i]}^{\leq \ell-1}[z] \cap N_G(D_i)=\emptyset$, a contradiction.

Hence 
\begin{equation} N_{G[X_i]}^{\leq \ell-1}[z] = N_{G[X_i-D_i]}^{\leq \ell-1}[z].  \label{eqn:nbhdequal}
\end{equation}
So $N_{G[X_i]}^{\leq \ell-1}[z] \cap N_G(D_i)=\emptyset$.
Since $z \not \in D_i$, $N_{G[X_i]}^{\leq \ell-1}[z] \cap D_i=\emptyset$.

Suppose $\lvert N_G(N_{G[X_i]}^{\leq \ell-1}[z])-X_i \rvert \geq r$.
Since $\{X_i,Y_i\}$ is a partition of $V(G)$, $\lvert N_G(N_{G-Y_i}^{\leq \ell-1}[z]) \cap Y_i \rvert = \lvert N_G(N_{G[X_i]}^{\leq \ell-1}[z]) \cap Y_i \rvert \geq r$. 
Therefore $G[N_{G - Y_i}^{\leq \ell -1}[z]] = G[N_{G[X_i]}^{\leq \ell-1}[z]]$ is a $(z, Y_i, \ell, r)$-span in $G$; it is disjoint from members in $\mathcal{C}_i$ since $N_{G[X_i]}^{\leq \ell-1}[z] \cap D_i=\emptyset$.
Hence there exists a minimal $(z,Y_i,\ell,r)$-span $H'$ in $G$ such that $V(H') \subseteq N_{G[X_i]}^{\leq \ell-1}[z]$.  
But $V(H') \cap D_i \subseteq N_{G[X_i]}^{\leq \ell-1}[z] \cap D_i=\emptyset$, contradicting the maximality of $\C_i$.

Therefore, $\lvert N_G(N_{G[X_i]}^{\leq \ell-1}[z])-X_i \rvert \leq r-1$.

Let $u$ be a vertex in $X_i$ such that the distance in $G[X_i]$ from $z$ to $u$ is at most $\ell$.
Suppose $u \in D_i$.
Since $z \not \in D_i$, there exists a vertex $u' \in X_i \cap N_G(u)$ such that the distance in $G[X_i]$ from $z$ to $u'$ is at most $\ell-1$.
So $u' \in N_{G[X_i]}^{\leq \ell-1}[z] \cap N_G(u) \subseteq N_{G[X_i]}^{\leq \ell-1}[z] \cap N_G(D_i) = N_{G[X_i-D_i]}^{\leq \ell-1}[z] \cap N_G(D_i)$ by (\ref{eqn:nbhdequal}).
But $z \in Z_i$, so $N_{G[X_i-D_i]}^{\leq \ell-1}[z] \cap N_G(D_i)=\emptyset$, a contradiction.

Hence $u \not \in D_i$.
If $\lvert N_G(u)-X_i \rvert \geq r$, then the graph consisting of the vertex $u$ is a minimal $(u,Y_{i},0,r)$-span (and hence a minimal $(u,Y_{i},\ell,r)$-span), so $u$ is contained in $D_i$ by the maximality of $\C_{i}$, a contradiction.
So $\lvert N_G(u)-X_i \rvert \leq r-1$.
$\Box$

\medskip

If there exists a nonnegative integer $i^*$ such that $\lvert Z_{i^*} \rvert > \beta \lvert V(G)-X_{i^*} \rvert$, then by defining $X=X_{i^*}$ and $Z=Z_{i^*}$, we know that $\lvert Z \rvert > \beta \lvert V(G)-X \rvert$, and every vertex in $X \subseteq X_0$ has degree at most $d$ in $G$; statements 4(b)-4(d) follow from the definition of $Z_{i^*}$ and Claim \ref{claim:1}, so Statement 4 holds.

So we may assume that $\lvert Z_i \rvert \leq \beta \lvert V(G)-X_i \rvert=\beta \lvert Y_i \rvert$ for every nonnegative integer $i$. 

Since $X_{i+1} \subseteq X_i$, we have for any $v \in X_{i+1}$ and any $0 \leq j \leq \ell-1$,
\begin{equation}
  N_{G[X_{i+1}]}^{\leq j}[v] \subseteq N_{G[X_i]}^{\leq j}[v]. \label{eq:nbhdinclusion}
\end{equation}
Note that for every $v \in X_{i+1}$, if there exists an integer $j$ with $0 \leq j \leq \ell-1$ such that $N_{G[X_{i+1}]}^{\leq j}[v] = N_{G[X_i]}^{\leq j}[v]$, then it is easy to see that for every $u \in N_{G[X_{i+1}]}^{\leq j}[v]$, the distance between $u$ and $v$ in $G[X_{i+1}]$ is the same as the distance between $u$ and $v$ in $G[X_i]$ by induction on the distance between $u$ and $v$ in $G[X_{i+1}]$, and hence for every integer $j'$ with $0 \leq j' \leq j$, $N_{G[X_{i+1}]}^{\leq j'}[v] = N_{G[X_i]}^{\leq j'}[v]$.

\begin{claim} \label{claim:2-1}
Let $i$ be a nonnegative integer, and let $v$ be a vertex in $X_{i+1}$.
Denote the $Y_{i+1}$-correlation of $v$ by $(a_0,a_1,...,a_{\ell-1})$, and denote the $Y_i$-correlation of $v$ by $(b_0,b_1,...,b_{\ell-1})$. 
If there exists an integer $k \leq \ell -1$ such that $N_{G[X_{i+1}]}^{\leq k}[v] \subsetneq N_{G[X_{i}]}^{\leq k}[v]$, and $N_{G[X_{i+1}]}^{\leq j}[v]= N_{G[X_{i}]}^{\leq j}[v]$ for every $0 \leq j < k$, then $(a_0, a_1, \dots, a_{k-1})$ is strictly greater than $(b_0, b_1, \dots, b_{k-1})$ in the lexicographic order.
\end{claim}

\noindent{\bf Proof of Claim \ref{claim:2-1}:}
Since $N_{G[X_{i+1}]}^{\leq k}[v] \subsetneq N_{G[X_{i}]}^{\leq k}[v]$, $k \geq 1$.

Since $X_{i+1} \subseteq X_i$, $Y_i \subseteq Y_{i+1}$.
By the condition of this claim, $N_G(N_{G[X_i]}^{\leq j}[v]) = N_G(N_{G[X_{i+1}]}^{\leq j}[v])$ for every integer $j$ with $0 \leq j \leq k-1$. 
So for every $j$ with $0 \leq j \leq k-1$, $Y_i \cap N_G(N_{G[X_i]}^{\leq j}[v])  \subseteq Y_{i+1} \cap N_G(N_{G[X_{i+1}]}^{\leq j}[v])$, and hence $a_j \geq b_j$.

Let $u$ be an arbitrary vertex in $N_{G[X_{i}]}^{\leq k}[v] - N_{G[X_{i+1}]}^{\leq k}[v]$. 
By the condition of this claim,  $N_{G[X_i]}^{\leq k-1}[v] = N_{G[X_{i+1}]}^{\leq k-1}[v]$. 
So 
\begin{equation}
u \in N_{G[X_{i}]}^{\leq k}[v] = N_{G[X_i]}[N_{G[X_i]}^{\leq k-1}[v]] =N_{G[X_i]}[N_{G[X_{i+1}]}^{\leq k-1}[v]]. \label{eq:inter1}
\end{equation}
Hence the distance between $u$ and $v$ in $G[X_{i+1} \cup \{u\}]$ is at most $k$. 
Since $u \not \in N_{G[X_{i+1}]}^{\leq k}[v]$, $u \in X_i-X_{i+1} = Y_{i+1}-Y_i$.
Together with (\ref{eq:inter1}), $u \in Y_{i+1} \cap N_G(N_{G[X_{i+1}]}^{\leq k-1}[v]) - Y_i$.

Since $N_G(N_{G[X_i]}^{\leq k-1}[v]) = N_G(N_{G[X_{i+1}]}^{\leq k-1}[v])$, we have $Y_i \cap N_G(N_{G[X_i]}^{\leq k-1}[v])  \subseteq Y_{i+1} \cap N_G(N_{G[X_{i+1}]}^{\leq k-1}[v])$.
Recall that $a_{k-1} = \lvert Y_{i+1} \cap N_G(N_{G[X_{i+1}]}^{\leq k-1}[v]) \rvert$ and $b_{k-1}= \lvert Y_i \cap N_G(N_{G[X_i]}^{\leq k-1}[v]) \rvert$.
Since $u \in Y_{i+1} \cap N_G(N_{G[X_{i+1}]}^{\leq k-1}[v]) - Y_i$, $a_{k-1}>b_{k-1}$.
Therefore, $(a_0,a_1,...,a_{k-1})>(b_0,b_1,...,b_{k-1})$. 
$\Box$

\begin{claim}\label{claim:2-2}
Let $i$ be a nonnegative integer, and let $v$ be a vertex in $X_{i+1}$.
Denote the $Y_{i+1}$-correlation of $v$ by $(a_0,a_1,...,a_{\ell-1})$, and denote the $Y_i$-correlation of $v$ by $(b_0,b_1,...,b_{\ell-1})$. 
If $\ell \geq 1$ and $N_{G[X_{i+1}]}^{\leq j}[v]= N_{G[X_{i}]}^{\leq j}[v]$ for every integer $j$ with $0 \leq j \leq \ell -1$, then $(a_0, a_1, \dots, a_{\ell-1})$ is strictly greater than $(b_0, b_1, \dots, b_{\ell-1})$ in the lexicographic order.
\end{claim}

\noindent{\bf Proof of Claim \ref{claim:2-2}:}
Since $Y_{i+1} \supseteq Y_i$, and for every integer $j$ with $0 \leq j \leq \ell-1$, $N_{G[X_{i+1}]}^{\leq j}[v] = N_{G[X_i]}^{\leq j}[v]$, we have $a_j \geq b_j$ for every $j$ with $0 \leq j \leq \ell-1$. 

Since $v \in X_{i+1}$, $v \not \in Z_i \cup D_i$ by the definition of $X_{i+1}$.
Assume that there exists $v' \in Y_{i+1}-Y_i = X_i - X_{i+1}$ such that the distance in $G[X_i]$ between $v$ and $v'$ is $\ell'$ for some $0 \leq \ell'\leq \ell$, then $v' \in \left(Y_{i+1} \cap N_G(N_{G[X_{i+1}]}^{\leq \ell'-1}[v])\right) - \left( Y_{i} \cap N_G(N_{G[X_{i}]}^{\leq \ell'-1}[v])\right)$, so $a_{\ell'-1}>b_{\ell'-1}$.
Recall that $a_j \geq b_j$ for every $j$ with $0 \leq j \leq \ell-1$, so $(a_0,a_1,...,a_{\ell-1}) > (b_0,b_1,...,b_{\ell-1})$, and hence the claim follows.

Hence we may assume that there is no $v' \in Y_{i+1}-Y_i = X_i - X_{i+1}$ such that the distance in $G[X_i]$ between $v$ and $v'$ is at most $\ell$.
Equivalently, $N_{G[X_i]}^{\leq \ell}[v] \cap Y_{i+1}-Y_i=\emptyset$.
Since $D_i \cup Z_i= Y_{i+1}  - Y_i$, $N_{G[X_i]}^{\leq \ell}[v] \cap (D_i \cup Z_i)=\emptyset$. 
Therefore any vertex in $X_i$ whose distance to $v$ in $G[X_i]$ is at most $\ell$ is not in $D_i \cup Z_i$. 
In other words, for any $j \leq \ell$, we have $N_{G[X_i]}^{\leq j}[v]= 
N_{G[X_i - (D_i \cup Z_i)]}^{\leq j}[v] $. Thus $N_{G[X_{i+1}]}^{\leq \ell-1}[v]=  N_{G[X_i - (D_i \cup Z_i)]}^{\leq \ell-1}[v] =N_{G[X_{i}-D_i]}^{\leq \ell-1}[v]$. 
Since $v \in X_{i+1}$, $v \not \in Z_i$.
Since $N_{G[X_i]}^{\leq \ell}[v] \cap (D_i \cup Z_i)=\emptyset$, by the maximality of $Z_i$, $N_{G[X_{i}-D_i]}^{\leq \ell-1}[v] \cap N_G(D_i) \neq \emptyset$.
So there exists $x \in N_{G[X_{i}-D_i]}^{\leq \ell-1}[v] \cap N_G(D_i) = N_{G[X_{i+1}]}^{\leq \ell-1}[v] \cap N_G(D_i)$.
Hence there exists $y \in D_i \cap N_G(x)$.
Since $y \in N_G(x)$, $y \in N_G(N_{G[X_{i+1}]}^{\leq \ell-1}[v])$.
Since $y \in D_i$, $y \in Y_{i+1}-Y_i$.
So $(Y_{i+1}-Y_i) \cap N_G(N_{G[X_{i+1}]}^{\leq \ell-1}[v]) \neq \emptyset$.
Therefore, $a_{j^*}>b_{j^*}$ for some $j^*$ with $0 \leq j^* \leq \ell-1$.
Recall that $a_j \geq b_j$ for every $j$ with $0 \leq j \leq \ell-1$.
So $(a_0,a_1,...,a_{\ell-1})>(b_0,b_1,...,b_{\ell-1})$. \hfill
$\Box$

\begin{claim}\label{claim:3} 
Let $i$ be a nonnegative integer, and let $v$ be a vertex in $X_{i+1}$.
Denote the $Y_{i+1}$-correlation of $v$ by $(a_0,a_1,...,a_{\ell-1})$, and denote the $Y_i$-correlation of $v$ by $(b_0,b_1,...,b_{\ell-1})$. 
If there exists $k$ with $0 \leq k \leq \ell-1$ such that $b_k \geq r$ and $b_j<r$ for every $0 \leq j \leq k-1$, then $(a_0,a_1,...,a_{k-1})$ is strictly greater than $(b_0,b_1,...,b_{k-1})$ in the lexicographic order.
\end{claim}

\noindent{\bf Proof of Claim \ref{claim:3}:}
If there exists an integer $j^*$ with $0 \leq j^* \leq k$ such that $N_{G[X_{i+1}]}^{\leq j^*}[v] \subsetneq N_{G[X_{i}]}^{\leq j^*}[v]$, then by Claim \ref{claim:2-1}, $(a_0,a_1,...,a_{k-1})$ is strictly greater than $(b_0,b_1,...,b_{k-1})$.
So by (\ref{eq:nbhdinclusion}), we may assume that $N_{G[X_{i+1}]}^{\leq j}[v] = N_{G[X_{i}]}^{\leq j}[v]$ for every $j$ with $0 \leq j \leq k$.
In particular, $N_{G[X_{i+1}]}^{\leq k}[v] = N_{G[X_{i}]}^{\leq k}[v]$. 

Since $Y_i \subseteq Y_{i+1}$, for any $j$ with $0 \leq j \leq k-1$, $N_G( N_{G[X_i]}^{\leq j}[v]) \cap Y_i = N_G( N_{G[X_{i+1}]}^{\leq j}[v]) \cap Y_i \subseteq  N_G( N_{G[X_{i+1}]}^{\leq j}[v] ) \cap Y_{i+1}$. 
So $a_j \geq b_j$ for every $j$ with $0 \leq j \leq k-1$.

Since $b_k \geq r$, there exists a minimal $(v,Y_i,k,r)$-span $Q$ in $G[X_i]$ with $V(Q) \subseteq N_{G[X_i]}^{\leq k}[v]$.
Since $v \in X_{i+1}$ and $v \in V(Q)$, we have $Q \not \in \C_i$.
By the maximality of $\C_i$, there exists a member $M$ of $\C_i$ intersecting $Q$. 
Since $v \in X_{i+1}$, $v \not \in V(M)$.
So $\emptyset \neq V(M) \cap V(Q) \subseteq V(M) \cap N_{G[X_i]}^{\leq k}[v]-\{v\}$.
Hence there exists an integer $k'$ with $1 \leq k' \leq k$ such that $V(M) \cap N_{G[X_i]}^{\leq k'}[v] - N_{G[X_i]}^{\leq k'-1}[v] \neq \emptyset$.
Recall that $N_{G[X_{i+1}]}^{\leq j}[v] = N_{G[X_{i}]}^{\leq j}[v]$ for every $j$ with $0 \leq j \leq k$, so $\emptyset \neq V(M) \cap N_{G[X_i]}^{\leq k'}[v]-N_{G[X_i]}^{\leq k'-1}[v] = V(M) \cap N_{G[X_{i+1}]}^{\leq k'}[v]-N_{G[X_{i+1}]}^{\leq k'-1}[v] \subseteq V(M) \cap N_G(N_{G[X_{i+1}]}^{\leq k'-1}[v])$.
Together with the fact that $V(M) \subseteq D_i \subseteq Y_{i+1}-Y_i$, we have $\emptyset \neq N_G(N_{G[X_{i+1}]}^{\leq k'-1}[v] ) \cap Y_{i+1} - (N_G(N_{G[X_i]}^{\leq k'-1}[v]) \cap Y_i)$.
Hence $a_{k'-1}>b_{k'-1}$.
Therefore, $(a_0,a_1,...,a_{k-1})$ is strictly greater than $(b_0,b_1,...,b_{k-1})$.
$\Box$

\begin{claim}\label{claim:4}
$X_0 \subseteq Y_{(r+1)^{\ell}}$.
\end{claim}

\noindent{\bf Proof of Claim \ref{claim:4}:}
We first assume $\ell=0$.
Then for every two distinct vertices in $X_0-D_0$, their distance in $G[X_0]$ is at least $1=\ell+1$.
And for every $z \in X_0-D_0$, $N_{G[X_0-D_0]}^{\leq \ell-1}[z]=N_{G[X_0-D_0]}^{\leq -1}[z] = \emptyset$.
So $Z_0=X_0-D_0$.
Hence $Z_0 \cup D_0=X_0$.
So $X_0 \subseteq Y_1 =  Y_{(r+1)^{0}}=Y_{(r+1)^{\ell}}$.

Hence we may assume $\ell \geq 1$.
Let $v \in X_0$.
We shall show that there exists a nonnegative integer $i_v$ such that $v \in Y_{i_v}$ and show $i_v \leq (r+1)^\ell$. 

For each nonnegative integer $i$, if $v$ is in $X_{i}$, then let $a^{(i)} = (a_0^{(i)}, a_1^{(i)}, \dots, a_{\ell -1}^{(i)})$ be the $Y_{i}$-correlation of $v$. 
By Claims \ref{claim:2-1} and \ref{claim:2-2} and (\ref{eq:nbhdinclusion}), for every nonnegative integer $i$, if $v \in X_{i+1}$, then $a^{(i+1)} > a^{(i)}$ in the lexicographic order. 
So if $v \in X_{i+1}$, then one entry in $a^{(i)}$ will increase its value by at least one. 
By Claim \ref{claim:3}, if $v \in X_{i+1}$ and there exists $j$ with $0 \leq j\leq \ell -1$ such that the entry $a_j^{(i)} \geq r$ while $a_{j'}^{(i)} < r$ for all $0 \leq j' < j$, then $a^{(i+1)}_{j'}>a^{(i)}_{j'}$ for some $j' < j$. 

Therefore, there exists a nonnegative integer $i_v$ with $i_v \leq r \cdot (r+1)^{\ell-1}$ such that either $v \in Y_{i_v}$ or $a_0^{(i_v)} \geq r$. 
Note that if $v \not \in Y_{i_v}$, then $a_0^{(i_v)} \geq r$, so $\lvert N_G(v) \cap Y_{i_v} \rvert \geq r$.
So when $v \not \in Y_{i_v}$, the graph consists of the vertex $v$ is a $(v,Y_{i_v},0,r)$-span, so $v$ is contained in some member of $\C_{i_v}$ by the maximality of $\C_{i_v}$, and hence $v \in Y_{i_v+1} \subseteq Y_{(r+1)^{\ell}}$.
Therefore, $X_0 \subseteq Y_{(r+1)^{\ell}}$. 
$\Box$

\medskip

Recall that we assume $\lvert Z_i \rvert \leq \beta \lvert Y_i \rvert$ for every nonnegative integer $i$.
By Lemma \ref{r-adherent size}, $\lvert \C_i \rvert \leq \alpha \lvert Y_{i} \rvert$ for every nonnegative integer $i$.
For every nonnegative integer $i$, since each member $T$ of $\C_i$ is a minimal $(v,Y_i,\ell,r)$-span, it contains at most $\ell r+1$ vertices by Lemma \ref{claim:number of vtx in minimal span}.
So for every nonnegative integer $i$, 
\begin{align*} \lvert Y_{i+1}-Y_i \rvert = & \  |Z_i| + \sum_{T \in \C_i} |V(T)| \\
\leq & \ \lvert Z_i \rvert + \lvert \C_i \rvert \cdot (\ell r+1) \leq (\beta+\alpha \cdot (\ell r+1)) \lvert Y_i \rvert = \gamma \lvert Y_i \rvert.
\end{align*}
Hence $\lvert Y_{i+1} \rvert \leq (1+\gamma)\lvert Y_i \rvert$ for every nonnegative integer $i$.
Therefore, $\lvert Y_i \rvert \leq (1+\gamma)^i \lvert Y_0 \rvert$ for every nonnegative $i$.
By Claim \ref{claim:4}, $\lvert X_0 \rvert \leq \lvert Y_{(r+1)^\ell} \rvert \leq (1+\gamma)^{(r+1)^\ell}\lvert Y_0 \rvert$.
Since $V(G)=X_0 \cup Y_0$, \[ \lvert Y_0 \rvert \geq \frac{1}{1+(1+\gamma)^{(r+1)^\ell}}\lvert V(G) \rvert.\]
Therefore, $\sum_{v \in V(G)} \deg_G(v) \geq \sum_{v \in Y_0} \deg_G(v) > d \lvert Y_0 \rvert \geq \frac{d}{1+(1+\gamma)^{(r+1)^\ell}}\lvert V(G) \rvert = k \lvert V(G) \rvert$, which implies that the average degree of $G$ is greater than $k$, a contradiction.
This proves the lemma.
\end{proof}

We remark that the main lemma in the work of Ossona de Mendez, Oum, and Wood \cite[Lemma 2.2]{oow} is implied by the case $(\ell,\beta)=(0,0)$ of Lemma \ref{stronger better path island shallow} (up to the constant $d$).

\section{Existence of good collections} \label{sec:proof-of-C}

We prove Lemmas \ref{weak general collection} and \ref{minor collection}, which will provide the correct value $q$ for Lemma \ref{lem:coverCandp}. Recall the definitions of good signatures in Definitions \ref{def:goodsig} and \ref{def:goodsig2}. 

\begin{lemma} \label{weak general collection}
For every positive integer $r$ and graph $H$, there exists a constant $c = c(r, H)>0$ such that for every $H$-minor free graph $G$, there exists a $(c,r,r)$-good signature collection for $G$.
\end{lemma}

\begin{proof}
Let $r$ be a positive integer and let $H$ be a graph.
By \cite{Mader}, there exists a real number $k$ such that every graph of average degree at least $k$ contains $H$ as a minor.
Define $c={k \choose r}$.
	
Let $G$ be an $H$-minor free graph.
Since $G$ has no $H$-minor, the average degree of $G$ is less than $k$.
So there exists a vertex $z^*$ of $G$ of degree less than $k$.
Let $Z=\{z^*\}$.
Then this lemma immediately follows from Lemma \ref{sufficient good collection} by taking $a=k$, $t=1$ and $\xi=1$.
\end{proof}

Recall that as shown in Lemma \ref{lem:coverCandp}, we want the value $q$ to be as large as possible.
Lemma \ref{weak general collection} provides such a value $q$ and will be used in the proof of our main results.
But it is not strong enough to prove our other results, and in fact it uses almost no structural information for $H$-minor free graphs.
We shall provide a better value for $q$ by looking into the structure of $H$-minor free graphs.
This is the purpose of Lemma \ref{minor collection}, and the rest of this section is dedicated to a proof of it. 

We first sketch the ideas of the proof of Lemma \ref{minor collection}.
Recall that by Lemma \ref{sufficient good collection}, to obtain a good signature collection, it suffices to find a set $Z$ of vertices of bounded maximum degree such that every subgraph with minimum degree at least $r$ containing a particular vertex in $Z$ must contain many vertices in $Z$.
Lemma \ref{stronger better path island shallow} already provides a preliminary form of such $Z$: it shows that either $G$ contains a certain graph as a minor (Statements 1-3), or there exists a set $Z$ of vertices contained in another set $X$ of vertices with bounded maximum degree such that if we visit vertices by going from any vertex $z \in Z$ along paths with bounded length in $G[X]$, then the total number of neighbors outside $X$ of those vertices we visited is small (Statement 4).
If we can show that the total number of neighbors is 0, then every subgraph with large minimum degree containing $z$ must contain many vertices in $Z$, so we are done.
However, it is too good to be true in general. 
So we shall bootstrap Lemma \ref{stronger better path island shallow} to obtain Lemma \ref{better path island shallow 2} to show that either we can ensure every subgraph with large minimum degree containing $z$ contains many vertices in $Z$ (Statement 4(b) in Lemma \ref{better path island shallow 2}), or we obtain something similar so that we can adapt the proof of Lemma \ref{sufficient good collection} to obtain a good signature collection (Statement 4(c)ii in Lemma \ref{better path island shallow 2}), or there exists an obstruction that looks like the graphs that give the upper bound of the thresholds (Statement 4(c)i in Lemma \ref{better path island shallow 2}).
Then in Lemmas \ref{subgraph one vertex collection} and \ref{minor collection}, we show how to adapt the proof of Lemma \ref{sufficient good collection} to obtain a desired good signature collection.

Before we proof Lemma \ref{better path island shallow 2}, we recall that for graphs $G$ and $H$ and a nonnegative integer $r$, ${\mathcal F}(G,H,r)$ is the set consisting of the graphs that can be obtained from a disjoint union of $G$ and $H$ by adding edges between $V(G)$ and $V(H)$ such that every vertex in $V(H)$ has degree at least $r$.
For a graph $W$ in ${\mathcal F}(G,H,r)$, the {\it type} of $W$ is the number of edges of $W$ incident with $V(H)$, and the {\it heart} of $W$ is $V(G)$.

\begin{lemma} \label{better path island shallow 2}
For any $r,t,t' \in {\mathbb N}$, $w \in {\mathbb Z}$ with $r \geq w \geq 0$, nonnegative integer $s_0$ and positive real numbers $k,k'$, there exists an integer $d$ such that 
for every graph $G$, either
	\begin{enumerate}
		\item the average degree of $G$ is greater than $k$, 
		\item there exists a graph $H$ of average degree greater than $k'$ such that some subgraph of $G$ is isomorphic to a $[4s_0+2w+5]$-subdivision of $H$, 
		\item $G$ contains $K_{r-w+1,t}$ as a $(2s_0+w+2)$-shallow minor, or 
		\item there exists $X \subseteq V(G)$ such that
			\begin{enumerate}
				\item every vertex in $X$ has degree at most $d$ in $G$, 
				\item there exists $v^* \in X$ such that for every subgraph $R$ of $G$ of minimum degree at least $r$ containing $v^*$, there exists a path in $G[X \cap V(R)]$ of length $w$ starting at $v^*$, and 
				\item either $X=V(G)$, or there exists a nonnegative integer $s$ with $s \leq s_0$ such that either 
					\begin{enumerate}
						\item there exists a connected graph $F_0$ such that $G$ contains $F \wedge_{t'} I$ as a subgraph for some $F \in {\mathcal F}(I_{r-w}, F_0,r)$ of type $s$, where $I$ is the heart of $F$, or 
						\item there exists $x^* \in X$ such that for every subgraph $R$ of $G$ of minimum degree at least $r$ containing $x^*$, there exists a connected subgraph $F$ of $R[X \cap V(R)]$ containing $x^*$ such that the number of edges in $R$ incident with $V(F)$ is at least $s_0+1$. 
					\end{enumerate}
			\end{enumerate}
	\end{enumerate}
\end{lemma}

\begin{proof}
Let $r,t,t' \in {\mathbb N}$, $w \in {\mathbb Z}$ with $r \geq w \geq 0$, $s_0 \in {\mathbb Z}$ with $s_0 \geq 0$, and $k,k'$ be positive real numbers.
Let $\beta = (s_0+1)^2 \cdot 2^{{s_0+1 \choose 2}} \cdot (r-w+1)(t'-1 + r -w) \left(\frac{k'}{2}+\binom{k'}{\lfloor k'/2 \rfloor} \cdot  t' \cdot 2^{(r-w)(s_0+1)}\right)t' \cdot 2^{(r-w)(s_0+1)}$. 
Define $d$ to be the integer mentioned in Lemma \ref{stronger better path island shallow} by taking $(r,t,\ell,k,k',\beta)=(r-w+1,t,2s_0+w+2,k,k',\beta)$. 

Let $G$ be a graph.
Suppose that Statements 1-3 of this lemma do not hold.
So by Lemma \ref{stronger better path island shallow}, there exist $X,Z \subseteq V(G)$ with $Z \subseteq X$ and $\lvert Z \rvert > \beta\lvert V(G)-X \rvert$ such that 
	\begin{itemize}
		\item[(i)] every vertex in $X$ has degree at most $d$ in $G$, 
		\item[(ii)] for any distinct pair of vertices in $Z$, the distance in $G[X]$ between them is at least $2s_0+w+3$, 
		\item[(iii)] for every $z \in Z$ and $u \in X$ whose distance from $z$ in $G[X]$ is at most $2s_0+w+2$, $\lvert N_G(u)-X \rvert \leq r-w$, and
		\item[(iv)] $\lvert N_G(N_{G[X]}^{\leq 2s_0+w+1}[z])-X \rvert \leq r-w$ for every $z \in Z$.
	\end{itemize}

We shall prove that Statement 4 of this lemma holds.
Statement 4(a) immediately follows from (i).

We first prove Statement 4(b).
Let $v^*$ be any vertex in $Z$.
Suppose to the contrary that there exists a subgraph $R$ of $G$ of minimum degree at least $r$ containing $v^*$ such that the longest path $P$ in $R[X \cap V(R)]$ starting at $v^*$ has length at most $w-1$.
For every vertex $v \in V(P)$, $\lvert N_R(v) \cap X -V(P)  \rvert \geq \lvert N_R(v) \rvert - \lvert N_R(v)-X \rvert - \lvert N_R(v) \cap V(P) \rvert \geq \lvert N_R(v) \rvert - \lvert N_G(v)-X \rvert - (\lvert V(P) \rvert-1) \geq r-(r-w)-(w-1) =1$ where the last inequality follows from (iii) by taking $(z, u)$ in (iii) to be $(v^*, v)$. 
So $P$ is not a longest path in $R$ starting at $v^*$ since if $v$ is the other end of $P$, then we can extend $P$ by concatenating a vertex in $N_R(v) \cap X-V(P)$. 
This leads to a contradiction.
Since $R[X \cap V(R)] \subseteq G[X \cap V(R)]$, Statement 4(b) is proved.

Now we prove Statement 4(c). 
We may assume that $X \neq V(G)$, for otherwise we are done.
Assume 4(c)ii does not hold.
We shall show 4(c)i holds.

For every $z \in Z$ and every subgraph $R$ of $G$ of minimum degree at least $r$ containing $z$, define $s_{R,z}$ to be the number of edges of $R$ incident with the vertices in the component of $R[V(R) \cap X]$ containing $z$.
For every $z \in Z$, define $s_z' = \min_R s_{R,z}$, where the minimum is taken over all subgraphs $R$ of $G$ of minimum degree at least $r$ containing $z$. 
Note that for every $z \in Z$, $s_z' \geq r$ as the minimum is taken over all subgraphs of minimum degree at least $r$. 
If there exists $z \in Z$ such that $s_z' \geq s_0+1$, then Statement 4(c)ii holds by taking $x^*=z$.

So we may assume that $s_z' \leq s_0$ for every $z \in Z$.
Define $s$ to be an integer with $0 \leq s \leq s_0$ such that $\lvert \{z \in Z: s_z'=s\} \rvert$ is maximum.
Let $Z_s = \{z \in Z: s_z'=s\}$.
In particular, 
\begin{equation} \lvert Z_s \rvert \geq \frac{1}{s_0+1}\lvert Z \rvert> \frac{\beta}{s_0+1}\lvert V(G)-X \rvert. \label{eq:int2}
\end{equation}

If there is a vertex $z \in Z_s$ such that  for every subgraph $R$ of $G$ of minimum degree at least $r$ containing $z$ with $s_{R,z}=s_z'=s$, the connected component $F_{R,z}$ of $R[V(R) \cap X]$ containing $z$ contains at least $s_0+2$ vertices, then for every such $R$, the number of edges in $R$ incident with $V(F_{R,z})$ is at least $s_0+1 \geq s+1=s_{R,z}+1$, a contradiction. 
So for every $z \in Z_s$, there exists a subgraph $R_z$ of $G$ of minimum degree at least $r$ containing $z$ with $s_{R_z,z}=s$ such that the component $F_{z}$ of $R_z[V(R_z) \cap X]$ containing $z$ satisfies that 
\begin{equation} \lvert V(F_{z}) \rvert \leq s_0+1. \label{eq:sizeFz}
\end{equation}  
Since there are at most $(s_0+1) \cdot 2^{{s_0+1 \choose 2}}$ non-isomorphic labelled graphs on at most $s_0+1$ vertices, there exist a connected (labelled) graph $F$ on at most $s_0+1$ vertices and $Z_s' \subseteq Z_s$ with 
\[  \lvert Z_s' \rvert \geq \frac{\lvert Z_s \rvert}{(s_0+1) \cdot 2^{{s_0+1 \choose 2}}} > \frac{\beta}{(s_0+1)^2 \cdot 2^{{s_0+1 \choose 2}}} \lvert V(G)-X \rvert\] such that $F$ is isomorphic to each (labelled) $F_{z}$ for every $z \in Z_s'$, where the we use (\ref{eq:int2}) for the second inequality.

For every $z \in Z_s'$, since $F_z$ is connected and contains at most $s_0+1$ vertices, 
\begin{equation} V(F_z) \subseteq N_{G[X]}^{\leq s_0}[z]. \label{eq:int3}
\end{equation}
By (iv), for every $z \in Z_s'$, $\lvert N_G(N_{G[X]}^{\leq s_0}[z])-X \rvert \leq r-w$.
So there exist an integer $p$ with $0 \leq p \leq r-w$ and a set $Z_s^* \subseteq Z_s'$ with 
\begin{align} \lvert Z_s^* \rvert \geq \frac{\lvert Z_s' \rvert}{r-w+1} > \frac{\beta}{(s_0+1)^2 \cdot 2^{{s_0+1 \choose 2}} \cdot (r-w+1)} \lvert V(G)-X \rvert  \label{eq:sizeZsstar}  \\ \geq \frac{\beta}{(s_0+1)^2 \cdot 2^{{s_0+1 \choose 2}} \cdot (r-w+1)}
 \geq t'-1+r-w \nonumber
\end{align}
such that $\lvert N_G(V(F_z))-X \rvert=p$ for every $z \in Z_s^*$.

A quick remark is that,
by (ii), for distinct vertices $z_1,z_2$ in $Z_s'$, $N_{G[X]}^{\leq s_0}[z_1]$ and $N_{G[X]}^{\leq s_0}[z_2]$ are disjoint. Together with (\ref{eq:int3}), we have that 
\begin{equation} V(F_z) \cap V(F_{z'})=\emptyset. \label{eq:int4}
\end{equation}

We first assume that $p=0$.
Then for every $z \in Z_s^*$, $F_z$ is of minimum degree at least $r$ since $R$ is of minimum degree at least $r$ and $N_G(V(F_z)) \subseteq X$. 
Since $\lvert Z_s^* \rvert \geq t'+r-w$, the graphs $F_z$ for $z \in Z_s^*$ form at least $r-w+t'$ disjoint copies of $F$ in $G$. 
We just showed that $F$ is of minimum degree at least $r$. 
Let $F'$ be a disjoint union of $F$ and $r-w$ isolated vertices. 
Then $F' \in {\mathcal F}(I_{r-w},F,r)$ and is of type $s$. 
Since $G$ contains $r-w+t'$ disjoint copies of $F$, we know $G$ contains $F' \wedge_{t'} I$ where $I$ is the heart of $F'$, as we can take $t'$ disjoint copies of $F$ and one vertex in each of other $r-w$ copies of $F$.
So Statement 4(c)i holds.

So we may assume that $p \geq 1$. Recall that by the definition of $Z_s^*$, $\lvert N_G(V(F_z))-X \rvert=p$ for every $z \in Z_s^*$.

\begin{claim}\label{claim:doublecountZs}
If there is a subset $S \subseteq V(G) -X$ such that $S$ equals $N_G(V(F_z))-X$ for at least $t' \cdot 2^{p(s_0+1)}$ vertices $z \in Z_s^*$, then Statement 4(c)i holds. 
\end{claim}

\noindent{\bf Proof of Claim \ref{claim:doublecountZs}:}
For every $z \in Z^*_s$, since $|N_G(V(F_z))- X|=p$, and each of the copies $F_z$ are isomorphic (as a labelled graph), there are at most $2^{|V(F_z)| p} \leq 2^{(s_0+1)p}$ possibilities for how vertices in $F_z$ are connected in $G$ to the $p$ vertices in the set $N_G(V(F_z))-X$ by (\ref{eq:sizeFz}) and the fact that there are $|V(F_z)| p$ potential egdes between vertices in $F_z$ and $N_G(V(F_z))-X$. 

Notice that each vertex in $F_z$ has degree at least $r$ in $G[N_G[V(F_{z})]]$.
By a pigeonhole argument, if $S$ is a subset of $V(G)-X$ such that $S$ equals $N_G(V(F_z))- X$ for at least $t' \cdot 2^{p(s_0+1)}$ vertices $z \in Z_s^*$, then there are at least $t'$ vertices $z \in Z_s^*$ such that the graphs $G[S \cup V(F_z)] - E[S]$, denoted by $F'_z$, are isomorphic to a graph $F'$ as a labelled graph.  
Let $F_0$ be $F_z$ for one of these $t'$ vertices $z \in Z_s^*$. 
Then  $F' \in {\mathcal F}(I_p,F_0,r)$ and the union of $F'_z$ among these $t'$ vertices in $Z^*_s$ is a subgraph $G'$ of $G$ isomorphic to $F' \wedge_{t'} I$, where $I$ is the stable set corresponding to $V(I_p)$.
Let $G''$ be the union of $G'$ and $r-w-p$ vertices in the remaining $|Z^*_s|-t' \geq r-w$ vertices in $Z^*_s$.
Then $G''$ is isomorphic to $F'' \wedge_{t'} I''$ for some $F'' \in {\mathcal F}(I_{r-w},F_0,r)$, where $I''$ is the union of $I$ and the new $r-w-p$ vertices.
Therefore Statement 4(c)i holds. 
 \hfill
$\Box$

\begin{claim} \label{claim:manytoone}
If Statement 4(c)i does not hold, then 
$$\lvert \{N_G(V(F_z))-X: z \in Z_s^*\} \rvert \leq \left(\frac{k'}{2}+ \binom{k'}{\lfloor k'/2 \rfloor} \cdot  t' \cdot 2^{p(s_0+1)}\right) \lvert V(G)-X \rvert.$$
\end{claim}

\noindent{\bf Proof of Claim \ref{claim:manytoone}:}
By (\ref{eq:int4}), $V(F_z) \cap V(F_{z'})=\emptyset$ for distinct vertices $z_1,z_2$ in $Z_s^*$.
Starting from $G[\bigcup_{z \in Z_s^*}V(F_z) \cup (V(G)-X)]-E(G[V(G)-X])$, we obtain a graph $H'$ by repeatedly deleting all the vertices in $V(F_z)$ for some $z \in Z_s^*$ where some pair of distinct vertices $y,y'$ in $N_G(V(F_z))-X$ are non-adjacent in the current graph, and adding the edge $yy'$. 
We continue this process until for every remaining vertex $z'$ in $Z_s^*$, $N_G(V(F_z))-X$ is a clique. 

Let $H=H'[V(G)-X]$.
Since $p \geq 1$ and $V(F_z) \subseteq N_{G[X]}^{\leq s_0}[z]$ by (\ref{eq:int4}) for every $z \in Z_s^*$ (which implies any two vertices in $F_z$ can be connected in $F_z$ by a path of length at most $2s_0$), we know $G[\bigcup_{z \in Z_s^*}((N_G(V(F_z))-X) \cup V(F_z))]$ contains a $[2s_0+1]$-subdivision of $H$.
It implies that $G$ contains a $[2s_0+1]$-subdivision of any subgraph of $H$.
Since Statement 2 of this lemma does not hold, the average degree of any subgraph of $H$ is at most $k'$.

For each vertex $z \in Z_s^*$, either $V(F_z)$ has been deleted thus corresponding to a unique edge in $H$, or $V(F_z)$ survives in $H'$, in which case $N_G(V(F_z))-X$ becomes a clique of size $\lvert N_G(V(F_z))-X \rvert = p$ in $H'$, and thus also a clique of size $p$ in $H$ since $N_G(V(F_z))-X \subseteq V(G)-X$. 
There are at most $|E(H)|$ vertices in $Z_s^*$ of the first kind. 
Since the maximum average degree of $H$ is at most $k'$, $\lvert E(H) \rvert \leq \frac{k'\lvert V(H) \rvert}{2} = \frac{k'}{2}\lvert V(G)-X \rvert$. 

For the vertices in $Z'$ of the second kind, note that $N_G(V(F_{z})) - X$ is a clique of size $p$ in $H$. 
Let $c$ be the number of vertices in $Z_s^*$ of the second kind. 
By Claim \ref{claim:doublecountZs},  each $S \subseteq V(G) - X$ is the neighborhood of at most $t' \cdot 2^{p(s_0+1)}$ vertices $z \in Z_s^*$ of the second kind. 
Since each $z \in Z_s^*$ gives a clique of size $p$ in $H$ and by 
Lemma \ref{number cliques}, the number of cliques of size $p$ in $H$ is at most ${k' \choose p-1}\lvert V(G)-X \rvert \leq  \binom{k'}{\lfloor k'/2 \rfloor} \lvert V(G)-X \rvert$. 
Combining these two facts, we have $c \leq \binom{k'}{\lfloor k'/2 \rfloor} \cdot  t' \cdot 2^{p(s_0+1)} \lvert V(G)-X \rvert$. 
Therefore,
\[ \lvert \{N_G(V(F_z))-X: z \in Z_s^*\} \rvert \leq |E(H)| + c \leq \left(\frac{k'}{2}+\binom{k'}{\lfloor k'/2 \rfloor} \cdot  t' \cdot 2^{p(s_0+1)}\right) \lvert V(G)-X \rvert.\]
$\Box$

By Claim \ref{claim:manytoone}, the number of distinct sets of the form $N_G(V(F_z))-X$ for some $z \in Z_s^*$ is at most  $\left(\frac{k'}{2}+\binom{k'}{\lfloor k'/2 \rfloor} \cdot  t' \cdot 2^{p(s_0+1)}\right) \lvert V(G)-X \rvert$. 
However, by (\ref{eq:sizeZsstar}), $\lvert Z_s^* \rvert >  \frac{\beta}{(s_0+1)^2 \cdot 2^{{s_0+1 \choose 2}} \cdot (r-w+1)} \cdot \lvert V(G)-X \rvert $. 
Therefore there is a subset $S \subseteq V(G)-X$ with $\lvert S \rvert = p \leq r-w$ such that there are at least $ \left( \frac{\beta}{(s_0+1)^2 \cdot 2^{{s_0+1 \choose 2}} \cdot (r-w+1)}\right) / \left(\frac{k'}{2}+\binom{k'}{\lfloor k'/2 \rfloor} \cdot  t' \cdot 2^{p(s_0+1)}\right) \geq t' \cdot 2^{p(s_0+1)}$ vertices $z$ in $Z_s^*$ satisfying $S = N_G(F_z) -X$. 
Then Statement 4(c)i holds by Claim \ref{claim:doublecountZs}. 
This completes the proof. 
\end{proof}

We show how to adapt the proof of Lemma \ref{sufficient good collection} to obtain a desired good signature collection by using Lemma \ref{better path island shallow 2} in Lemmas \ref{subgraph one vertex collection} and \ref{minor collection}.

\begin{lemma} \label{subgraph one vertex collection}
For any $r,t,t' \in {\mathbb N}$, $w \in {\mathbb Z}$ with $r \geq w \geq 0$, nonnegative integer $s_0$ and positive real numbers $k,k'$, there exist integers $c,d$ such that for every graph $G$, either
	\begin{enumerate}
		\item the average degree of $G$ is greater than $k$, 
		\item there exists a graph $H$ of average degree greater than $k'$ such that some subgraph of $G$ is isomorphic to a $[4s_0+2w+5]$-subdivision of $H$,
		\item $G$ contains $K_{r-w+1,t}$ as a $(2s_0+w+2)$-shallow minor, or  
		\item there exists a vertex $v^*\in V(G)$ and a collection $\C^*$ of $((w+1)r-{w+1 \choose 2})$-element subsets of $E(G)$ with $\lvert \C^* \rvert \leq c$ such that for every subgraph $R$ of $G$ of minimum degree at least $r$ containing $v^*$, $E(R)$ contains some member of $\C^*$, and either 
			\begin{enumerate}
				\item every vertex of $G$ is of degree at most $d$, and there exists a vertex $x^* \in V(G)$ and a collection $\C$ of ${r+1 \choose 2}$-element subsets of $E(G)$ with $\lvert \C \rvert \leq c$ such that for every subgraph $R'$ of $G$ of minimum degree at least $r$ containing $x^*$, $E(R')$ contains some member of $\C$, or
				\item there exists a nonnegative integer $s$ with $s \leq s_0$ such that either
				\begin{enumerate}
					\item there exists a connected graph $F_0$ such that $G$ contains $F \wedge_{t'} I$ as a subgraph for some $F \in {\mathcal F}(I_{r-w}, F_0,r)$ of type $s$, where $I$ is the heart of $F$, or 
					\item there exists a vertex $x^* \in V(G)$ and a collection $\C$ of $(s_0+1)$-element subsets of $E(G)$ with $\lvert \C \rvert \leq c$ such that for every subgraph $R'$ of $G$ of minimum degree at least $r$ containing $x^*$, $E(R')$ contains some member of $\C$.
				\end{enumerate}
			\end{enumerate}
	\end{enumerate}
\end{lemma}

\begin{proof}
Let $r,t,t' \in {\mathbb N}$, $w \in {\mathbb Z}$ with $r \geq w \geq 0$, $s_0$ be a nonnegative integer, and $k,k'$ be positive real numbers.
Let $d$ be the number $d$ mentioned in Lemma \ref{better path island shallow 2} by taking $(r,t,t',w,s_0,k,k')=(r,t,t',w,s_0,k,k')$.
Define $c={d \choose r}^{r+1} \cdot (4(r+1)d)^{(r+1)^2} + {d \cdot (s_0+3)d^{s_0+2} \choose s_0+1} \cdot 2^{(s_0+3)^2d^{2s_0+4}}$.

Let $G$ be a graph.
Assume that Statements 1-3 of this lemma do not hold.
By Lemma \ref{better path island shallow 2}, there exists $X \subseteq V(G)$ such that 
	\begin{itemize}
		\item[(i)] every vertex in $X$ has degree at most $d$ in $G$, 
		\item[(ii)] there exists $v^* \in X$ such that for every subgraph $R$ of $G$ of minimum degree at least $r$ containing $v^*$, there exists a path $Q_R$ in $G[X \cap V(R)]$ of length $w$ starting at $v^*$, and
		\item[(iii)] either $X=V(G)$, or there exists an integer $s$ with $0 \leq s \leq s_0$ such that either 
			\begin{itemize}
				\item[(C1)] there exists a connected graph $F_0$ such that $G$ contains $F \wedge_{t'} I$ as a subgraph for some $F \in {\mathcal F}(I_{r-w}, F_0,r)$ of type $s$, where $I$ is the heart of $F$, or 
				\item[(C2)] there exists $x^* \in X$ such that for every subgraph $R$ of $G$ of minimum degree at least $r$ containing $x^*$, there exists a connected subgraph $F$ of $R[X \cap V(R)]$ containing $x^*$ such that the number of edges in $R$ incident with $V(F)$ is at least $s_0+1$.
			\end{itemize}
	\end{itemize}

We shall show Statement 4 of this lemma holds.
For every $v \in X$, let $\C_v$ be the collection of all $r$-element subsets of $E(G)$ such that each of the $r$ edges is incident with $v$.
Since every vertex in $X$ has degree at most $d$ in $G$, $\lvert \C_v \rvert \leq {d \choose r}$ for every $v \in X$.

For every subgraph $Q$ in $G[X]$, let 
\[ \C_Q=\{\bigcup_{v \in V(Q)}T_v: (T_v \in \C_v: v \in V(Q))\}.\] 
In other words, each member of $\C_Q$ is a union of $|V(Q)|$ sets where each of them consists of $r$ edges incident with a vertex of $Q$ and no two distinct sets are corresponding to the same vertex of $Q$. 
For every subgraph $Q$ in $G[X]$, since $\lvert\C_Q \rvert \leq \prod_{v \in V(Q)} |\C_v|$, we have 
\begin{equation}
\lvert \C_Q \rvert \leq {d \choose r}^{\lvert V(Q) \rvert}. \label{eq:boundsizeCP}
\end{equation}

\begin{claim} \label{claim:generalsizecollection}
Let $u \in X$ and $q$ be a nonnegative integer.
If $\C$ is the set consisting of all the members of $\C_Q$ for all connected subgraphs $Q$ in $G[X]$ containing $u$ satisfying that $\lvert V(Q) \rvert=q$ and every vertex $v \in V(Q)$ has degree at least $r$ in $G$, then every member of $\C$ has size at least $qr-{q \choose 2}$, and $\lvert \C \rvert \leq {d \choose r}^q \cdot (4qd)^{q^2}$.
\end{claim}

\noindent{\bf Proof of Claim \ref{claim:generalsizecollection}:}
Since every vertex of $Q$ has degree at least $r$ in $G$, every member of $\C$ has size at least $qr-{q \choose 2}$.
Since every vertex in $X$ has degree at most $d$ in $G$, for every $q'$ with $0 \leq q' \leq q$, there are at most $d^{q'} \leq d^q$ paths in $G[X]$ of length $q'$ starting at $u$.
So $\lvert N_G^{\leq q}[u] \rvert \leq qd^q+1$.
Since every connected subgraph $Q$ in $G[X]$ containing $u$ with $\lvert V(Q) \rvert=q$ satisfies $V(Q) \subseteq N_G^{\leq q}[u]$, there are at most ${\lvert N_G^{\leq q}[u] \rvert \choose q} \cdot 2^{\binom{|V(Q)|}{2}} \leq (4qd)^{q^2}$ connected subgraphs $Q$ in $G[X]$ containing $u$ with $\lvert V(Q) \rvert=q$.
So together with (\ref{eq:boundsizeCP}), $\lvert \C \rvert \leq {d \choose r}^q \cdot \lvert \{Q: Q$ is a connected subgraph in $G[X]$ containing $u$ with $\lvert V(Q) \rvert=q\} \rvert \leq {d \choose r}^q \cdot (4qd)^{q^2}$. \hfill
$\Box$

\

Define $\C_0$ to be the union of $\C_{Q_R}$ over all subgraphs $R$ of $G$ of minimum degree at least $r$ containing $v^*$, where $v^*$ and $Q_R$ are defined in (ii).
By Claim \ref{claim:generalsizecollection}, every member of $\C_0$ has size at least $(w+1)r-{w+1 \choose 2}$ and $\lvert \C_0 \rvert \leq {d \choose r}^{w+1} \cdot (4(w+1)d)^{(w+1)^2} \leq c$.
By (ii), for every subgraph $R'$ of $G$ of minimum degree at least $r$ containing $v^*$, $E(R')$ contains some member of $\C_0$.

For every $S \in \C_0$, since $\lvert S \rvert  \geq (w+1)r-{w+1 \choose 2}$, there exists a subset $f^*(S)$ of $S$ of size $(w+1)r-{w+1 \choose 2}$.
Let $\C^*=\{f^*(S): S \in \C_0\}$.
So every member of $\C^*$ has size $(w+1)r-{w+1 \choose 2}$, $\lvert \C^* \rvert \leq \lvert \C_0 \rvert \leq c$, and for every subgraph of $G$ of minimum degree at least $r$ containing $v^*$, its edge-set contains some member of $\C^*$.

Therefore, to prove Statement 4 of this lemma, it suffices to prove Statements 4(a) or 4(b) holds.
We first assume that $X=V(G)$.
Then every vertex of $G$ is of degree at most $d$ by (i).
If every vertex of $G$ has degree less than $r$, then there exists no subgraph of $G$ of minimum degree at least $r$, so Statement 4(a) holds by choosing $\C=\emptyset$ and choosing $x^*$ to be any vertex of $G$.
Hence we may assume that there exists a vertex $v$ of $G$ of degree at least $r$.
For every subgraph $R$ of $G$ of minimum degree at least $r$ containing $v$, there exists a star $T_R$ on $r+1$ vertices centered at $v$ contained in $R$.
Note that every vertex in such $T_R$ has degree at least $r$ in $G$ since $R$ has minimum degree at least $r$.
Define $\C_1$ to be the union of $\C_{T_R}$ over all subgraphs $R$ of $G$ of minimum degree at least $r$ containing $v$. 
By Claim \ref{claim:generalsizecollection}, every member of $\C_1$ has size at least $(r+1)r-{r+1 \choose 2}={r+1 \choose 2}$ and $\lvert \C_1 \rvert \leq {d \choose r}^{r+1} \cdot (4(r+1)d)^{(r+1)^2} \leq c$.
For every subgraph $R'$ of $G$ of minimum degree at least $r$ containing $v$, since $V(T_{R'}) \subseteq V(R')$, $E(R')$ contains some member of $\C_1$. 
Hence Statement 4(a) holds and we are done.

So we may assume that $X \neq V(G)$.
Hence by (iii), there exists a nonnegative integer $s$ with $s \leq s_0$ such that either (C1) or (C2) holds.
We may also assume that Statement 4(b)(i) does not hold, for otherwise we are done.
In particular, (C2) holds by (iii).

Let $\C = \{E(Q): Q$ is a subgraph of $G$ obtained from a connected subgraph $Q'$ of $G[X]$ containing $x^*$ with $|E(Q')| \leq s_0+1$ by adding edges of $G$ incident with $V(Q')$ such that $\lvert E(Q) \rvert=s_0+1\}$.
Note that for every connected subgraph $Q'$ of $G[X]$ with $x^* \in V(Q')$ and $\lvert E(Q') \rvert \leq s_0+1$, $V(Q') \subseteq N_{G[X]}^{\leq s_0+2}[x^*]$ by the connectedness.
Since every vertex in $X$ has degree at most $d$ in $G$, $|V(Q')| \leq \lvert N_{G[X]}^{\leq s_0+2}[x^*] \rvert \leq (s_0+3)d^{s_0+2}$. 
Thus the number of such connected graphs $Q'$ is at most $2^{\lvert N_{G[X]}^{\leq s_0+2}[x^*] \rvert} \cdot 2^{\binom{\lvert N_{G[X]}^{\leq s_0+2}[x^*] \rvert}{2}} \leq 2^{(\lvert N_{G[X]}^{\leq s_0+2}[x^*] \rvert)^2} \leq  2^{(s_0+3)^2d^{2s_0+4}}$.
So the number of subgraphs $Q$ of $G$ obtained from a connected subgraphs $Q'$ of $G[X]$ containing $x^*$ with $|E(Q')| \leq s_0+1$ by adding edges of $G$ incident with $V(Q')$ such that $\lvert E(Q) \rvert=s_0+1$ is at most $\binom{ d |V(Q')|}{s_0+1}$ multiplying by the   number of $Q'$, which is at most ${d \cdot (s_0+3)d^{s_0+2} \choose s_0+1} \cdot 2^{(s_0+3)^2d^{2s_0+4}} \leq c$.
Hence $\lvert \C \rvert \leq c$.
In addition, every member of $\C$ has size $s_0+1$.

By (C2), for every subgraph $R$ of $G$ of minimum degree at least $r$ containing $x^*$, there exists a connected subgraph $F_R'$ of $R[X \cap V(R)]$ containing $x^*$ whose number of edges in $R$ incident to vertices in $F_R'$ is at least $s_0+1$, so there exists a connected subgraph $F_R$ of $R$ obtained from a connected subgraph $F_R''$ of $F_R'$ containing $x^*$ with $|E(F_R'')| \leq s_0+1$ by adding edges of $R$ incident with $V(F_R'')$ such that $\lvert E(F_R) \rvert=s_0+1$. 
Note that $E(F_R) \in \C$. 
Hence $E(R)$ contains $E(F_R) \in \C$. 
Therefore Statement 4(b)(ii) holds.
\end{proof}

Now we are ready to prove Lemma \ref{minor collection}. 

\begin{lemma} \label{minor collection} 
For any $r,t,t' \in {\mathbb N}$, integer $w$ with $r \geq w \geq 0$, and nonnegative integer $s_0$, there exists an integer $c$ such that for every graph $G$, either
	\begin{enumerate}
		\item $G$ contains $K_{r-w+1} \vee I_t$ as a minor, or
		\item there exist a $(c,(w+1)r-{w+1 \choose 2}, r)$-good signature collection $\C$ for $G$ and a nonnegative integer $s$ with $s \leq s_0$ such that either
			\begin{enumerate}
				\item there exists a connected graph $F_0$ such that $G$ contains $F \wedge_{t'} I$ as a subgraph for some $F \in {\mathcal F}(I_{r-w}, F_0,r)$ of type $s$, where $I$ is the heart of $F$,  or
				\item there exists a $(c, \min\{s_0+1,{r+1 \choose 2}\}, r)$-good signature collection for $G$. 
			\end{enumerate}
	\end{enumerate}
\end{lemma}

\begin{proof}
Let $r,t,t' \in {\mathbb N}$ and $w$ be an integer with $r \geq w \geq 0$.
Let $s_0$ be a nonnegative integer.
Let $k$ be a real number such that every graph with average degree at least $k$ contains $K_{r-w+1} \vee I_t$ as a minor. 
Note that such a number $k$ exists since we can take $k$ to be any value larger than the supreme of maximum average degree in all $K_{r-w+1+t}$-minor free graphs, and the supreme exists by \cite{Thomason}. 
Define $c$ and $d$ to be the numbers $c$ and $d$ mentioned in Lemma \ref{subgraph one vertex collection} by taking $(r,t,t',w,s_0,k,k')=(r,t+{r-w+1 \choose 2},t',w,s_0,k,k)$. 

Let $G$ be a graph.
We shall prove this lemma by induction on $\lvert V(G) \rvert$.
This lemma holds when $\lvert V(G) \rvert=1$ since there exists no subgraph of $G$ of minimum degree at least $r$ and hence Statement 2 holds.
Now we assume that this lemma holds for all graphs with fewer vertices than $G$.

We may assume that $G$ does not contain $K_{r-w+1} \vee I_t$ as a minor, for otherwise we are done.
Since $G$ does not contain $K_{r-w+1} \vee I_t$ as a minor, every subgraph of $G$ has average degree less than $k$.
Similarly, there does not exist a graph $H$ of average degree greater than $k$ such that some subgraph $H'$ of $G$ is a $[4s_0+2w+5]$-subdivision of $H$, for otherwise $H'$ (and hence $G$) contains a subdivision of a subgraph of $H$ that contains $K_{r-w+1} \vee I_t$ as a minor, a contradiction.
In addition, since $K_{r-w+1,t+{r-w+1 \choose 2}}$ contains $K_{r-w+1} \vee I_t$ as a minor, $G$ does not contain $K_{r-w+1,t+{r-w+1 \choose 2}}$ as a $(2s_0+w+2)$-shallow minor.

Hence, applying Lemma \ref{subgraph one vertex collection} by taking $(r,t,t',w,s_0,k,k')=(r,t+{r-w+1 \choose 2},t',w,s_0,k,k)$, there exists $x^* \in V(G)$ and a collection $\C_{x^*}$ of $q$-element subsets of $E(G)$ with $\lvert \C_{x^*} \rvert \leq c$ such that for every subgraph $R$ of $G$ of minimum degree at least $r$ containing $x^*$, $E(R)$ contains some member of $\C_{x^*}$, where $q$ is defined as follows:
	\begin{itemize}
		\item if every vertex of $G$ is of degree at most $d$, then $q={r+1 \choose 2}$;
		\item otherwise, if there exists a connected graph $F_0$ such that $G$ contains $F \wedge_{t'} I$ as a subgraph for some $F \in {\mathcal F}(I_{r-w}, F_0,r)$ of type $s$ for some integer $s$ with $0 \leq s \leq s_0$, where $I$ is the heart of $F$, then $q=(w+1)r-{w+1 \choose 2}$;
		\item otherwise, $q=\max\{(w+1)r-{w+1 \choose 2},\min\{s_0+1,{r+1 \choose 2}\}\}$.
	\end{itemize}
Since $w$ is an integer with $0 \leq w \leq r$, $(w+1)r-{w+1 \choose 2} \leq {r+1 \choose 2}$.

Let $G'=G-x^*$.
Note that $G'$ does not contain $K_{r-w+1} \vee I_t$ as a minor.
So by the induction hypothesis, there exists a collection $\C'$ of $q'$-element subsets of $E(G')$ with $\lvert \C' \rvert \leq c\lvert V(G') \rvert = c(\lvert V(G) \rvert-1)$ such that for every subgraph $R$ of $G'$ of minimum degree at least $r$, $E(R)$ contains some member of $\C'$, where 
	\begin{itemize}
		\item if there exists a connected graph $F_0$ such that $G'$ contains $F \wedge_{t'} I$ as a subgraph for some $F \in {\mathcal F}(I_{r-w}, F_0,r)$ of type $s$ for some integer $s$ with $0 \leq s \leq s_0$, where $I$ is the heart of $F$, then $q'=((w+1)r-{w+1 \choose 2})$, and 
		\item otherwise, $q'=\max\{((w+1)r-{w+1 \choose 2}), \min\{s_0+1,{r+1 \choose 2}\}\}$.
	\end{itemize}
Note that if there exists a connected graph $F_0$ such that $G'$ contains $F \wedge_{t'} I$ as a subgraph for some $F \in {\mathcal F}(I_{r-w}, F_0,r)$ of type $s$, where $I$ is the heart of $F$, then so does $G$.
So $q' \leq q$.
Hence for every $S \in \C_{x^*}$, there exists a subset $f(S)$ of $S$ of size $q'$ such that $\lvert \{f(S): S \in \C_{x^*}\} \rvert \leq c$, and for every subgraph $R$ of $G$ of minimum degree at least $r$ containing $x^*$, $E(R)$ contains some member of $\{f(S): S \in \C_{x^*}\}$.

Define $\C=\{f(S): S \in \C_{x^*}\} \cup \C'$.
So $\C$ is a collection of $q'$-element subsets of $E(G)$ with size at most $\lvert \C_{x^*} \rvert + \lvert \C' \rvert \leq c\lvert V(G) \rvert$.
Let $R$ be a subgraph of $G$ of minimum degree at least $r$.
If $R$ contains $x^*$, then $E(R)$ contains some member of $\{f(S): S \in \C_{x^*}\} \subseteq \C$.
If $R$ does not contain $x^*$, then $R$ is a subgraph of $G'$ of minimum degree at least $r$, so $E(R)$ contains some member of $\C' \subseteq \C$.
Therefore, Statement 2 holds for $G$.
This proves this lemma.
\end{proof}

\section{Proof of main theorems}  \label{sec:Cimplies-main-thm}

We prove Theorems \ref{thm: main critical exponent 1}, \ref{thm:critical exponent lower bound} and Corollary \ref{cor:many_intro} in this section. 
We first prove the following simple lemma. 

\begin{lemma} \label{Krsminorsubgraph}
Let $r$ be a positive integer.
Let $H$ be a graph that is not a subgraph of $K_{r} \vee I_t$ for any positive integer $t$.
Then $\{K_{r,s}: s \geq r\} \subseteq \M(H)$.
\end{lemma}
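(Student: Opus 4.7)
The plan is to prove the contrapositive: if some $K_{r,s}$ (with $s \geq r$) contains $H$ as a minor, then $H$ is a subgraph of $K_r \vee I_t$ for some positive integer $t$. Equivalently, starting from an assumed $H$-minor model in $K_{r,s}$, I will extract an embedding of $H$ into some $K_r \vee I_t$ directly.

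Fix $s \geq r$ and assume $H$ is a minor of $K_{r,s}$, with bipartition $(A,B)$ of $K_{r,s}$ where $|A|=r$ and $|B|=s$. Let $\{B_v : v \in V(H)\}$ be the family of pairwise disjoint, connected branch sets realizing $H$ as a minor. Partition $V(H) = V_1 \cup V_2$, where $V_1 = \{v : B_v \cap A \neq \emptyset\}$ and $V_2 = \{v : B_v \subseteq B\}$. The first key observation is that $|V_1| \leq r$, since the sets $B_v$ are disjoint and each $B_v$ with $v \in V_1$ contains at least one of the $r$ vertices of $A$. The second key observation is that for each $v \in V_2$, the branch set $B_v$ lies entirely in the independent set $B$; since $B_v$ must be connected in $K_{r,s}$, this forces $|B_v|=1$.

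Now I use these observations to show $V_2$ is independent in $H$ and that $H$ embeds into a suitable join. For any edge $uv$ of $H$ with $u,v \in V_2$, we would need an edge of $K_{r,s}$ between the singletons $B_u,B_v \subseteq B$, which is impossible since $B$ is independent. Hence $V_2$ induces an empty subgraph of $H$. Choose an injection $\iota_1 \colon V_1 \to V(K_r)$ (possible since $|V_1| \leq r$) and a bijection $\iota_2 \colon V_2 \to V(I_{t})$ for $t = \max\{|V_2|,1\}$. Then the combined map sends every edge of $H[V_1]$ to an edge of $K_r$, every edge between $V_1$ and $V_2$ to a join edge, and there are no $H$-edges inside $V_2$ to worry about. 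Thus $H$ is a subgraph of $K_r \vee I_t$, contradicting the hypothesis on $H$.

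I do not expect any real obstacle here; the argument is essentially a structural dichotomy forced by the bipartite, cover-number-$r$ nature of $K_{r,s}$. The only minor subtlety to handle carefully is the boundary case $V_2 = \emptyset$, where one picks $t = 1$ (using that $K_r$ is a subgraph of $K_r \vee I_1$) so that the conclusion genuinely refers to a \emph{positive} integer $t$ as required by the statement.
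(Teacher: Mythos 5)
Your proof is correct and is essentially the same argument as the paper's: the paper asserts in one line that every minor of $K_{r,s}$ is a subgraph of $K_r \vee I_s$, and your branch-set analysis (at most $r$ branch sets meet the small side, the rest are singletons in the independent side) is exactly the justification of that one-liner.
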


\begin{proof}
For every integer $s$ with $s \geq r$, every minor of $K_{r,s}$ is a subgraph of $K_r \vee I_s$.
Hence, if there is an integer $s$ such that $K_{r,s}$ contains $H$ as a minor, then $H$ is a subgraph of $K_r \vee I_s$, a contradiction.
Hence $K_{r,s}$ does not contain $H$ as a minor for every $s \geq r$.
\end{proof}

Now we combine Lemmas \ref{lem:coverCandp}, \ref{weak general collection} and \ref{minor collection} to determine or find a lower bound for the threshold for degeneracy.
It is an essential step toward proving our main results in Section \ref{subsec:ourresult}.

\begin{lemma} \label{critical exponent 1-w}
Let $r \geq 2$ be an integer.
Let $H$ be a graph.
Then $p_{\M(H)}^{\D_r}=\Omega(n^{-1/q_H})$, where $q_H$ is defined as follows. 
	\begin{enumerate}
		\item If $H$ is not a subgraph of $K_{r} \vee I_t$ for any positive integer $t$, then $q_H=r$.
		\item Otherwise let $w$ be the largest integer with $1 \leq w \leq r$ such that $H$ is a subgraph of $K_{r-w+1} \vee I_t$ for some positive integer $t$.
			\begin{enumerate}
				\item If $H$ is not a subgraph of $K_{r-w} \vee tK_{w+1}$ for any positive integer $t$, then $q_H=(w+1)r-{w+1 \choose 2}$.
				\item Otherwise, $q_H=\max\{\min\{s+1,{r+1 \choose 2}\},(w+1)r-{w+1 \choose 2}\}$, where $s$ is the largest integer with $0 \leq s \leq {r+1 \choose 2}$ such that for every integer $s'$ with $0 \leq s' \leq s$, every connected graph $F_0$ and every graph $F \in {\mathcal F}(I_{r-w},F_0,r)$ of type $s'$, $H$ is a minor of $F \wedge_t I$ for some positive integer $t$, where $I$ is the heart of $F$.
			\end{enumerate}
	\end{enumerate}
Furthermore, $p_{\M(H)}^{\D_r}=\Theta(n^{-1/q_H})$ and $p_{\M(H)}^{\chi_r^\ell}=\Theta(n^{-1/q_H})$ in Statements 1 and 2(a). 
\end{lemma}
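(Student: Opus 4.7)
The strategy in all three cases is to apply Lemma \ref{lem:coverCandp}: once we produce $(c,q_H,r)$-good signature collections for every graph in $\M(H)$, the bound $p_{\M(H)}^{\D_r}=\Omega(n^{-1/q_H})$ follows at once. The collections will come from Lemmas \ref{weak general collection} and \ref{minor collection} after appropriately tuning their parameters. For Case 1, where $H\not\subseteq K_r\vee I_t$ for any $t$, Lemma \ref{weak general collection} directly yields $(c,r,r)$-good signature collections, so Lemma \ref{lem:coverCandp} gives $\Omega(n^{-1/r})$. For the matching upper bound in the ``furthermore'' clause, Lemma \ref{Krsminorsubgraph} places $\{K_{r,s}:s\geq r\}$ inside $\M(H)$, and Corollary \ref{upper bound threshold}(1) supplies $O(n^{-1/r})$ for both $\D_r$ and $\chi_r^\ell$; Proposition \ref{relation three properties} then promotes the $\D_r$ lower bound to one for $\chi_r^\ell$.

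For Case 2(a), I fix $t_0$ with $H\subseteq K_{r-w+1}\vee I_{t_0}$, so any $G\in\M(H)$ is also $(K_{r-w+1}\vee I_{t_0})$-minor free. Applying Lemma \ref{minor collection} with $t=t_0$ and arbitrary $t',s_0$, alternative (1) is excluded, so alternative (2) produces a signature collection of $\bigl((w+1)r-\binom{w+1}{2}\bigr)$-element subsets of $E(G)$ covering all subgraphs of minimum degree at least $r$, and Lemma \ref{lem:coverCandp} delivers the lower bound. For the upper bound, a short branch-set analysis shows that every minor of $I_{r-w}\vee tK_{w+1}$ is a subgraph of $K_{r-w}\vee t'K_{w+1}$ for some $t'$, so the hypothesis of 2(a) gives $\{I_{r-w}\vee tK_{w+1}:t\geq 1\}\subseteq\M(H)$; Corollary \ref{upper bound threshold}(2a) then yields the $O(n^{-1/q_H})$ upper bound for $\D_r$ and $\chi_r^\ell$, and Proposition \ref{relation three properties} again lifts the $\D_r$ lower bound to $\chi_r^\ell$.

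For Case 2(b), the plan is to choose the parameters of Lemma \ref{minor collection} so as to rule out alternative (2a). I set $s_0=s$, the integer from the definition of $q_H$, and pick $t'$ large enough that for every connected $F_0$ and every $F\in\mathcal{F}(I_{r-w},F_0,r)$ of type at most $s_0$, the graph $F\wedge_{t'}I$, with $I$ the heart of $F$, contains $H$ as a minor. Such a uniform $t'$ exists because a sum-of-degrees calculation gives $r|V(F_0)|\leq 2s_0$, so $|V(F_0)|$ and hence the number of isomorphism classes of such pairs $(F_0,F)$ is finite; the defining property of $s$ supplies a value $t'_F$ for each $F$, and we take the maximum. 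With $t=t_0$, $s_0=s$, and this $t'$, alternative (2a) of Lemma \ref{minor collection} cannot occur for any $G\in\M(H)$, so (2b) provides a collection of $\min\bigl\{s+1,\binom{r+1}{2}\bigr\}$-element subsets covering all subgraphs of minimum degree at least $r$; together with the main collection of $\bigl((w+1)r-\binom{w+1}{2}\bigr)$-element subsets from (2), applying Lemma \ref{lem:coverCandp} to whichever collection has size equal to $q_H=\max$ of the two completes the lower bound.

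The principal obstacle is the uniform choice of $t'$ in Case 2(b): the entire argument hinges on the finiteness of the relevant configurations $(F_0,F)$, which in turn relies on the type bound forcing $|V(F_0)|$ to be bounded. Once this is secured, the remainder is a routine assembly of prior lemmas; the only other small verification needed is the branch-set analysis confirming $I_{r-w}\vee tK_{w+1}\in\M(H)$ in Case 2(a).
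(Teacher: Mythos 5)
Your proposal is correct and follows essentially the same route as the paper's proof: Lemmas \ref{weak general collection} and \ref{minor collection} supply the good signature collections (case~1 via Lemma \ref{weak general collection}; case~2 via Lemma \ref{minor collection} with alternative~(1) excluded because $H \subseteq K_{r-w+1}\vee I_{t_0}$ forces $G$ to be $(K_{r-w+1}\vee I_{t_0})$-minor free, and with $s_0=s$ and a uniformly large $t'$ in case~2(b) to rule out alternative~(2a)), and Lemma \ref{lem:coverCandp}, Corollary \ref{upper bound threshold}, Lemma \ref{Krsminorsubgraph}, and Proposition \ref{relation three properties} handle the conversion to threshold bounds exactly as in the paper. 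The only cosmetic variations are your slightly tighter degree-sum bound $r\lvert V(F_0)\rvert\leq 2s_0$ where the paper uses $\lvert V(F_0)\rvert\leq s'+1$ from connectivity, and your direct choice $t=t_0$ where the paper inflates to $t_H+\binom{r-w+1}{2}$; both are sound.
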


\begin{proof}
We first assume that $H$ is not a subgraph of $K_{r} \vee I_t$ for any positive integer $t$.
By Lemma \ref{weak general collection}, there exists a real number $c$ (only depending on $r$ and $H$) such that for every $H$-minor free graph $G$, there exists and a collection $\C$ of $r$-element subsets of $E(G)$ with $\lvert \C \rvert \leq c \lvert V(G) \rvert$ such that for every subgraph of $G$ of minimum degree at least $r$, its edge-set contains some member of $\C$.
So the threshold $p_{\M(H)}^{\D_r} = \Omega(n^{-1/ r})$ by Lemma \ref{lem:coverCandp}.
In addition, by Lemma \ref{Krsminorsubgraph}, $\M(H)$ contains $\{K_{r,s}: s \geq r\}$. 
So $p_{\M(H)}^{\D_r} = O(n^{-1/ r})$ and $p_{\M(H)}^{\chi_r^\ell} = O(n^{-1/ r})$ by Corollary \ref{upper bound threshold}. 
Thus $p_{\M(H)}^{\D_r} = \Theta(n^{-1/ r})$ and $p_{\M(H)}^{\chi_r^\ell} = \Theta(n^{-1/ r})$ by Proposition \ref{relation three properties}.
This proves Statement 1.

Now we may assume that $H$ is a subgraph of $K_r \vee I_t$ for some positive integer $t$.
So there exists the largest integer $w$ with $1 \leq w \leq r$ such that $H$ is a subgraph of $K_{r-w+1} \vee I_t$ for some positive integer $t$.
Hence there exists an integer $t_H \geq r$ such that $H$ is a subgraph of $K_{r-w+1} \vee I_{t_H}$.
Since $K_{r-w+1, t_H + {r-w+1 \choose 2}}$ contains $K_{r-w+1} \vee I_{t_H}$ as a minor, every $H$-minor free graph does not contain $K_{r-w+1, t_H + {r-w+1 \choose 2}}$ as a minor and hence does not contain $K_{r-w+1} \vee I_{t_H + {r-w+1 \choose 2}}$ as a minor.

Let $c_1$ be the number $c$ mentioned in Lemma \ref{minor collection} by taking $(r,t,t',w,s_0)=(r,t_H + {r-w+1 \choose 2},1,w, \allowbreak {r+1 \choose 2})$.
Since every $H$-minor free graph does not contain $K_{r-w+1} \vee I_{t_H + {r-w+1 \choose 2}}$ as a minor, Lemma \ref{minor collection} implies that for every $H$-minor free graph $G$, there exists a collection of subsets of $E(G)$ which is a good-$(c_1, \C_{G,1}$ of $((w+1)r-{w+1 \choose 2}), r)$ signature. 
So $p_{\M(H)}^{\D_r} = \Omega(n^{-1/((w+1)r-{w+1 \choose 2})})$ by Lemma \ref{lem:coverCandp}. 
Hence by Proposition \ref{relation three properties}, $p_{\M(H)}^{\chi^\ell_r} = \Omega(n^{-1/((w+1)r-{w+1 \choose 2})})$ by Lemma \ref{lem:coverCandp}. 

Now we assume that $H$ is not a subgraph of $K_{r-w} \vee tK_{w+1}$ for any positive integer $t$.
Note that for every positive integer $s$ with $s \geq r-w$, every minor of $I_{r-w} \vee sK_{w+1}$ is a subgraph of $K_{r-w} \vee sK_{w+1}$.
So for every positive integer $t$ with $t \geq r-w$, $I_{r-w} \vee tK_{w+1}$ does not contain $H$ as a minor.
That is, $\{I_{r-w} \vee sK_{w+1}: s \geq r-w\} \subseteq \M(H)$. 
By Corollary \ref{upper bound threshold}, $p_{\M(H)}^{\D_r}=O(n^{-1/q_H})$ and $p_{\M(H)}^{\chi_r^\ell}=O(n^{-1/q_H})$.
This proves Statement 2(a).

Hence we may assume that $H$ is a subgraph of $K_{r-w} \vee tK_{w+1}$ for some positive integer $t$.
Note that it implies that $H$ is a subgraph of $K_{r-w} \vee tK_{w+1}$ for every sufficiently large positive integer $t$.

We say that a triple $(a,F_0,F)$ is a {\it standard triple} if $a$ is a nonnegative integer, $F_0$ is a connected graph, and $F$ is a member of ${\mathcal F}(I_{r-w},F_0,r)$ of type $a$. 
Let $s$ be the largest integer with $0 \leq s \leq {r+1 \choose 2}$ such that for every integer $s'$ with $0 \leq s' \leq s$ and for every standard triple $(s',F_0,F)$, $H$ is a minor of $F \wedge_t I$ for some positive integer $t$, where $I$ is the heart of $F$. 
The number $s$ is well-defined (i.e., $s \geq 0$) since there is no graph $F$ in ${\mathcal F}(I_{r-w},F_0,r)$ of type $0$.

This definition implies that for every integer $s'$ with $0 \leq s' \leq s$ and standard triple $(s',F_0,F)$, there exists an integer $t_{s',F_0,F}$ such that $H$ is a minor of $F \wedge_{t} I$ for every integer $t$ with $t \geq t_{s',F_0,F}$, where $I$ is the heart of $F$.
In addition, for every integer $s'$ with $0 \leq s' \leq s$ and standard triple $(s',F_0,F)$, since $F_0$ is connected, we know $\lvert V(F_0) \rvert \leq \lvert E(F_0) \rvert+1 \leq s'+1 \leq {r+1 \choose 2}+1$.
So there are only finitely many different standard triple $(s',F_0,F)$ with $0 \leq s' \leq s$.
We define $t^*$ to be the maximum $t_{s',F_0,F}$ among all integers $s'$ with $0 \leq s' \leq s$ and standard triples $(s',F_0,F)$.
So $H$ is a minor of $F \wedge_{t^*} I$, where $I$ is the heart of $F$.

Applying Lemma \ref{minor collection} by taking $(r,t,t',w,s_0) = (r,t_H+{r-w+1 \choose 2},t^*,w,s)$, there exists a number $c_2$ such that for every $K_{r-w+1} \vee I_{t_H+{r-w+1 \choose 2}}$-minor free graph $G$, there exists an integer $s_G$ with $0 \leq s_G \leq s$ such that either 
	\begin{itemize}
		\item[(i)] there exists a connected graph $F_0$ such that $G$ contains $F \wedge_{t^*} I$ as a subgraph for some $F \in {\mathcal F}(I_{r-w},F_0,r)$ of type $s_G$, where $I$ is the heart of $F$, or 
		\item[(ii)] there exists a collection $\C$ of subsets of $E(G)$ which is a good-$(c_2,\min\{s+1, {r+1 \choose 2}\}, r)$ signature. 
	\end{itemize}

Let $G$ be an $H$-minor free graph.
Suppose that (i) holds for $G$.
Then there exists a connected graph $F_0$ such that $G$ contains $F \wedge_{t^*} I$ as a subgraph for some $F \in {\mathcal F}(I_{r-w},F_0,r)$ of type $s_G \leq s$, where $I$ is the heart of $F$.
By the definition of $t^*$, $H$ is a minor of $F \wedge_{t^*} I$, so $G$ contains $H$ as a minor, contradiction.
Hence (ii) holds for $G$.
Therefore, there exists a collection $\C_{G,2}$ of subsets of $E(G)$ which is a good-$(c_2,\min\{s+1, {r+1 \choose 2}\}, r)$ signature. 
Hence $p_{\M(H)}^{\D_r} = \Omega(n^{-1/\min\{s+1, {r+1 \choose 2}\}})$ by Lemma \ref{lem:coverCandp} and Statement 2(b) holds.
This proves the lemma.
\end{proof}

Note that the value $q_H$ and the conditions stated in Lemma \ref{critical exponent 1-w} is less precise than the ones stated in results in Section \ref{subsec:ourresult}.
We shall improve them in Lemmas \ref{critical exponent H min deg-w} and \ref{lemma:r-1Vee2}.
In order to do so, we need to understand the subgraphs of $K_{r-1} \vee tK_2$, as they appear in Statement 2(a) in Lemma \ref{critical exponent 1-w}.
Lemmas \ref{lemma:L_tminor} and \ref{lemma:oneisnotminor} are dedicated to this purpose, and their proofs are included in the appendix (Section \ref{subsec:appendix_pf_minor}).

\begin{lemma} \label{lemma:L_tminor}
Let $r$ be a positive integer with $r \geq 4$.
Let $H$ be a graph of minimum degree at least $r$ such that $H$ is a subgraph of $K_{r-1} \vee tK_2$ for some positive integer $t$.
Let $t^*$ be the minimum such that $H$ is a subgraph of $K_{r-1} \vee t^*K_2$.
Then either $H$ is not a minor of $L_t$ (defined in  Definition \ref{dfn:Lt}) for any positive integer $t$, or $2t^*=3q$ for some positive integer $q$. 
\end{lemma}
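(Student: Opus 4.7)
The plan is to mimic the proof of Lemma~\ref{lemma:Veeminor}, replacing the host graph $K_{r-2} \vee tK_3$ by $L_t$. Assume $H$ is a minor of $L_t$ for some positive integer $t$ and fix an $H$-minor model $\alpha$. Let $Y^*$ denote the identified stable set of $r-1$ vertices in $L_t$, so that $L_t - Y^*$ is a disjoint union of $t$ triangles, and observe that each triangle vertex has exactly $r$ neighbors in $L_t$ owing to the matching removed in the construction of $L$. As in Lemma~\ref{lemma:Veeminor}, write $H = (K_{r-1} \vee t^* K_2) - S$ with $S \subseteq E(K_{r-1})$ and call the $r-1$ vertices in $V(K_{r-1})$ inner and the $2t^*$ vertices in $V(t^* K_2)$ outer.

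First I would establish the analogues of statements (1)--(3) of Claim~\ref{claim:Veeminor}. For any branch set $A_1$ disjoint from $Y^*$, the bound $|N_{L_t}(A_1) - A_1| \leq (r-1) + (3 - |A_1|)$ forces $|A_1| \leq 2$. The new case $|A_1| = 2$, which does not arise in Lemma~\ref{lemma:Veeminor}, is ruled out by a ``trapped third vertex'' argument: if $A_1 = \{u, v\}$ sits in a triangle $\{u, v, w\}$, the matching forces $N_{L_t}(A_1) - A_1 = Y^* \cup \{w\}$ to have size exactly $r$, so the branch set of $w$ must be disjoint from $Y^*$ and equal to $\{w\}$, but then it has only $r - 1$ distinct neighboring branch sets in $L_t$, contradicting its representing an $H$-vertex of degree at least $r$. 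Hence every branch set disjoint from $Y^*$ is a singleton. A similar argument rules out ``extensions'' of the form $\{v, y_u\}$ of a singleton across the matching gap, because the remaining singleton on the third triangle vertex would again be short one distinct neighbor. Consequently, any triangle contributing to $\alpha$ does so via exactly three singleton branch sets, and combining the neighborhood analyses of these three singletons (whose matched partners are three distinct vertices of $Y^*$) shows that every vertex of $Y^*$ lies in its own distinct branch set.

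Next I would prove the analogue of statement (4) of Claim~\ref{claim:Veeminor}: if some triangle-singleton $A_1$ represents an outer vertex of $H$, then $t^* = 1$. When some inner singleton also sits in the same triangle $X$ (Case~1), the argument in Lemma~\ref{lemma:Veeminor} transfers to give $r - 1 + 2t^* \leq |Y^* \cup X| = r + 2$, and hence $t^* \leq 1$. In the complementary Case~2 an exhaustive subcase analysis of the branch sets of the two triangle-mates of $A_1$ (each constrained by the singleton neighborhood analysis) shows every configuration either forces $A_1$'s outer partner to have fewer than $r$ distinct neighboring branch sets or creates an unavoidable outer-outer edge to $A_1$ beyond its unique partner, a contradiction. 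Thus for $t^* \geq 2$ every triangle-singleton is inner, and writing $q$ for the number of active triangles the count $r - 1 = 3q + (r - 1 - 2t^*)$ gives $2t^* = 3q$. The case $t^* = 1$ is incompatible with $2 = 3q$ having a positive integer solution, so $H$ is not a minor of $L_t$ in that case, completing the dichotomy in the statement.

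The main technical obstacle beyond Lemma~\ref{lemma:Veeminor} is the extra flexibility caused by the larger core $|Y^*| = r - 1$: size-two branch sets disjoint from $Y^*$ and extensions of singletons across the matching to a $Y^*$-vertex are both allowed a priori, and must be eliminated by the new trapped-third-vertex arguments sketched above; in addition, the strict inequality $r - 1 > |Y|$ that was available in Lemma~\ref{lemma:Veeminor} is lost, so Case~2 no longer collapses automatically and requires the subcase analysis. Once these two points are handled, the remainder of the argument parallels Lemma~\ref{lemma:Veeminor} very closely, with the arithmetic shifting from $2t^* = 3q - 1$ to $2t^* = 3q$ because $|Y^*| = r - 1$ rather than $|Y| = r - 2$.
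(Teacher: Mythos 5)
Your plan mirrors the paper's proof in its overall shape --- same inner/outer dichotomy for $H=(K_{r-1}\vee t^*K_2)-S$, same trapped-third-vertex argument ruling out $|A_1|=2$, same claim-based decomposition and final count --- but there is a genuine gap in how you dispose of $t^*=1$. The paper's Claim~\ref{claim:L_tminor}(4) is strictly stronger than the version you propose to carry over: it asserts that $A_1$ corresponds to an inner vertex \emph{unconditionally}, with no ``or $t^*=1$'' escape clause. The extra ingredient is that when $A_1=\{x_1\}$ is assumed outer (representing $v_1$ with outer partner $v_1'$), the matched non-neighbour $y_1$ of $x_1$ lies in its own branch set (by statement~(3)) which is \emph{not} adjacent to $A_1$ in $L_t$, and hence must represent an outer vertex different from both $v_1$ and $v_1'$. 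Once the inner-neighbour counting you describe forces $t^*=1$, this third outer vertex is an outright contradiction; so $A_1$ being outer is impossible for \emph{every} $t^*$, and the count $2t^*=3q$ holds with no case split. Your proposal retains the escape clause, so your analysis only yields $2t^*=3q$ when $t^*\geq 2$. Your closing sentence --- ``The case $t^*=1$ is incompatible with $2=3q$ having a positive integer solution, so $H$ is not a minor of $L_t$ in that case'' --- is not a deduction from anything you have established: nothing in your argument rules out $H=K_{r+1}$ (the unique $H$ with $t^*=1$ and $\delta(H)\geq r$) being a minor of $L_t$, and you cannot import ``$L_t$ is $K_{r+1}$-minor free'' from elsewhere, since the paper derives that fact precisely from this lemma.

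Two secondary remarks. First, your worry about ``extensions of the form $\{v,y_u\}$ of a singleton across the matching gap'' is vacuous: $v$ and its matched vertex $y_u$ are non-adjacent in $L_t$, so $\{v,y_u\}$ is disconnected and cannot be a branch set. Second, once Claims~(1)--(3) are established, the arithmetic does not actually require any version of statement~(4): the branch sets disjoint from $Y$ number exactly $3q$ (three singletons in each of the $q$ relevant triangles), those meeting $Y$ number exactly $r-1$ (one per vertex of $Y$), and $3q+(r-1)=|V(H)|=r-1+2t^*$ already forces $2t^*=3q$. The paper instead counts inner vertices, which is why it needs statement~(4); either route closes the proof, but your route needs the strengthened statement~(4) and does not have it.
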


\begin{lemma} \label{lemma:oneisnotminor}
Let $r$ be a positive integer with $r \geq 2$.
Let $H$ be a graph of minimum degree at least $r$ such that $H$ is a subgraph of $K_{r-1} \vee tK_2$ for some positive integer $t$.
Then either 
	\begin{enumerate}
		\item $H$ is not a minor of $K_{r-2} \vee tK_3$ for any positive integer $t$, or 
		\item $r \geq 4$ and $H$ is not a minor of $L_t$ for any positive integer $t$, or
		\item $r \in \{2,3\}$ and $H=K_{r+1}$.
	\end{enumerate}
\end{lemma}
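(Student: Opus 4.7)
The proof splits according to whether $r \geq 4$ or $r \in \{2,3\}$; in both cases I would assume case~1 fails, i.e., $H$ is a minor of $K_{r-2} \vee tK_3$ for some positive integer $t$, and deduce that case~2 or case~3 holds. Let $t^*$ be the minimum integer with $H \subseteq K_{r-1} \vee t^*K_2$. By Lemma \ref{lemma:Veeminor}, $2t^* = 3q - 1$ for some positive integer $q$. For $r \geq 4$, if case~2 also fails then $H$ is a minor of $L_{t'}$ for some $t'$, so Lemma \ref{lemma:L_tminor} gives $2t^* = 3q'$ for some positive integer $q'$; combining the two yields $3q - 1 = 3q'$, which is impossible modulo~$3$. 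Hence case~2 holds.

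For $r = 2$, we have $K_{r-2} \vee tK_3 = tK_3$, and every minor of $tK_3$ is a disjoint union of graphs from $\{K_1, K_2, P_3, K_3\}$. The hypothesis $\delta(H) \geq 2$ forces $H = mK_3$ for some $m \geq 1$. Since every triangle in $K_1 \vee t^*K_2$ must use the apex (as $t^*K_2$ is triangle-free), the relation $H \subseteq K_1 \vee t^*K_2$ admits at most one disjoint triangle, so $m = 1$ and $H = K_3 = K_{r+1}$.

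For $r = 3$, the plan is first to prove the structural statement $H = K_1 \vee mK_3$ for some $m \geq 1$. Since $tK_3$ has maximum degree~$2$, any minor of $tK_3$ has maximum degree at most~$2$, which is incompatible with $\delta(H) \geq 3$; hence the apex of $K_1 \vee tK_3$ must lie in some branch set, and contracting yields a vertex $u_0 \in V(H)$ for which the ``full'' minor $H^+$ has $u_0$ adjacent to every other vertex (because the apex dominates the host graph). Writing $H^+ = K_1 \vee H'$ with $H'$ a minor of $tK_3$, the components of $H'$ lie in $\{K_1, K_2, P_3, K_3\}$. Each $u \in V(H) \setminus \{u_0\}$ satisfies $\deg_H(u) \leq 1 + \deg_{H'}(u) \leq 1 + 2 = 3$, and combined with $\delta(H) \geq 3$ this forces $u_0 u \in E(H)$ and $\deg_{H - u_0}(u) = 2$, so every component of $H - u_0$ has all vertices of degree~$2$ inside one of $K_1, K_2, P_3, K_3$ and must therefore be $K_3$. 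Thus $H = K_1 \vee mK_3$. A triangle-counting argument inside $K_2 \vee t^*K_2$ parallel to the $r = 2$ case then yields $m = 1$: the apex of $H$ has degree $3m$, so for $m \geq 2$ it must map to one of the two dominating vertices of $K_2 \vee t^*K_2$; after deleting that image, the remaining graph is $K_1 \vee t^*K_2$, whose triangles all share the single remaining dominating vertex, so at most one disjoint triangle fits among the remaining $3m$ vertices. Hence $m = 1$ and $H = K_4 = K_{r+1}$.

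The main obstacle is the structural step for $r = 3$ that isolates $H = K_1 \vee mK_3$ from the minor and minimum-degree hypotheses; the bookkeeping around the ``full'' minor $H^+$ versus $H$ needs some care to ensure that the required apex-to-leaf edges are actually present in $H$ (and not merely in $H^+$). Once that structural lemma is in hand, both the final subgraph-embedding step for $r\in\{2,3\}$ and the modular-arithmetic contradiction for $r \geq 4$ are short.
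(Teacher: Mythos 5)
Your proof is correct, and for $r \geq 4$ and $r = 2$ it follows essentially the same route as the paper (the mod-$3$ incompatibility from Lemmas \ref{lemma:Veeminor} and \ref{lemma:L_tminor}, and a direct subgraph argument for $r=2$). Where you diverge is the $r=3$ case, and the comparison is worth noting. The paper observes that for $r\in\{2,3\}$ every minor of $K_{r-2}\vee tK_3$ is already a subgraph of $K_{r-2}\vee t'K_3$ for some $t'$; for $r=3$ this turns a minor hypothesis into two subgraph containments $H\subseteq K_2\vee tK_2$ and $H\subseteq K_1\vee tK_3$, from which one reads off that $H$ is $2$-connected from the first and that $H$ has a cut-vertex (or $H=K_4$) from the second, and is done in two lines. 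You instead unpack the minor directly via branch sets: you locate the branch set $B_{u_0}$ containing the apex, pass to the ``full'' minor $H^+$, bound $\deg_H(u)\leq 1+\deg_{H'}(u)\leq 3$, force $H-u_0 = mK_3$, and then run a triangle-counting argument in $K_2\vee t^*K_2$ to get $m=1$. This is correct --- the careful distinction between $H$ and $H^+$ that you flag is handled properly, since $\deg_H(u)\geq 3$ together with the upper bound of $3$ forces equality, which in particular forces $u_0u\in E(H)$ --- but it is appreciably longer than the paper's route. The shortcut buys you a cleaner structural argument and avoids any book-keeping with branch sets; your version has the advantage of not requiring the (true but needing a sentence of justification) observation that minors of $K_1\vee tK_3$ are subgraphs of some $K_1\vee t'K_3$.
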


Now we are ready to improve Lemma \ref{critical exponent 1-w} by using more precise descriptions.
Lemmas \ref{critical exponent H min deg-w} and \ref{lemma:r-1Vee2} are essentially equivalent to our main results stated in Section \ref{subsec:ourresult}.
We will prove how to derive the main results in Section \ref{subsec:ourresult} from these two lemmas after they are proved.

\begin{lemma} \label{critical exponent H min deg-w}
Let $r$ be a positive integer with $r \geq 2$.
Let $H$ be a graph of minimum degree at least $r$.
Then $p_{\M(H)}^{\D_r}=\Theta(n^{-1/q_H})$, where $q_H$ is defined as follows.
	\begin{enumerate}
		\item If $H$ is not a subgraph of $K_r \vee I_t$ for any positive integer $t$, then $q_H=r$.
		\item If $H$ is a subgraph of $K_r \vee I_t$ for some positive integer $t$, and $H$ is not a subgraph of $K_{r-1} \vee tK_2$ for any positive integer $t$, then $q_H=2r-1$.
		\item If $H$ is a subgraph of $K_r \vee I_t$ and is a subgraph of $K_{r-1} \vee tK_2$ for some positive integer $t$, and $H \neq K_{r+1}$, then $q_H=s+1$, where $s$ is the largest integer with $0 \leq s \leq {r+1 \choose 2}$ such that for every integer $s'$ with $0 \leq s' \leq s$, every connected graph $F_0$ and every graph $F \in {\mathcal F}(I_{r-1},F_0,r)$ of type $s'$, $H$ is a minor of $F \wedge_t I$ for some positive integer $t$, where $I$ is the heart of $F$. 
Furthermore, $2r-1 \leq s+1 \leq {r+1 \choose 2}$. 
		\item If $H=K_{r+1}$ and $r \leq 3$, then $q_H=\infty$; if $H=K_{r+1}$ and $r \geq 4$, then $q_H=3r-3$.
	\end{enumerate}
Moreover, $p_{\M(H)}^{\chi_r^\ell}=\Theta(n^{-1/q_H})$ for Statements 1, 2 and 4.
\end{lemma}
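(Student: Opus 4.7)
The plan is to deduce the four cases from Lemma \ref{critical exponent 1-w} (for lower bounds) combined with Corollary \ref{upper bound threshold} (for upper bounds). The first observation is that $\delta(H) \geq r$ together with $H \subseteq K_{r-w+1} \vee I_t$ forces $w \leq 1$, since any vertex of $H$ sitting in the copy of $I_t$ has degree at most $r-w+1$. Hence Case 1 matches Statement 1 of Lemma \ref{critical exponent 1-w} directly and yields $\Theta(n^{-1/r})$ for both $\D_r$ and $\chi_r^\ell$; Cases 2--4 fall inside Statement 2 of Lemma \ref{critical exponent 1-w} with $w = 1$, and Case 2 follows immediately from Statement 2(a), giving $\Theta(n^{-1/(2r-1)})$ for both properties.

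For Case 3, Statement 2(b) of Lemma \ref{critical exponent 1-w} with $w = 1$ delivers the lower bound $\Omega(n^{-1/q_H})$ with $q_H = \max\{\min\{s+1, \binom{r+1}{2}\}, 2r-1\}$. The claimed identity $q_H = s+1$ reduces to verifying the furthermore inequalities $2r-1 \leq s+1 \leq \binom{r+1}{2}$. The first inequality is vacuous: any $F \in \mathcal{F}(I_{r-1}, F_0, r)$ must have $|V(F_0)| \geq 2$ (a singleton vertex in $F_0$ would need $r$ neighbors but only $r-1$ are available in $I_{r-1}$), and $|V(F_0)| = 2$ forces type exactly $2r-1$, so no standard triples of type at most $2r-2$ exist and $s \geq 2r-2$ holds vacuously. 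The second inequality uses Lemma \ref{lemma:oneisnotminor} to produce, when $H \neq K_{r+1}$, a standard triple of type at most $3r-3 \leq \binom{r+1}{2}$ whose $F \wedge_t I$ is $H$-minor-free --- the canonical such witness being $(3r-3, K_3, L)$ when $H$ fails to be a minor of any $L_t$. The matching upper bound $O(n^{-1/(s+1)})$ then comes from Corollary \ref{upper bound threshold} Statement 4, applied to a standard triple of type $s+1$ supplied by the maximality of $s$.

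For $H = K_{r+1}$ and $r \leq 3$, the class $\M(K_{r+1})$ consists entirely of $(r-1)$-degenerate graphs (forests when $r = 2$, series-parallel graphs when $r = 3$), giving $p_{\M(H)}^{\D_r} = \Theta(1)$ and likewise for $\chi_r^\ell$. For $r \geq 4$, the upper bound $O(n^{-1/(3r-3)})$ for both properties follows from Corollary \ref{upper bound threshold} Statement 3 applied to $L_t$, which is $K_{r+1}$-minor-free by Lemma \ref{lemma:L_tminor} (with $t^* = 1$ the divisibility condition $2 = 3q$ fails). The lower bound applies Lemma \ref{critical exponent 1-w} Statement 2(b) with $w = 1$ and requires computing $s = 3r-4$ exactly. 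The type analysis from the previous paragraph shows that for $r \geq 4$ the only standard triples with type in $[2r-1, 3r-4]$ are of the form $(2r-1, K_2, I_{r-1} \vee K_2)$, since any $F_0$ with $|V(F_0)| \geq 3$ forces type $\geq 3r-3$; an explicit construction then realises $K_{r+1}$ as a minor of $I_{r-1} \vee tK_2$ for $t \geq \binom{r-1}{2}+1$, using $\binom{r-1}{2}$ copies of $K_2$ to manufacture the internal edges of the desired $K_{r-1}$ via contraction, plus one remaining $K_2$ to supply the final two branch sets. This gives $s \geq 3r-4$, and combining with the $L$-witness of type $3r-3$ yields $s = 3r-4$ and $q_H = 3r-3$. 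The $\chi_r^\ell$ bounds in Cases 1, 2, 4 combine the $\D_r$ lower bounds via Proposition \ref{relation three properties} with the non-$r$-choosability of the corresponding witness sequences established in Lemmas \ref{nonchoosable}, \ref{claim:UBd}, \ref{claim:UB2}, and \ref{claim:UB3}.

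The main obstacle running through the proof is the Case 3 upper inequality $s+1 \leq \binom{r+1}{2}$: when $H$ is a minor of every $L_t$ the canonical $L$-witness fails, and one must extract a standard triple with heart $I_{r-1}$ from the alternative $K_{r-2} \vee tK_3$ obstruction supplied by Lemma \ref{lemma:oneisnotminor} --- which itself does not directly have $I_{r-1}$ as its heart, so the translation into a usable standard triple is delicate. The explicit branch-set construction embedding $K_{r+1}$ into $I_{r-1} \vee tK_2$ driving Case 4 is the other combinatorially nontrivial step.
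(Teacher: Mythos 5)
Your overall strategy is correct and mirrors the paper's: reduce to $w=1$ via $\delta(H)\geq r$, then pull the lower bounds from Lemma~\ref{critical exponent 1-w} and the upper bounds from Corollary~\ref{upper bound threshold}. Cases 1, 2, and the $\Theta(1)$ parts of Case 4 are handled the same way as the paper, and your $r\geq 4$ part of Case 4 (showing $s\geq 3r-4$ by noting type-$\leq 3r-4$ triples force $F_0=K_2$, plus the $L_t$ upper-bound witness with $t^*=1$) also matches.

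Where you genuinely diverge is the Case~3 inequality $s+1\leq\binom{r+1}{2}$. The paper does not invoke Lemma~\ref{lemma:oneisnotminor} at all here; it instead observes that the disjoint union $F = I_{r-1}\ \dot\cup\ K_{r+1}$ is a standard triple with $F_0 = K_{r+1}$ of type $\binom{r+1}{2}$, and that $H$ cannot be a minor of $F\wedge_t I = I_{r-1} \cup tK_{r+1}$ (because $\delta(H)\geq r$ would force $H$ to be a disjoint union of $K_{r+1}$'s, which together with $\tau(H)\leq r$ forces $H = K_{r+1}$, contradiction). That one-paragraph argument immediately gives $s\leq\binom{r+1}{2}-1$. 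The paper saves the finer $L_t$ / $K_{r-2}\vee tK_3$ analysis (your Lemma~\ref{lemma:oneisnotminor} route) for the separate Lemma~\ref{lemma:r-1Vee2}, where the exact bound $s=3r-4$ is needed for Theorem~\ref{thm: main critical exponent 1}.

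Your route is legitimate and gives a tighter bound, but you leave a real gap in exactly the place you flag as ``delicate'': when $H$ is a minor of $L_t$ for large $t$, Lemma~\ref{lemma:oneisnotminor} hands you $K_{r-2}\vee tK_3$, which has the wrong heart. The fix (used by the paper in Lemma~\ref{lemma:r-1Vee2}) is to take instead $L'_t = (I_{r-2}\vee tK_3) + K_1$, i.e.\ $I_{r-2}\vee tK_3$ together with one added isolated vertex. Then $L'_t = F'\wedge_t I'$ for $F'\in\mathcal{F}(I_{r-1},K_3,r)$ of type $3r-3$, where $I'$ is the $r-1$ copies of the $I_{r-2}$ vertices and the extra isolated vertex. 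And since $\delta(H)\geq r\geq 2$, $H$ has no isolated vertex, so $H$ is a minor of $L'_t$ if and only if it is a minor of $I_{r-2}\vee tK_3$, which is a subgraph of $K_{r-2}\vee tK_3$. Hence $H$ not being a minor of $K_{r-2}\vee tK_3$ forces $H$ not a minor of $L'_t$, and $L'_t$ is the desired standard-triple witness. Once you add this, your Case~3 argument closes; alternatively you could adopt the paper's disjoint-union witness, which is shorter for the purposes of this lemma.

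One small inaccuracy: your construction realizes $K_{r+1}$ as a minor of $I_{r-1}\vee tK_2$ with $t\geq\binom{r-1}{2}+1$ by spending one $K_2$ per missing edge of $K_{r-1}$; this works but is wasteful. Contracting a single $K_2$-vertex $u$ (adjacent to all of $I_{r-1}$) into one $I_{r-1}$-vertex $z_i$ makes $z_i$ adjacent to all other $z_j$ at once, so $r-2$ contractions plus one intact $K_2$ (hence $t=r-1$) already suffice. This does not affect correctness, but the claim ``using $\binom{r-1}{2}$ copies to manufacture the internal edges'' understates the payoff of each contraction.
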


\begin{proof}
Statement 1 immediately follows from Statement 1 of Lemma \ref{critical exponent 1-w}.

So we may assume that $H$ is a subgraph of $K_r \vee I_t$ for some positive integer $t$.
Since $H$ has minimum degree at least $r$, $H$ is not a subgraph of $K_{r-1} \vee I_t$ for any positive integer $t$.
So $1$ equals the largest integer $w$ with $1 \leq w \leq r$ such that $H$ is a subgraph of $K_{r-w+1} \vee I_t$ for some positive integer $t$.
Let $w=1$.

If $H$ is not a subgraph of $K_{r-1} \vee tK_2 = K_{r-w} \vee tK_{w+1}$ for any positive integer $t$, then $p_{\M(H)}^{\D_r}=\Theta(n^{-1/q_H})$ and $p_{\M(H)}^{\chi_r^\ell}=\Theta(n^{-1/q_H})$, where $q_H = 2r-1$ by Statement 2(a) in Lemma \ref{critical exponent 1-w}.
So Statement 2 of this lemma holds.

Hence we may assume that $H$ is a subgraph of $K_{r-1} \vee tK_2$ for some positive integer $t$.

Now we assume that $H \neq K_{r+1}$ and prove Statement 3 of this lemma.
By Lemma \ref{critical exponent 1-w}, $p_{\M(H)}^{\D_r}=\Omega(n^{-1/q_H})$, where $q_H=\max\{\min\{s+1, {r+1 \choose 2}\},2r-1\}$ and $s$ is the largest integer with $0 \leq s \leq {r+1 \choose 2}$ such that for every integer $s'$ with $0 \leq s' \leq s$, every connected graph $F_0$ and every graph $F \in {\mathcal F}(I_{r-1},F_0,r)$ of type $s'$, $H$ is a minor of $F \wedge_t I$ for some positive integer $t$, where $I$ is the heart of $F$.
For every positive integer $t$, define $F_t$ to be the graph that is the disjoint union of $I_{r-1}$ and $t$ copies of $K_{r+1}$.
Clearly, for every positive integer $t$, $F_t=F \wedge_t I$ for some $F \in {\mathcal F}(I_{r-1},K_{r+1},r)$ of type ${r+1 \choose 2}$.
Suppose that $H$ is a minor of $F_t$ for some positive integer $t$.
Since the minimum degree of $H$ is at least $r$, $H$ is a disjoint union of copies of $K_{r+1}$.
On the other hand, since $H$ is a subgraph of $K_r \vee I_t$ for some positive integer $t$, one can delete at most $r$ vertices to make $H$ edgeless.
Therefore $H$ is one copy of $K_{r+1}$.
That is, $H = K_{r+1}$, a contradiction.
So $H$ is not a minor of $F_t$ for some positive integer $t$.
In particular, $s \leq {r+1 \choose 2}-1$.
Hence, by the maximality of $s$, there exists a connected graph $F_0^*$ and a graph $F^* \in {\mathcal F}(I_{r-1},F_0^*,r)$ of type $s+1 \leq {r+1 \choose 2}$ such that $H$ is not a minor of $F^* \wedge_t I$ for any positive integer $t$, where $I$ is the heart of $F^*$.
Therefore, $\{F^* \wedge_t I: t  \in {\mathbb N}\} \subseteq \M(H)$.
By Statement 4 of Corollary \ref{upper bound threshold}, $p_{\M(H)}^{\D_r} = O(n^{-1/(s+1)})$.
In addition, since $F^* \in {\mathcal F}(I_{r-1},F_0^*,r)$, $\lvert V(F_0^*) \rvert \geq 2$.
Note that for any two vertices in $F_0^*$, there are at least $r+(r-1)=2r-1$ edges of $F^*$ incident with them.
So $s+1 \geq 2r-1$.
Hence $\max\{\min\{s+1,{r+1 \choose 2}\},2r-1\}=s+1$ and $p_{\M(H)}^{\D_r}=\Omega(n^{-1/(s+1)})$ and hence $p_{\M(H)}^{\D_r}=\Theta(n^{-1/(s+1)})$.
This proves Statement 3.

Now we assume that $H=K_{r+1}$ and prove Statement 4.

So $H$ is a subgraph of $K_r \vee I_t$ and $K_{r-1} \vee tK_2$ for some positive integer $t$. Recall that $w=1$. 
Note that for every nonnegative integer $s'$, connected graph $F_1$ and graph $F' \in {\mathcal F}(I_{r-w},F_1,r)$ of type $s'$, if $\lvert V(F_1) \rvert \geq 3$, then $s' \geq 3r-3$ since for any $S \subseteq V(F_1)$ with $\lvert S \rvert=3$, there are at least $3r-{3 \choose 2}=3r-3$ edges of $F'$ incident with $S$.
So if $F_1$ is a connected graph and $F'$ is a member of ${\mathcal F}(I_{r-w},F_1,r)$ of type at most $3r-4$, then $\lvert V(F_1) \rvert \leq 2$, so $\lvert V(F_1) \rvert=2$ since $w=1$, and hence $F'=I_{r-1} \vee K_2$.
Hence for every nonnegative integer $s'$ with $0 \leq s' \leq 3r-4$, connected graph $F_1$ and graph $F' \in {\mathcal F}(I_{r-w},F_1,r)$ of type $s'$, $H$ is a minor of $F' \wedge_t I$ for some positive integer $t$, where $I$ is the heart of $F'$.
Therefore, by Statement 2(b) in Lemma \ref{critical exponent 1-w}, $p_{\M(H)}^{\D_r} = \Omega(n^{-1/q})$, where $q \geq \max\{\min\{3r-4+1,{r+1 \choose 2}\},2r-1\} = \max\{3r-3,2r-1\}=3r-3$, since $r \geq 2$.

If $r=2$, then every $H$-minor free graph is a forest and does not contain any subgraph of minimum degree at least two, thus $G$ itself (which is also $G(p)$ where $p$ is the constant function $p=1$) is already $1$-degenerate, so $p_{\M(H)}^{\D_r} = \Theta(1)$.
If $r=3$, then $H=K_4$, and by \cite{Dirac}, every $K_4$-minor free graph contains a vertex of degree at most two, so no subgraph of any $H$-minor free graph has minimum degree at least $r= 3$, and hence $p_{\M(H)}^{\D_r} = \Theta(1)$.
Recall that $p_{\M(H)}^{\chi_r^\ell}=\Theta(1)$ when $p_{\M(H)}^{\D_r}=\Theta(1)$ by Proposition \ref{relation three properties}.

Hence we may assume that $r \geq 4$.
Since $K_{r+1} = K_{r-1} \vee K_2$, $L_t$ is $K_{r+1}$-minor free by Lemma \ref{lemma:L_tminor}.
Hence $p_{\M(H)}^{\D_r}=O(n^{-1/(3r-3)})$ and $p_{\M(H)}^{\chi_r^\ell}=O(n^{-1/(3r-3)})$ by Statement 3 of Corollary \ref{upper bound threshold}.
This completes the proof.
\end{proof}

\begin{lemma} \label{lemma:r-1Vee2}
Let $r \geq 2$ be an integer.
Let $H$ be a graph with $\delta(H) \geq r$.
If $H \neq K_{r+1}$ and $H$ is a subgraph of $K_{r-1} \vee tK_2$ and a subgraph of $K_r \vee I_t$ for some positive integer $t$, then $p_{\M(H)}^{\D_r} = \Theta(n^{-1/(3r-3)})$ and $p_{\M(H)}^{\chi_r^\ell} = \Theta(n^{-1/(3r-3)})$.
\end{lemma}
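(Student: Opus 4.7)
The plan is to apply Statement~3 of Lemma~\ref{critical exponent H min deg-w}, whose hypotheses match ours exactly and which yields $p_{\M(H)}^{\D_r}=\Theta(n^{-1/(s+1)})$ for the parameter $s$ introduced there. It therefore suffices to pin down $s+1=3r-3$, and then the $\chi_r^\ell$ analogue will follow by sandwiching between Proposition~\ref{relation three properties} and Corollary~\ref{upper bound threshold}.

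For the lower bound $s\geq 3r-4$, I would show that every $F\in{\mathcal F}(I_{r-1},F_0,r)$ of type at most $3r-4$ is forced to be $F=I_{r-1}\vee K_2$, and that $H$ is a minor of $F\wedge_t I=I_{r-1}\vee tK_2$ for all sufficiently large $t$. The rigidity of $F$ comes from a case split on $|V(F_0)|$: the case $|V(F_0)|=1$ cannot occur because a single vertex of $F_0$ cannot attain degree $r$ with only $r-1$ candidates available in $V(I_{r-1})$; the case $|V(F_0)|\geq 3$ is excluded because any three vertices of $F_0$ have degree sum at least $3r$ while inducing at most $\binom{3}{2}=3$ edges, forcing type $\geq 3r-3$; and in the remaining case $|V(F_0)|=2$ the same degree argument rules out $F_0=I_2$ and then forces $F=I_{r-1}\vee K_2$. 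The minor verification is explicit: for each $w_i\in V(I_{r-1})$ pick a dedicated copy $\{a_i,b_i\}$ of $K_2$ and take $\{w_i,a_i,b_i\}$ as a branch set; the remaining $t-(r-1)$ copies of $K_2$ then give a minor isomorphic to $K_{r-1}\vee(t-r+1)K_2$, which contains $H$ for $t$ large enough by the hypothesis $H\subseteq K_{r-1}\vee t^*K_2$.

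For the matching upper bound $s\leq 3r-4$ and the $\chi_r^\ell$ upper bound, I would invoke Lemma~\ref{lemma:oneisnotminor}: since $H\neq K_{r+1}$, either (A)~$H$ is not a minor of $K_{r-2}\vee tK_3$ for any $t$, or (B)~$r\geq 4$ and $H$ is not a minor of $L_t$ for any $t$. In case~(B), the graph $L$ of Definition~\ref{dfn:Lt} is a member of ${\mathcal F}(I_{r-1},K_3,r)$ of type exactly $3r-3$ with $L\wedge_t I=L_t$, so it directly witnesses $s\leq 3r-4$, and Statement~3 of Corollary~\ref{upper bound threshold} applied to $\{L_t\}\subseteq\M(H)$ yields the $O(n^{-1/(3r-3)})$ upper bound for both $\D_r$ and $\chi_r^\ell$. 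In case~(A), I would construct $F_1\in{\mathcal F}(I_{r-1},K_3,r)$ whose three $K_3$-vertices are each adjacent to a common $(r-2)$-subset of $V(I_{r-1})$; its type is $3+3(r-2)=3r-3$, and $F_1\wedge_t I$ is $I_{r-2}\vee tK_3$ together with one isolated vertex. Since $\delta(H)\geq 2$ forbids placing that isolated vertex in any branch set, $H$ is a minor of $F_1\wedge_t I$ iff it is a minor of $I_{r-2}\vee tK_3\subseteq K_{r-2}\vee tK_3$, which is precluded by~(A); simultaneously $\{I_{r-2}\vee tK_3\}\subseteq\M(H)$ and Statement~2 of Corollary~\ref{upper bound threshold} with $w=2$ deliver the same $O(n^{-1/(3r-3)})$ upper bound for both properties. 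The only genuinely subtle step will be the case-(A) construction, where Lemma~\ref{lemma:oneisnotminor} hands us an obstruction relative to $K_{r-2}$ and one must repackage it as a member of ${\mathcal F}(I_{r-1},K_3,r)$ by parking the extra vertex of $I_{r-1}$ as an isolated vertex of $F_1$.
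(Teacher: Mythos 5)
Your proposal is correct and follows essentially the same route as the paper: both reduce to Statement~3 of Lemma~\ref{critical exponent H min deg-w}, pin down $s=3r-4$ via the $|V(F_0)|\leq 2$ rigidity argument, and treat the two cases from Lemma~\ref{lemma:oneisnotminor} using $L_t$ (for $r\geq 4$) and the $I_{r-2}\vee tK_3$-plus-isolated-vertex witness (realized in the paper as $L'_t$ built from some $F'\in{\mathcal F}(I_{r-1},K_3,r)$ of type $3r-3$), with the same split between Statements~2 and~3 of Corollary~\ref{upper bound threshold} for the $\chi_r^\ell$ upper bound. The only cosmetic difference is that you spell out the $K_{r-1}\vee(t-r+1)K_2$-minor construction and the $F_1$ construction explicitly where the paper is terser, and your aside about ruling out $F_0=I_2$ is redundant since $F_0$ is connected by definition.
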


\begin{proof}
Let $t^*$ be the minimum positive integer such that $H$ is a subgraph of $K_{r-1} \vee t^* K_2$.
Since $\delta(H) \geq r$, $H$ can be obtained from $K_{r-1} \vee t^*K_2$ by deleting a set $S$ of edges contained in $K_{r-1}$.
Let $s$ be the largest integer with $0 \leq s \leq {r+1 \choose 2}$ such that for every integer $s'$ with $0 \leq s' \leq s$, every connected graph $F_0$ and every graph $F \in {\mathcal F}(I_{r-1},F_0,r)$ of type $s'$, $H$ is a minor of $F \wedge_t I$ for some positive integer $t$, where $I$ is the heart of $F$.
We shall prove that $s=3r-4$.

Suppose to the contrary that $s \leq 3r-5$.
Since $r \geq 2$, $3r-5 \leq {r+1 \choose 2}-1$.
So by the maximality of $s$, there exist an integer $s'$ with $0 \leq s' \leq 3r-5+1$, a connected graph $F_0$ and a graph $F \in {\mathcal F}(I_{r-1},F_0,r)$ of type $s'$ such that $H$ is not a minor of $F \wedge_t I$ for any positive integer $t$, where $I$ is the heart of $F$. 
If $\lvert V(F_0) \rvert \geq 3$, then for any $Z \subseteq V(F_0)$ with $|Z|=3$, there exist at least $|Z|r-{|Z| \choose 2} = 3r-3>s'$ edges of $F$ incident with $Z \subseteq V(F_0)$, a contradiction.
So $|V(F_0)| \leq 2$.
Hence $F_0=K_1$ or $K_2$.
Since the heart of $F$ has size $r-1$, $F_0=K_2$.
So $F = I_{r-1} \vee K_2$. Since $H$ is a subgraph of $K_{r-1} \vee t K_2$ which is a minor of $F \wedge_{t'} I$ where $I$ is the heart of $F$ for sufficiently large $t'$, we have 
$H$ is a minor of $F \wedge_{t'} I$ for some sufficiently large positive integer $t'$, where $I$ is the heart of $F$. This is a contradiction.

So $s \geq 3r-4$.
By Lemma \ref{lemma:oneisnotminor}, either $H$ is not a minor of $K_{r-2} \vee tK_3$ for any positive integer $t$, or $r \geq 4$ and $H$ is not a minor of $L_t$ of any positive integer $t$.
For every positive integer $t$, let $L_t'$ be the graph obtained from $I_{r-2} \vee tK_3$ by adding an isolated vertex.
Since $H$ has no isolated vertex, if $H$ is not a minor of $K_{r-2} \vee tK_3$ for any positive integer $t$, then $H$ is not a minor of $L'_t$ for any positive integer $t$.
Hence either $r \geq 4$ and $H$ is not a minor of $L_t$ for any positive integer $t$, or $H$ is not a minor of $L'_t$ for any positive integer $t$.

Note that for every positive integer $t$, $L_t = F \wedge_t I$ for some $F \in {\mathcal F}(I_{r-1},K_3,r)$ of type $3r-3$, where $I$ is the heart of $F$, and $L'_t = F' \wedge_t I'$ for some $F' \in {\mathcal F}(I_{r-1},K_3,r)$ of type $3r-3$, where $I'$ is the heart of $F'$.
So $s \leq 3r-4$.

Therefore, $s=3r-4$.
By Statement 3 of Lemma \ref{critical exponent H min deg-w}, $p_{\M(H)}^{\D_r} = \Theta(n^{-1/(s+1)})=\Theta(n^{-1/(3r-3)})$.
Hence $p_{\M(H)}^{\chi^\ell_r} = \Omega(n^{-1/(3r-3)})$.
Recall that either $H$ is not a minor of $K_{r-2} \vee tK_3$ for any positive integer $t$, or $r \geq 4$ and $H$ is not a minor of $L_t$ of any positive integer $t$.
So $p_{\M(H)}^{\chi_r^\ell}=O(n^{-1/(3r-3)})$ by Statements 2(a) and 3 of Corollary \ref{upper bound threshold}.
Therefore $p_{\M(H)}^{\chi_r^\ell} = \Theta(n^{-1/(3r-3)})$.
\end{proof}

Now we prove a lower bound for $s_r(H)$, which is part of Theorem \ref{thm:critical exponent lower bound}.
Recall that $s_r(H)$ is defined in Definition \ref{def:sr}.

\begin{lemma} \label{sr_K_c_k}
Let $r$ be a positive integer.
Let $H$ be a graph with $\tau(H) \geq 2$.
Then $s_r(H) \geq {r+1 \choose 2}-\frac{(\tau(H)-1)(\tau(H)-2)}{2}-1 = (r-\tau(H)+2)r - {r-\tau(H)+2 \choose 2}-1$.
\end{lemma}

\begin{proof}
Let $c=\tau(H)$.
Let $w=(r-c+2)r - {r-c+2 \choose 2}-1$.
Note that $w={r+1 \choose 2}-\frac{(c-1)(c-2)}{2}-1$.

Suppose to the contrary that $s_r(H) <w$. 
So there exist an integer $s'$ with $0 \leq s' \leq w$, a connected graph $F_0$ and a graph $F \in {\mathcal F}(I_{c-1},F_0,r)$ of type $s'$ such that $H$ is not a minor of $F \wedge_t I$ for any positive integer $t$, where $I$ is the heart of $F$.
Since every vertex in $F_0$ has degree in $F$ at least $r$, and every edge in $F$ is incident with some vertex in $F_0$, we have $\lvert V(F_0) \rvert \cdot r \leq \sum_{v \in V(F_0)}\deg_F(v) = \lvert E(F) \rvert + \lvert E(F_0) \rvert \leq 2s' < 2{r+1 \choose 2}$, so $\lvert V(F_0) \rvert < r+1$.
Since every vertex in $F_0$ has degree in $F$ at least $r$, $\lvert V(F) \rvert \geq r+1$, so $\lvert V(F_0) \rvert = \lvert V(F) \rvert-(c-1) \geq r+1-(c-1)=r+2-c$. 
Hence $r+2-c \leq \lvert V(F_0) \rvert \leq r$.

For every $i \in [c-1]$, let $A_i = \{v \in V(F_0): \deg_{F_0}(v)=r-i\}$.
Since every vertex in $F_0$ has degree in $F$ at least $r$, we know that for every $i$, every vertex in $A_i$ is incident with at least $i$ edges in $E(F)-E(F_0)$.
Since $\lvert V(F)-V(F_0) \rvert = c-1$, $\bigcup_{i=1}^{c-1}A_i=V(F_0)$. 
And note that if $A_i \neq \emptyset$, then $r-i \leq \lvert V(F_0) \rvert-1$, so $i \geq r+1-\lvert V(F_0) \rvert$.
So $V(F_0)=\bigcup_{i=1}^{c-1}A_i=\bigcup_{i=r+1-\lvert V(F_0) \rvert}^{c-1}A_i$.

Hence 
\begin{align*} 
	s' = \lvert E(F) \rvert \geq \lvert E(F_0) \rvert + \sum_{i=1}^{c-1}i\lvert A_i \rvert & = \frac{1}{2}\sum_{i=1}^{c-1}(r-i)\lvert A_i \rvert + \sum_{i=1}^{c-1}i\lvert A_i \rvert  \\
	& = \frac{r+1}{2}\sum_{i=1}^{c-1}\lvert A_i \rvert + \frac{1}{2}\sum_{i=1}^{c-1}(i-1)\lvert A_i \rvert \\
	& = \frac{r+1}{2}\lvert V(F_0) \rvert + \frac{1}{2}\sum_{i=1}^{c-1}(i-1)\lvert A_i \rvert \\
	& = \frac{r+1}{2}\lvert V(F_0) \rvert + \frac{1}{2}\sum_{i=r+1-\lvert V(F_0) \rvert}^{c-1}(i-1)\lvert A_i \rvert \\
	& \geq \frac{r+1}{2}\lvert V(F_0) \rvert + \frac{r-\lvert V(F_0) \rvert}{2}\sum_{i=r+1-\lvert V(F_0) \rvert}^{c-1}\lvert A_i \rvert \\
	& = \frac{2r+1-\lvert V(F_0) \rvert}{2}\lvert V(F_0) \rvert.
\end{align*}
Since $r+2-c \leq \lvert V(F_0) \rvert \leq r$, 
\begin{align*} 
s' \geq & \frac{2r+1-\lvert V(F_0) \rvert}{2}\lvert V(F_0) \rvert \geq \frac{(2r+1-(r+2-c))(r+2-c)}{2}  \\
= & \frac{(r-1+c)(r+2-c)}{2} = \frac{r^2+r-(c-1)(c-2)}{2} = {r+1 \choose 2} - \frac{(c-1)(c-2)}{2}>w,
\end{align*} a contradiction.
\end{proof}

Now we are ready to prove Theorems \ref{thm: main critical exponent 1} and \ref{thm:critical exponent lower bound} and Corollary \ref{cor:many_intro}. 
We first show a simple connection between vertex-cover and subgraphs of $K_s \vee I_t$ for some integers $s,t$.
 
\begin{lemma}\label{claim:tauHequiv}
Let $r,w,t$ be nonnegative integers such that $r \geq 1$ and $r \geq w \geq 0$.
Then the following two statements are equivalent:
\begin{enumerate}
\item $H$ is a subgraph of $K_{r-w+1} \vee I_t$ for some positive integer $t$ but not a subgraph of $K_{r-w} \vee I_t$ for any positive integer $t$; 
\item $\tau(H) = r-w+1$.
\end{enumerate}
\end{lemma}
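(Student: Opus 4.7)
The plan is to prove the intermediate claim that for any nonnegative integer $s$, the graph $H$ is a subgraph of $K_s \vee I_t$ for some positive integer $t$ if and only if $\tau(H) \leq s$. Once this is established, the lemma follows immediately: Statement (1) asserts $\tau(H) \leq r-w+1$ (from the "is a subgraph" half) together with $\tau(H) > r-w$ (from the negation of the other half), which is exactly Statement (2).

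For the intermediate claim, I would argue both directions directly. For the forward direction, suppose $H$ is a subgraph of $K_s \vee I_t$ under some embedding $\varphi$, and let $A,B$ denote the vertex classes of $K_s \vee I_t$ corresponding to $V(K_s)$ and $V(I_t)$, respectively. Since $B$ is stable in $K_s \vee I_t$, every edge of $H$ has at least one endpoint mapped into $A$, so $\varphi^{-1}(A) \cap V(H)$ is a vertex-cover of $H$ of size at most $|A| = s$. Hence $\tau(H) \leq s$. For the reverse direction, suppose $\tau(H) \leq s$ and let $S$ be a vertex-cover of $H$ with $|S| \leq s$. Then $V(H) - S$ is a stable set in $H$; set $t = |V(H) - S|$ (or any larger integer) and embed $H$ into $K_s \vee I_t$ by mapping $S$ into $|S|$ distinct vertices of the $K_s$ part (padding if $|S| < s$) and mapping $V(H) - S$ bijectively into $t$ distinct vertices of the $I_t$ part. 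Every edge of $H$ either lies inside $S$ (sent to an edge of the clique $K_s$) or is between $S$ and $V(H)-S$ (sent to an edge between the two parts, which is present by the definition of the join), so the map is an embedding.

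Given the intermediate claim, Statement (1) is the conjunction "$\tau(H) \leq r-w+1$ and $\tau(H) \not\leq r-w$," which is equivalent to $\tau(H) = r-w+1$, i.e., Statement (2). Conversely, $\tau(H) = r-w+1$ gives both halves of Statement (1). Thus the two statements are equivalent.

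This proof is essentially routine bookkeeping about joins and vertex covers, and I do not foresee any serious obstacle; the only point to be mildly careful about is handling the case $|S| < s$ in the reverse direction (where one pads $S$ with extra vertices into $K_s$) and the edge case $\tau(H) = 0$ (in which $H$ is edgeless and trivially a subgraph of $K_0 \vee I_t = I_t$ for $t \geq |V(H)|$), but no nontrivial combinatorics enters.
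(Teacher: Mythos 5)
Your proof is correct and takes essentially the same approach as the paper: establish that $H$ is a subgraph of $K_s \vee I_t$ for some $t$ iff $\tau(H) \leq s$, then apply this with $s = r-w+1$ and $s = r-w$. The paper's argument for the intermediate claim (embedding a minimum vertex-cover into the clique and the remaining vertices into the independent part, and conversely) matches yours, with the paper merely omitting the padding/edge-case details you spell out.
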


\begin{proof}
Let $s$ be a nonnegative integer.
Note that if a graph $H$ is a subgraph of $K_s \vee I_k$ for some integer $k$, then $\tau(H) \leq s$. 
On the other hand, if $\tau(H)\leq s$, then $H$ is a subgraph of $K_s \vee I_{k}$ for any sufficiently large integer $k$ by embedding the vertices in a minimum vertex-cover into $K_s$ and the rest of the $|V(H)| - \tau(H)$ vertices to $I_k$. 
Therefore $H$ is a subgraph of $K_{r-w+1} \vee I_t$ for some positive integer $t$ is equivalent with $\tau(H) \leq r-w+1$. 
And $H$ is not a subgraph of $K_{r-w} \vee I_t$ for any positive integer $t$ is equivalent with $\tau(H) > r-w$. 
\end{proof}

\noindent{\bf Proof of Theorem \ref{thm:critical exponent lower bound}:}
Statement 1 follows from Lemma \ref{sr_K_c_k}, so it suffices to show Statement 2.
Since $2 \leq \tau(H) \leq r$, there exists $w$ with $r-1 \geq w \geq 1$ such that $\tau(H)=r-w+1$.
By Lemma \ref{claim:tauHequiv}, $w$ is the largest integer with $r-1 \geq w \geq 1$ such that $H$ is a subgraph of $K_{r-w+1} \vee I_t$ for some positive integer $t$.
Note that $w=r-\tau(H)+1$.
Since $H \in {\mathcal H}_r$, $H$ is a subgraph of $K_{r-w} \vee t^*K_{w+1}$ for some positive integer $t^*$. 
By Statement 2(b) of Lemma \ref{critical exponent 1-w}, $p_{\M(H)}^{\mathcal P} = \Omega(n^{-1/q_H})$, where $q_H=\max\{\min\{s+1,{r+1 \choose 2}\},(w+1)r-{w+1 \choose 2}\}$, where $s$ is the largest integer with $0 \leq s \leq {r+1 \choose 2}$ such that for every integer $s'$ with $0 \leq s' \leq s$, every connected graph $F_0$ and every graph $F \in {\mathcal F}(I_{r-w},F_0,r)$ of type $s'$, $H$ is a minor of $F \wedge_t I$ for some positive integer $t$, where $I$ is the heart of $F$.
Note that $w=r-\tau(H)+1$, so $s=s_r(H)$.
Hence $\max\{\min\{s+1,{r+1 \choose 2}\},(w+1)r-{w+1 \choose 2}\} = \min\{s_r(H)+1,{r+1 \choose 2}\}$ by Statement 1 of this theorem.
\hfill
$\Box$

\bigskip

\noindent{\bf Proof of Theorem \ref{thm: main critical exponent 1}:} 
If $\tau(H) \geq r+1$, then $H$ is not a subgraph of $K_r \vee I_t$ for any positive integer $t$ by Lemma \ref{claim:tauHequiv}, so $p_{\M(H)}^{\D_r}=\Theta(n^{-1/r})$ and $p_{\M(H)}^{\chi_r^\ell}=\Theta(n^{-1/r})$ by Statement 1 of Lemma \ref{critical exponent 1-w}.
So Statement 1 of Theorem \ref{thm: main critical exponent 1} holds.

Now we assume that $1 \leq \tau(H) \leq r$ and $H$ is not a subgraph of $K_{\tau(H)-1} \vee tK_{r+2-\tau(H)}$ for any positive integer $t$.
Since $1 \leq \tau(H) \leq r$, there exists $w$ with $r \geq w \geq 1$ such that $\tau(H)=r-w+1$.
So $H$ is a subgraph of $K_{r-w+1} \vee I_t$ for some positive integer $t$ but is not a subgraph of $K_{r-w} \vee I_t$ for any positive integer $t$ by Lemma \ref{claim:tauHequiv}.
Since $H$ is not a subgraph of $K_{\tau(H)-1} \vee tK_{r+2-\tau(H)}=K_{r-w} \vee tK_{w+1}$ for any positive integer $t$, $p_{\M(H)}^{\D_r} = \Theta(n^{-1/q_H})$ and $p_{\M(H)}^{\chi_r^\ell} = \Theta(n^{-1/q_H})$, where $q_H=(w+1)r-{w+1 \choose 2}$, by Statement 2(a) of Lemma \ref{critical exponent 1-w}.
Hence Statement 2 of Theorem \ref{thm: main critical exponent 1} holds.

Now we assume $\tau(H) \leq r$ and $\delta(H) \geq r$. 
Then $H$ is a subgraph of $K_r \vee I_t$ for some positive integer $t$.
If $H$ is not a subgraph of $K_{r-1} \vee tK_2$ for any positive integer $t$, then $p_{\M(H)}^{\D_r}=\Theta(n^{-1/(2r-1)})$ and $p_{\M(H)}^{\chi_r^\ell}=\Theta(n^{-1/(2r-1)})$ by Statement 2 of Lemma \ref{critical exponent H min deg-w}.
Hence Statement 3 of Theorem \ref{thm: main critical exponent 1} holds.

Now we assume $\tau(H) \leq r$, $\delta(H) \geq r$, and $H$ is a subgraph of $K_{r-1} \vee tK_2$ for some positive integer $t$.
So $H$ is a subgraph of $K_r \vee I_t$ and a subgraph of $K_{r-1} \vee tK_2$ for some positive integer $t$.
If $H \neq K_{r+1}$, then $p_{\M(H)}^{\D_r}=\Theta(n^{-1/(3r-3)})$ and $p_{\M(H)}^{\chi_r^\ell}=\Theta(n^{-1/(3r-3)})$ by Lemma \ref{lemma:r-1Vee2}.
If $H = K_{r+1}$ and $r \geq 4$, then $p_{\M(H)}^{\D_r}=\Theta(n^{-1/(3r-3)})$ and $p_{\M(H)}^{\chi_r^\ell}=\Theta(n^{-1/(3r-3)})$ by Statement 4 of Lemma \ref{critical exponent H min deg-w}.
Hence Statement 4 of Theorem \ref{thm: main critical exponent 1} holds.

Furthermore, if $\tau(H)=0$, then $H$ is edgeless, so every graph on more than $\lvert V(H) \rvert$ vertices contains $H$ as a minor, and hence $p_{\M(H)}^{\D_r}=\Theta(1)$.
If $H$ consists of $K_{1,s}$ and isolated vertices for some $s$ with $1 \leq s \leq r$, then every $H$-minor free graph on more than $\lvert V(H) \rvert$ vertices has maximum degree at most $s-1 \leq r-1$ and hence is $(r-1)$-degenerate, so $\P_{\M(H)}^{\D_r}=\Theta(1)$.
If $H=K_{r+1}$ for $r \leq 3$, then $\P_{\M(H)}^{\D_r}=\Theta(1)$ by Statement 4 of Lemma \ref{critical exponent H min deg-w}.
Recall that $p_{\M(H)}^{\chi_r^\ell}=\Theta(1)$ whenever $p_{\M(H)}^{\D_r}=\Theta(1)$ by Proposition \ref{relation three properties}.
This proves Theorem \ref{thm: main critical exponent 1}. \hfill
$\Box$

\bigskip

Now we prove Corollary \ref{cor:many_intro}.
Note that Corollary \ref{cor_planar_first} is contained in the planar case of Corollary \ref{cor:many_intro}.

\bigskip

\noindent{\bf Proof of Corollary \ref{cor:many_intro}:}
Let $\Sigma$ be a surface other than ${\mathbb S}^2$.
Let $g$ be the Euler genus of $\Sigma$.
Let $\G$ be the class of graphs embeddable in $\Sigma$.
Euler's formula implies that every graph in $\G$ is $\frac{5+\sqrt{1+24g}}{2}$-degenerate, so it is $\frac{7+\sqrt{1+24g}}{2}$-choosable.
(Note that it was also proved by Heawood \cite{h}.)
So the thresholds for $\G$ and for each of $\D_r$, $\chi_\ell^r$ and $\chi_r$ are $\Theta(1)$ when $r \geq \frac{7+\sqrt{1+24g}}{2}$.

And Euler's formula implies that $K_{3,2g+3} \not \in \G$.
So by Proposition \ref{prop:minor-closed-and-MH}, the $p_\G^{\D_r}$ is at least $p^{\D_r}_{\M(K_{3,2g+3})} = \Omega(n^{-1/q_H})$, where $q_H \geq \min\{s_r(K_{3,2g+3})+1, {r+1 \choose 2}\} \geq {r+1 \choose 2}-1$ by Theorem \ref{thm:critical exponent lower bound} (for $r \geq 3$) and Theorem \ref{thm: main critical exponent 1} (for $r=2$). 
By Proposition \ref{relation three properties}, $\Omega(n^{-1/q_H})$ is also a lower bound for $p_\G^{\chi^\ell_r}$ and $p_\G^{\chi_r}$. 

A result of B\"{o}hme, Mohar, and Stiebitz \cite{bms} implies that every graph embeddable in the Klein bottle is 6-choosable.
So the thresholds for $\chi^\ell_r$ and $\chi_r$ are $\Theta(1)$ when $r=6$ and $\Sigma$ is the Klein bottle.
In addition, $I_{r-w} \vee tK_{w+1}$ is planar for every positive integer $t$, when $r=3$ and $w=1$.
Hence by Statement 2 of Corollary \ref{upper bound threshold}, $p_\G^{\D_3}$ and $p_\G^{\chi_3^\ell}$ are both $O(n^{-1/5})$. 
Therefore, the thresholds for $\G$ and for each of $\D_3$ and $\chi_3^\ell$ are both $\Theta(n^{-1/5})$.
And the threshold for being $3$-colorable is $O(n^{-1/6})$ by Statement 4 of Corollary \ref{upper bound threshold} since $K_4 \in {\mathcal F}(I_0,K_4,3)$ with empty heart and $K_4 \wedge_t \emptyset$ is planar for any integer $t$. 
Moreover, $K_{2,t}$ is planar for any integer $t$, so Statement 1 of Corollary \ref{upper bound threshold} implies that the thresholds for $\D_2$ and $\chi_2^\ell$ are both $O(n^{-1/2})$. 
This completes the proof for the case for graphs embeddable in $\Sigma$.

Now we consider the class $\G$ of graphs with Colin de Verdi\`{e}re parameter $\mu \leq k$, for some fixed integer $k \geq 2$.
It is known \cite{hls} that for every positive integer $s$ and $t \geq \max\{s,3\}$, $\mu(K_{s,t})=s+1$.
So $K_{k,k+2} \not \in \G$.
Note that $\tau(K_{k,k+2})=k$.
By Propositions \ref{prop:minor-closed-and-MH} and \ref{relation three properties}, $p^{\D_r}_{\M(K_{k,k+2})}$ is a lower bound for the threshold for $\D_r,\chi_\ell^r,\chi^r$ and for $\G$.
By Theorem \ref{thm: main critical exponent 1}, if $r \leq k-1$, then the threshold for $\D_r$ and $\G$ is at least $p^{\D_r}_{\M(K_{k,k+2})} = \Omega(n^{-1/r})$; if $r=k$, then since $K_{k,k+2}$ has minimum degree at least $k=r$, and $K_{k,k+2}$ is not a subgraph of $K_{r-1} \vee tK_2$ for any positive integer $t$, we know the $p_\G^{\D_r} \geq p^{\D_r}_{\M(K_{k,k+2})} = \Omega(n^{-1/(2r-1)})$.
And if $r \leq k-1$, then $\mu(K_{r,t})=r+1 \leq k$ for every sufficiently large integer $t$, so $p_\G^{\D_r}=O(n^{-1/r})$ and $p_\G^{\chi^\ell_r}=O(n^{-1/r})$ by Statement 1 of Corollary \ref{upper bound threshold}.
Note that for every integer $t$, $K_{k-1} \vee tK_2$ can be obtained from a union of $t$ disjoint copies of $K_{k+1}$ by identifying a clique on $k-1$ vertices, so $\mu(K_{k-1} \vee tK_2) \leq k$ by \cite{hls}. 
Hence $\mu(I_{k-1} \vee tK_2) \leq \mu(K_{k-1} \vee tK_2) \leq k$.
So if $r=k$, Statement 2 of Corollary \ref{upper bound threshold} implies that $p_\G^{\D_r}=O(n^{-1/(2r-1)})$ and $p_\G^{\chi^\ell_r}=O(n^{-1/(2r-1)})$.
Moreover, it is known \cite{hls} that $\mu(tK_{r+1})=r$ for any positive integer $t$, so $p_\G^{\chi_r}=O(n^{-1/{r+1 \choose 2}})$ by Lemma \ref{prob general}. 
This completes the proof for the case for the graphs with Colin de Verdi\`{e}re parameter at most $k$.

Now we assume $\G$ is the class of planar graphs.
By the Four Color Theorem, every planar graph is 4-colorable, so $p_\G^{\chi_r}=\Theta(1)$ for $r \geq 4$.
Since every planar graph is 5-choosable \cite{t}, $p_\G^{\chi^\ell_r}=\Theta(1)$ for $r \geq 5$.
And Euler's formula implies that every planar graph is 5-degenerate, so $p_\G^{\D_r}=\Theta(1)$ for $r \geq 6$.
When $r \geq 4$, $\tau(K_{3,r})=3 \leq r$ and $K_{3,r}$ is not a subgraph of $K_2 \vee tK_{r-1} = K_{\tau(K_{3,r})-1} \vee tK_{r+2-\tau(K_{3,r})}$ for any positive integer $t$, so $p_\G^{\chi_r^\ell} \geq p_\G^{\D_r} \geq p_{\M(K_{3,r})}^{\D_r} =\Omega(n^{-1/((r-1)r-{r-1 \choose 2}})$ by Statement 2 of Theorem \ref{thm: main critical exponent 1}.
Note that $(r-1)r-{r-1 \choose 2} = (r-1)(r+2)/2$.
And the icosahedron is a 5-regular planar graph with 12 vertices and 30 edges.
Note that any union of disjoint copies of the icosahedron is still planar. 
So Statement 4 of Corollary \ref{upper bound threshold} implies that $p_\G^{\D_5}=O(n^{-1/30})$.
Similarly, the octahedron is a 4-regular planar graph with 12 edges.
So $p_\G^{\D_4}=O(n^{-1/12})$.
Moreover, there exists a planar non-4-choosable graph with $75$ vertices and $219$ edges \cite{g}, so $p_\G^{\chi^\ell_4}=O(n^{-1/219})$ by Lemma \ref{prob general}. 
And the case $r \leq 3$ follows from the case for the graphs with Colin de Verdi\`{e}re parameter at most $3$.
This completes the proof for the planar case.

Now we assume that $\G$ is the class of linkless embeddable graphs.
Every linkless embeddable graph is $K_6$-minor free \cite{rst}, so it is 5-colorable \cite{rst_color}.
So $p_\G^{\chi_5}=\Theta(1)$.
And $K_{4,4} \not \in \G$ \cite{s}. 
Note that $\tau(K_{4,4})=4 \leq 5$ and $K_{4,4}$ is not a subgraph of $K_3 \vee tK_3 = K_{\tau(K_{4,4})-1} \vee tK_{5+2-\tau(K_{4,4})}$ for any positive integer $t$.
So $p_\G^{\chi_5^\ell} \geq p_\G^{\D_5} \geq p_{\M(K_{4,4})}^{\D_5} =\Omega(n^{-1/12})$ by Statement 2 of Theorem \ref{thm: main critical exponent 1}.
And the icosahedron is a 5-regular planar graph with 12 vertices and 30 edges.
So the graph obtained by adding a new vertex adjacent to the disjoint union of any number of copies of icosahedron is linkless embeddable.
Note that such graphs are $F \wedge_t V(I_1)$ for integers $t$, where $F$ is in ${\mathcal F}(I_1,F_0,6)$ with type $30+12$ and $F_0$ is the icosahedron.
So Statement 4 of Corollary \ref{upper bound threshold} implies that $p_\G^{\D_5}=O(n^{-1/42})$.
Similarly, since there exists a planar non-4-choosable graph with $75$ vertices and $219$ edges, we can obtain a non-5-choosable $K_6$-minor free graph by first taking five disjoint copies of that graph and then adding a new vertex adjacent to all other vertices.
So there exists a non-5-choosable apex graph with $5 \cdot (219+75)=1470$ edges, so $p_\G^{\chi^\ell_5}=O(n^{-1/1470})$.
And the case $r \leq 4$ follows from the case for the graphs with Colin de Verdi\`{e}re parameter at most $4$.
This completes the proof of the linkless embeddable case.

Finally, let $\G$ be the class of outerplanar graphs.
It is well-known that every outerplanar graph is 2-degenerate and hence is 3-choosable and 3-colorable.
So $p_\G^{\D_r}=p_\G^{\chi_r}=p_\G^{\chi_r^\ell}=\Theta(1)$ for every $r \geq 3$.
The case $r=2$ follows from the $r=k=2$ case of the class of graphs with Colin de Verdi\`{e}re parameter $\mu \leq k$.
This completes the proof of the corollary.
\hfill
$\Box$

\section{Concluding remarks} \label{sec:concluding remarks}
In this paper, we initiate a systematic study of threshold probabilities for monotone properties in the random model $G(p)$, where $G$ belongs to a given proper minor-closed family. 
Minor-closed families are natural classes of sparse graphs, and results about threshold probability on sparse graph classes in the literature seem rare.

Theorems \ref{thm: main critical exponent 1} and \ref{thm:critical exponent lower bound} address the properties $\D_r$ (being $(r-1)$-degenerate) and $\chi_r^\ell$ (being $r$-choosable).
Those results immediately provide lower bound for the thresholds for the properties $\R_r$ (non-existence of $r$-regular subgraphs) and $\chi_r$ (being $r$-colorable) by Proposition \ref{relation three properties}.
We do not put any effort on the results for the properties $\R_r$ and $\chi_r$ in this paper; but for some minor-closed families $\M(H)$, the machinery developed in this paper easily provides matching upper bounds, as described in the following two theorems whose proofs are simple and left in the appendix.

\begin{theorem} \label{thm:critical exponent regular intro}
Let $r \geq 2$ be an integer and $H$ a graph. 
Then $p_{\M(H)}^{\R_r}$ is $\Theta(n^{-1/q_H})$, where $q_H$ is defined as follows. 
	\begin{enumerate}
		\item If $\tau(H) \geq r+1$, then $q_H=r$. 
		\item If $1 \leq \tau(H) \leq r$, $r$ is divisible by $r+2 - \tau(H)$ and $H$ is not a subgraph of $K_{\tau(H)-1} \vee tK_{r+2-\tau(H)}$ for any positive integer $t$, then $q_H = (r+2-\tau(H))r - {r+2-\tau(H) \choose 2}$.
		\item If $1 \leq \tau(H) \leq r$, $r$ is even, $H$ has minimum degree at least $r$ and $H$ is not a subgraph of $K_{r-1} \vee tK_2$ for any positive integer $t$, then $q_H=2r-1$.
	\end{enumerate}
Furthermore, if either $H = K_{r+1}$ and $r \leq 3$, or $H=K_{1,s}$ for some $s \leq r$ , then $p_{\M(H)}^{\R_r}=\Theta(1)$.
\end{theorem}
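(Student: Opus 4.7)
The plan is to prove the two directions of the bound separately. For the lower bound, Proposition~\ref{relation three properties} immediately gives $p_{\M(H)}^{\R_r} \geq p_{\M(H)}^{\D_r}$ because every $(r-1)$-degenerate graph contains no $r$-regular subgraph. Since the values of $q_H$ in cases 1--3 of Theorem~\ref{thm:critical exponent regular intro} agree with those in the matching cases of Theorem~\ref{thm: main critical exponent 1}, the lower bound $p_{\M(H)}^{\R_r} = \Omega(n^{-1/q_H})$ follows without extra work.

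For the upper bound, I would in each case exhibit a host graph $G_n\in\M(H)$ and a specific $r$-regular pattern $F$, then apply the second-moment method to the number of copies of $F$ appearing in $G_n(p)$. In case 1, take $G_n = K_r \vee I_{n-r}$; every minor of $G_n$ has a vertex cover of size at most $r$, so the hypothesis $\tau(H)\geq r+1$ ensures $G_n\in\M(H)$. The pattern $F = K_{r,r}$, with its two parts placed on $V(K_r)$ and on an arbitrary $r$-subset of $V(I_{n-r})$, gives $\Theta(n^r)$ placements each using $r^2$ edges, so the expected count is $\Theta(n^r p^{r^2})$, which diverges precisely when $p\gg n^{-1/r}$; the second moment is controlled because two placements sharing $j$ vertices of $V(I_{n-r})$ share exactly $jr$ edges. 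In cases 2 and 3, write $s := r+2-\tau(H)$ (so $s=2$ in case 3), take $G_n = K_{\tau(H)-1}\vee t K_s$ with $t = \lfloor(n-\tau(H)+1)/s\rfloor$, and use the pattern $F = I_{\tau(H)-1}\vee k K_s$ where $k := r/s$. The divisibility hypothesis ($s \mid r$, which in case 3 reads ``$r$ even'') guarantees that $k$ is a positive integer, and then every hub vertex of $F$ has degree $ks = r$ while every vertex in the $K_s$ copies has degree $(s-1)+(\tau(H)-1)=r$, so $F$ is $r$-regular. The host $G_n$ is $H$-minor free by the same arguments used in the proof of Theorem~\ref{thm: main critical exponent 1} under the non-subgraph hypothesis. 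Counting yields $\binom{t}{k}=\Theta(n^k)$ placements and $k\bigl(\binom{s}{2}+s(\tau(H)-1)\bigr) = k\, q_H$ edges per placement, so the expected number of copies of $F$ is $\Theta((np^{q_H})^k)$, diverging precisely when $p\gg n^{-1/q_H}$.

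For the boundary cases in the ``furthermore'' clause, if $H = K_{1,s}$ with $s\leq r$ then every $H$-minor-free graph has maximum degree at most $s-1<r$ and hence contains no $r$-regular subgraph; and if $H = K_{r+1}$ with $r\leq 3$ then $r\in\{2,3\}$ and every $H$-minor-free graph is a forest (when $r=2$) or series-parallel (when $r=3$), both of which are $(r-1)$-degenerate and hence admit no $r$-regular subgraph, giving threshold $\Theta(1)$ in each case. The main technical obstacle is the second-moment estimate in cases 2 and 3, since two placements of $F$ may overlap in both hub vertices and in entire shared copies of $K_s$; a careful case analysis on the intersection pattern is required to show that each overlap contributes a factor of the form $(np^{q_H})^{-j}$ to $\E[X^2]/(\E X)^2$, which vanishes once $p\gg n^{-1/q_H}$, allowing the second-moment method to go through in the same spirit as case 1.
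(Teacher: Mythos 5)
Your lower bound is exactly the paper's: apply Proposition~\ref{relation three properties} and read off $q_H$ from Theorem~\ref{thm: main critical exponent 1}. The upper bound constructions (host plus $r$\nobreakdash-regular target pattern) are also essentially the paper's, but you run the second-moment argument differently, and there is one small gap in the $H$\nobreakdash-minor-freeness check.

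The paper obtains the upper bound via Corollary~\ref{upper bound threshold}, which rests on Lemma~\ref{prob general}. There the random variable is the number of surviving copies of a single ``pedal'' $Q$ (e.g.\ $I_{r-w}\vee K_{w+1}$) among the $\ell_n$ edge-disjoint copies making up $Q\wedge_{\ell_n}Z$; because those copies are edge-disjoint, the indicators $X_i$ are independent, $\Var[X]\le\E[X]$, and Chebyshev is immediate. Once $X\ge k$, one assembles the $r$\nobreakdash-regular subgraph $Q\wedge_k Z=I_{r-w}\vee\frac{r}{w+1}K_{w+1}$. You instead run the second moment directly on the count of full copies of $F=I_{\tau(H)-1}\vee kK_s$ in $G_n(p)$, and therefore have to control the covariance coming from pairs of placements sharing $j$ of the $k$ $K_s$\nobreakdash-blocks. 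Your claim that each such overlap contributes a factor of $(np^{q_H})^{-j}$ is correct (the shared edge count is $jq_H$, and for fixed $j$ the proportion of such pairs is $\Theta(t^{-j})$), so your route closes, but it is more work than the paper's edge-disjoint decomposition, which eliminates the covariance entirely.

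Two smaller points. First, you take $G_n = K_{\tau(H)-1}\vee tK_s$, whereas the paper uses $I_{\tau(H)-1}\vee tK_s$; the paper's justification that $G_n\in\M(H)$ is precisely the observation ``every minor of $I_a\vee tK_b$ is a subgraph of $K_a\vee t'K_b$,'' which does not literally cover your clique-center host. The analogous statement for $K_a\vee tK_b$ is true (a single contraction across the join produces a universal vertex, so the center size does not grow), but you would need to say so; alternatively just use the edgeless center as the paper does, since the pattern $F$ does not use the center edges anyway. Second, your ``furthermore'' arguments are fine: $\M(K_{1,s})$ consists of graphs with maximum degree at most $s-1<r$, and $\M(K_3)$, $\M(K_4)$ are the forests and series-parallel graphs respectively, all $(r-1)$\nobreakdash-degenerate for the relevant $r$; the paper reaches the same conclusion through the degeneracy threshold.
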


\begin{theorem} \label{thm:critical exponent colorable intro}
Let $r \geq 2$ be an integer and let $H$ be a graph.
Then the following hold.
	\begin{enumerate}
		\item If $1 \leq \tau(H) \leq 2$ and $H$ is not a subgraph of $K_1 \vee t K_{r}$ for any positive integer $t$, then $p_{\M(H)}^{\chi_r}=\Theta(n^{-2/(r(r+1))})$. 
		\item If either $H = K_{r+1}$ and $r\leq 3$, or $H$ has at most one component on more than two vertices and every component of $H$ is an isolated vertex or a star of maximum degree at most $r$, then $p_{\M(H)}^{\chi_r}=\Theta(1)$.
	\end{enumerate}
\end{theorem}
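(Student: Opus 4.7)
The lower bound in Statement 1 is immediate from Proposition \ref{relation three properties} together with Theorem \ref{thm: main critical exponent 1}. Under the hypothesis, $\tau(H)$ must equal $2$, because any graph with $\tau(H)=1$ is a star plus isolated vertices and hence a subgraph of $K_1 \vee tK_r$ once $t$ is large enough. Case~2 of Theorem \ref{thm: main critical exponent 1} with $\tau(H)=2$ yields $q_H = r^2 - \binom{r}{2} = r(r+1)/2$, and hence $p_{\M(H)}^{\D_r}=\Theta(n^{-2/(r(r+1))})$, which bounds $p_{\M(H)}^{\chi_r}$ from below by Proposition \ref{relation three properties}.

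For the matching upper bound in Statement 1, my plan is to use the explicit construction $G_n := K_1 \vee \lfloor (n-1)/r \rfloor K_r$, padded with isolated vertices to reach exactly $n$ vertices. The key lemma I would establish is that every minor of $K_1 \vee tK_r$ is a subgraph of $K_1 \vee t'K_r$ for some $t' \le t$: in any collection of pairwise disjoint connected branch sets, at most one branch set contains the apex, while every other branch set (being connected in $K_1 \vee tK_r$ minus the apex, which is the disconnected $tK_r$) lies in a single copy $K_r^{(i)}$; the apex-containing branch set becomes adjacent to everything in the quotient, and within each $K_r^{(i)}$ the branch sets form a clique of size at most $r$. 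Padding by isolated vertices is harmless because it only contributes isolated vertices to minors. Combined with the hypothesis that $H$ is not a subgraph of any $K_1 \vee t'K_r$, this shows $G_n \in \M(H)$.

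Next, the graph $G_n$ contains $t=\Theta(n)$ edge-disjoint copies of $K_{r+1}$, namely the apex together with each $K_r^{(i)}$. For $q \gg n^{-2/(r(r+1))}$, each such copy survives in $G_n(q)$ with probability $q^{r(r+1)/2}$, so the expected number of surviving copies is $\Theta(n)\cdot q^{r(r+1)/2}\to\infty$, and edge-disjointness makes these survival events mutually independent. Chebyshev's inequality then yields that at least one copy of $K_{r+1}$ survives a.a.s., so $\chi(G_n(q))\ge r+1$ a.a.s., giving $p_{\M(H)}^{\chi_r}=O(n^{-2/(r(r+1))})$ and completing Statement 1.

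For Statement 2, the goal is to establish $\M(H) \subseteq \chi_r$, which forces $p_{\M(H)}^{\chi_r}=\Theta(1)$. When $H=K_{r+1}$ with $r\le 3$, this is Hadwiger's conjecture in its elementary range: $K_3$-minor-free graphs are forests and hence $2$-colorable, while $K_4$-minor-free graphs are $2$-degenerate (classical) and hence $3$-colorable. In the other alternative, where every component of $H$ is an isolated vertex or a star of maximum degree at most $r$ and at most one component has more than two vertices, a short structural argument shows that every $G\in \M(H)$ either has maximum degree at most $r-1$ (hence is $(r-1)$-degenerate, and so $r$-colorable) or is of bounded size (and $r$-colorable after trivial checks). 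The principal technical obstacle I anticipate is the minor-freeness lemma used in the construction, which, although intuitively clear, requires careful bookkeeping to handle arbitrary interleavings of contractions involving and not involving the apex; once this lemma is in hand, the random-graph computation and the structural argument for Statement 2 are routine.
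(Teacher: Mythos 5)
For Statement~1, your proof is correct and takes the same route as the paper: the lower bound is exactly Case~2 of Theorem~\ref{thm: main critical exponent 1} with $\tau(H)=2$ combined with Proposition~\ref{relation three properties}, and the upper bound uses the construction $K_1 \vee tK_r$ together with a second-moment argument, which the paper packages in Lemmas~\ref{prob general} and~\ref{claim:UB2} and Corollary~\ref{upper bound threshold}. Your observation that every minor of $K_1 \vee tK_r$ embeds into some $K_1 \vee t'K_r$ is the same fact the paper states without elaboration, and your edge-disjointness/independence remark is sound.

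For Statement~2 there is a genuine gap. You aim at the deterministic inclusion $\M(H)\subseteq\chi_r$, but this inclusion is false under the stated hypothesis: whenever $\lvert V(H)\rvert>r+1$ (for instance $H=K_2+rK_1$, which the hypothesis allows), the graph $K_{r+1}$ has fewer than $\lvert V(H)\rvert$ vertices and so lies in $\M(H)$, yet $\chi(K_{r+1})=r+1$. The structural claim you invoke---that every $G\in\M(H)$ has maximum degree at most $r-1$ or bounded size---also fails: $H=3K_2$ is admitted by the stated hypothesis (each $K_2=K_{1,1}$ is a star on two vertices, so no component has more than two vertices), but $K_{2,n}\in\M(3K_2)$ has $n+2$ vertices and maximum degree $n$. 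The paper instead derives $p_{\M(H)}^{\chi_r}=\Theta(1)$ from $p_{\M(H)}^{\D_r}=\Theta(1)$ (the ``Furthermore'' sentence of Theorem~\ref{thm: main critical exponent 1}) via Proposition~\ref{relation three properties}; that route requires the condition of Theorem~\ref{thm: main critical exponent 1}, namely that at most one component of $H$ has \emph{at least} two vertices (not ``more than two''). Indeed for $H=3K_2$ the degeneracy route breaks: with $K_{2,n}$ and $q(n)=n^{-1/3}$, the number of surviving common neighbors of the two high-degree vertices is binomial with mean $n^{1/3}$, so a $4$-cycle a.a.s.\ remains and $p_{\M(3K_2)}^{\D_2}\ne\Theta(1)$. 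You therefore need both to replace the deterministic inclusion by a genuinely probabilistic argument and to reconcile your hypothesis on $H$ with the one Theorem~\ref{thm: main critical exponent 1} actually supplies.
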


As we mentioned above, we do not try to strengthen results for $p_{\M(H)}^{\R_r}$ and $p_{\M(H)}^{\chi_r}$ in this paper.
We leave it for future research.

\begin{question}
For any integer $r \geq 2$ and graph $H$, what are $p_{\M(H)}^{\R_r}$ and $p_{\M(H)}^{\chi_r}$?
And more generally, what are $p_\G^{\R_r}$ and $p_{\G}^{\chi_r}$ for any given proper minor-closed family?
\end{question}

The threshold studied in this paper is also called the crude threshold. 
A function $p^*: \mathbb{N} \to [0,1]$ is an {\it (upper) sharp threshold} for a graph class $\G$ and a monotone property $\P$ if the following hold.
 \begin{enumerate}
	\item for every sequence $(G_i)_{i \in {\mathbb N}}$ of graphs with $G_i \in \G$ and $\lvert V(G_i) \rvert \to \infty$ and any $\epsilon >0$ , the random subgraphs $G_i((1-\epsilon)p(n_i))$ are in $\P$ a.a.s.\ where $n_i = |V(G_i)|$;
	\item there is some sequence $(G_i)_{i \in {\mathbb N}}$ of graphs with $G_i \in \G$ and $\lvert V(G_i) \rvert \to \infty$ such that for any $\epsilon > 0$, the random subgraphs $G_i((1+\epsilon)p(n_i))$ are not in $\P$ a.a.s.\ where $n_i = |V(G_i)|$. 
\end{enumerate}

Friedgut \cite{sharpthreshold} provides a necessary and sufficient condition to check whether there is a sharp threshold for a general class of random  models. However it is not an easy task to apply to our model.
The next natural question is:
\begin{question}
What are the sharp thresholds for properties $\D_r,\chi_r^\ell,\chi_r,\R_r$ for minor-closed families?
\end{question}

It is also interesting to study other global properties, where some natural algorithms are NP-hard even on some proper minor-closed families, such as the set of planar graphs.

\paragraph{Acknowledgement} The authors would like to thank Pasin Manurangsi for bringing up the question for $3$-colorability in planar graphs to the second author, thank Jacob Fox, Sivakanth Gopi, and Ilya Razenshteyn for helpful discussions, and thank the anonymous referee for careful reading and comments.

\appendix
\section{Appendix}

\subsection{Degeneracy function and extremal function}
\begin{proposition} \label{equiv_thr_den}
For every integer $r$ with $r \geq 2$ and every connected graph $H$, $p_{\M(H)}^{\D_r}=\Theta(1)$ if and only if $r-1 \geq d_H^*$. 
\end{proposition}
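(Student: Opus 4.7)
The plan is to prove the two directions separately. The essence is simply to translate the set-theoretic fact ``$\M(H) \subseteq \D_r$ iff $r-1 \geq d_H^*$'' into the language of thresholds.

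For the forward direction, assume $r-1 \geq d_H^*$. By definition of $d_H^* = \sup_n d_H(n)$, every $H$-minor-free graph on $n$ vertices is $d_H(n)$-degenerate and hence $(r-1)$-degenerate, so $\M(H) \subseteq \D_r$. Monotonicity of $\D_r$ then gives $G(p) \in \D_r$ with probability $1$ for every $G \in \M(H)$ and every $p$, so the constant function $p \equiv 1$ satisfies Condition~1 of Definition~\ref{def upper threshold} trivially while Condition~2 is vacuous (no $q: \mathbb{N}\to[0,1]$ has $q(n)/1 \to \infty$). Thus $p_{\M(H)}^{\D_r} = \Theta(1)$.

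For the converse I plan to prove the contrapositive: if $r \leq d_H^*$, then $p_{\M(H)}^{\D_r}$ tends to $0$ and hence cannot be $\Theta(1)$. The point of departure is that since $d_H^* \geq r$, there exist an integer $n_0$ and a concrete $n_0$-vertex $H$-minor-free graph $G_0$ containing a subgraph $R_0$ with $\delta(R_0) \geq r$; set $q := |E(R_0)|$. I will construct a bad sequence $(G_n)$ by taking $G_n$ to be a disjoint union of $\lfloor n/n_0\rfloor$ copies of $G_0$, filled out with isolated vertices to have exactly $n$ vertices. Each $G_n$ lies in $\M(H)$ precisely because $H$ is connected, so any $H$-minor must lie in a single component of $G_n$; this is the one place where the connectedness hypothesis is essential.

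I then plan to pick $p(n) = n^{-1/(2q)}$ (any $p(n) \to 0$ with $n \cdot p(n)^q \to \infty$ works), note that the $\lfloor n/n_0\rfloor$ copies of $R_0$ inside $G_n$ are pairwise edge-disjoint so their survival events in $G_n(p(n))$ are mutually independent with probability $p(n)^q$, and conclude via the trivial estimate
\[
\Pr(\text{no copy of } R_0 \text{ survives}) \leq \bigl(1 - p(n)^q\bigr)^{\lfloor n/n_0\rfloor} \leq \exp\!\bigl(-\Theta(n^{1/2})\bigr) \to 0.
\]
Since surviving a copy of $R_0$ gives $G_n(p(n))$ a subgraph of minimum degree $\geq r$, we get $G_n(p(n)) \notin \D_r$ a.a.s., contradicting $p_{\M(H)}^{\D_r} = \Theta(1)$.

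I do not expect any real obstacle. The only choice to make is the scale of $p(n)$, and the only subtlety is the appeal to connectedness of $H$ to keep the disjoint-union construction inside $\M(H)$; the probabilistic step is a one-line independence estimate using edge-disjointness of the copies of $R_0$, so no second-moment argument is required.
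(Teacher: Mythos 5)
Your proof is correct, and it is genuinely simpler and more direct than the paper's. For the forward direction both arguments coincide: if $r-1 \geq d_H^*$ then every graph in $\M(H)$ is already $(r-1)$-degenerate, so $G(p)\in\D_r$ with probability one and the constant function $1$ is an upper threshold. For the converse, the paper proceeds indirectly: it defines a critical probability $p(G)$ for each $G\in\M(H)$ as the supremum of $p$ for which $G(p)\in\D_r$ with probability at least $0.9$, takes $p(n)=\min_{|V(G)|=n} p(G)$, argues by a disjoint-union blow-up that the limit $p^*=\lim_n p(n)$ exists and is $0$ or $1$, and then derives a contradiction from $p^*=0$ and from $p^*<1$ under the $\Theta(1)$ hypothesis. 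Your contrapositive bypasses this machinery entirely: from $d_H^*\geq r$ you extract one $H$-minor-free graph $G_0$ with a subgraph $R_0$ of minimum degree $\geq r$ and $q=|E(R_0)|$ edges, place $\lfloor n/n_0\rfloor$ disjoint copies into $G_n$, and observe that at $p(n)=n^{-1/(2q)}\to 0$ the probability all copies of $R_0$ are destroyed is $(1-n^{-1/2})^{\lfloor n/n_0\rfloor}=e^{-\Theta(\sqrt{n})}\to 0$. This violates Condition~1 of Definition~\ref{def upper threshold} for any $\Theta(1)$ function, as desired. Both proofs rely on connectedness of $H$ to keep disjoint unions inside $\M(H)$ and both ultimately use edge-disjoint copies of a bad subgraph, but yours names a single sequence $(G_n)$ and a single scale $p(n)$ and avoids the second-moment-flavored bookkeeping around $p^*$. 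I see no gap; the one minor stylistic point is that you overload the letter $q$ (for $|E(R_0)|$ and, in Definition~\ref{def upper threshold}, the varying probability function), which you may want to rename to avoid confusion.
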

\begin{proof}
Since $d_H(n)$ is non-decreasing in $n$, we know $r-1 \geq d_H^*$ if and only if $r-1 \geq d_H(n)$ for every $n \in {\mathbb N}$.
Hence it suffices to prove that $\P_{\M(H)}^{\D_r}=\Theta(1)$ if and only if $r-1 \geq d_H(n)$ for every $n \in {\mathbb N}$.

If $r-1 \geq d_H(n)$ for every $n \in {\mathbb N}$, then every graph $G \in \M(H)$ on sufficiently many vertices is already $(r-1)$-degenerate and thus the threshold probability is $\Theta(1)$. 

Now we show that $\thredeg = \Theta(1)$ implies that $r-1 \geq d_H(n)$ for every $n \in {\mathbb N}$.  
For every graph $G \in \M(H)$, let $p(G)$ be the supremum of all $p$ such that  the random subgraph $G(p)$ is $(r-1)$-degenerate with probability at least $0.9$. 
Note that such $p(G)$ exists since degeneracy is a monotone property. 
For every $n \in {\mathbb N}$, let $p(n)$ be the minimum of $p(G)$ among all graphs $G \in \M(H)$ on $n$ vertices.
Note that there are only finite number of graphs on $n$ vertices.
Since adding isolated vertices to any $G \in \M(H)$ results in a $G'\in \M(H)$ on more vertices, and $p(G) = p(G')$, the function $p$ is non-increasing.
Hence $\lim_{n \to \infty} p(n)$ exists.

Let $p^* = \lim_{n \to \infty} p(n)$.  
We claim that $p^*=1$ or $p^*=0$.
Suppose to the contrary that $0< p^* < 1$.
Let $p'$ be any real number with $0< p' < p^*$. 
Let $G \in \M(H)$ be a graph such that $p(G) < 1$, and let $a$ be the probability that $G(p')$ is $(r-1)$-degenerate. 
Thus $0.9 \leq a$.
Since $p(G)<1$, $G$ is not $(r-1)$-degenerate, $a \leq 1 - {p'}^{e(G)}$. 
In particular, $0< a < 1$. 
For every $k \in {\mathbb N}$, let $G_k$ be a union of $k$ disjoint copies of $G$. 
Thus when $k \geq \lceil \log_a(1/2) \rceil$, the probability that at least one copy of $G_k(p')$ is not $(r-1)$-degenerate is $1-a^k \geq 1- a^{\lceil \log_a(1/2) \rceil} \geq 0.5>0.1$.
So $p(G_k) \leq p'$ for every $k \geq \lceil \log_a(1/2) \rceil$.
That is, $p(\lvert V(G_k) \rvert) \leq p'$ for every $k \geq \lceil \log_a(1/2) \rceil$.
Hence $(p(n_k): k \geq \lceil \log_a(1/2) \rceil)$ is a subsequence of $(p(n): n \in {\mathbb N})$, where $n_k = \lvert V(G_k) \rvert$, such that $p(n_k) \leq p'$ for every $k \geq \lceil \log_a(1/2) \rceil$.
Therefore, $p^* \leq p'$, a contradiction.

Suppose $p_{\M(H)}^{\D_r}=\Theta(1)$ and $p^*=0$.
Let $q(n)=\min\{p(n)+\frac{1}{n},1\}$ for every $n \in {\mathbb N}$.
Since $q(n)>p(n)$ for every $n \in {\mathbb N}$, there exist $G_1,G_2,...$ such that $\lvert V(G_n) \rvert = n$ and $\Pr(G_n(q) \in \D_r)<0.9$ for every $n \in {\mathbb N}$.
Hence $\lim_{n \rightarrow \infty} \frac{q(n)}{1} \leq p^*=0$, but $\lim_{n \rightarrow \infty}\Pr(G_n(q) \in \D_r) \neq 1$, contradicting $p_{\M(H)}^{\D_r}=\Theta(1)$.

Therefore, if $\thredeg = \Theta(1)$, then $p^* = 1$. 
Thus $p(n) = 1$ for all $n \in {\mathbb N}$ since $p(n)$ is non-increasing in $n$.
Suppose that there exists $G \in \M(H)$ such that $G$ is not $(r-1)$-degenerate.
Let $w=(0.2)^{1/n^2}$.
Then $\Pr(G(w) \in \D_r) = 1-\Pr(G(w) \not \in \D_r) \leq 1-\Pr(G(w)=G)=1-w^{\lvert E(G) \rvert} \leq 1-w^{n^2}<0.9$.
So $p(G) \leq w$ and hence $p(\lvert V(G) \rvert)<1$, a contradiction.

Therefore every graph in $\M(H)$ is $(r-1)$-degenerate, which is equivalent to $r-1 \geq d_H(n)$ for every $n \in {\mathbb N}$. 
\end{proof}

\begin{proposition} \label{2approx}
Let $H$ be a graph.
Then $f_H^* \leq d^*_H \leq 2f_H^*$.
\end{proposition}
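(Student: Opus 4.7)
The plan is to prove the two inequalities separately using only the basic relationship between degeneracy, edge count, and average degree, together with the fact that the class $\M(H)$ is closed under taking subgraphs.

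For the lower bound $f_H^* \leq d_H^*$: The key observation is that every $d$-degenerate graph on $n$ vertices has at most $dn$ edges (order the vertices so each has back-degree at most $d$ and sum). Applying this with $d = d_H(n)$ to any $H$-minor free graph on $n$ vertices yields $f_H(n) \leq d_H(n) \cdot n$, hence $f_H(n)/n \leq d_H(n) \leq d_H^*$. Taking the supremum over $n$ gives $f_H^* \leq d_H^*$.

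For the upper bound $d_H^* \leq 2f_H^*$: I would argue that every $H$-minor free graph $G$ is $\lfloor 2 f_H^* \rfloor$-degenerate. Since $\M(H)$ is closed under taking subgraphs, for every nonempty subgraph $G'$ of $G$ we have $\lvert E(G') \rvert \leq f_H(\lvert V(G') \rvert) \leq f_H^* \cdot \lvert V(G') \rvert$, so the average degree of $G'$ is at most $2f_H^*$, and hence $G'$ contains a vertex of degree at most $\lfloor 2f_H^* \rfloor$. By definition of degeneracy, $G$ is $\lfloor 2 f_H^* \rfloor$-degenerate, which implies $d_H(n) \leq \lfloor 2 f_H^* \rfloor \leq 2 f_H^*$ for every $n$, and so $d_H^* \leq 2 f_H^*$.

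There is no real obstacle here; both directions are one-line consequences of the standard edge/average-degree inequalities for degenerate graphs, combined with the subgraph-closedness of $\M(H)$. The only minor care needed is to keep track that $d_H(n)$ is an integer while $f_H^*$ may not be, which is handled by the floor in the final estimate.
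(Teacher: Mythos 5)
Your proof is correct and follows essentially the same route as the paper: for the lower bound you use the fact that a $d_H(n)$-degenerate $n$-vertex graph has at most $d_H(n)\cdot n$ edges, and for the upper bound you note that every subgraph of an $H$-minor-free graph is again $H$-minor-free and thus has average degree at most $2f_H^*$, giving a low-degree vertex. Your explicit quantification over subgraphs $G'$ and the floor $\lfloor 2f_H^*\rfloor$ are slightly more careful than the paper's phrasing, but the argument is the same.
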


\begin{proof}
By the definition of $d_H$, every $H$-minor free graph $G$ is $d_H(\lvert V(G) \rvert)$-degenerate, so $G$ contains a vertex of degree at most $d_H(\lvert V(G) \rvert)$.
Hence every $H$-minor free graph on $n$ vertices contains at most $\sum_{i=1}^nd_H(i)$ edges by induction.
Since $d_H$ is non-decreasing, every $H$-minor free graph on $n$ vertices contains at most $\sum_{i=1}^nd_H(i) \leq d_H(n)n$ edges.
That is, $f_H(n) \leq d_H(n)n$ for every $n \in {\mathbb N}$.
Hence $f_H^*=\sup_{n \in {\mathbb N}} \frac{f_H(n)}{n} \leq \sup_{n \in {\mathbb N}}d_H(n) = d^*$.

By the definition of $f_H^*$, $\lvert E(G) \rvert/ \lvert V(H) \rvert \leq f_H^*$ for every $H$-minor free graph $G$.
So every $H$-minor free graph $G$ contains a vertex of degree at most $2\lvert E(G) \rvert/\lvert V(G) \rvert \leq 2f_H^*$.
Hence every $H$-minor free graph is $2f_H^*$-degenerate.
That is, $d_H(n) \leq 2f_H^*$ for every $n \in {\mathbb N}$.
Therefore, $d^*=\sup_{n \in {\mathbb N}}d_H(n) \leq 2f_H^*$.
\end{proof}

\subsection{Proof of Lemma \ref{nonchoosable}} \label{subsec:appendix_pf_color}

\begin{proof}[{\bf Proof of Lemma \ref{nonchoosable}:}]
Denote the vertices in $V(I_{r-w})$ by $v_1,v_2,...,v_{r-w}$.
For each $i$ with $1 \leq i \leq r-w$, define a list of $r$ colors $L_{v_i}=\{ri+j: 0 \leq j \leq r-1\}$. Thus $L_{v_i} \cap L_{v_j} = \emptyset$ for $1 \leq i < j \leq r-w$.
And for each vertex $v$ in $V(I_{r-w} \vee r^{r-w} K_{w+1})-\{v_1,v_2,...,v_{r-w}\}$, we define $L_v$ to be a set of size $r$ that is a union of $\{-1,-2,...,-w\}$ and a set $S_v$ with $|S_v \cap L_{v_i}| = 1$ for every $1 \leq i \leq r-w$, such that for every distinct vertices $x,y \in V(I_{r-w} \vee r^{r-w} K_{w+1})-\{v_1,v_2,...,v_{r-w}\}$, $L_x=L_y$ if and only if $x,y$ are in the same component of $(I_{r-w} \vee r^{r-w} K_{w+1})-\{v_1,v_2,...,v_{r-w}\}$. 
This is possible since there are $r^{r-w}$ components and there are $r^{r-w}$ ways to pick precisely one element from each size-$r$ list $L_{v_i}$ for $1 \leq i \leq r-w$.

Suppose to the contrary that $I_{r-w} \vee r^{r-w}K_{w+1}$ is $r$-choosable.
Then there exists a function $f$ such that $f(v) \in L_v$ for every $v \in I_{r-w} \vee r^{r-w}K_{w+1}$, and $f(x) \neq f(y)$ for every adjacent vertices $x,y$.
By construction, there exists a component $C$ of $(I_{r-w} \vee r^{r-w}K_{w+1})-\{v_i: 1 \leq i \leq r-w\}$ such that $L_v-\{f(v_i): 1 \leq i \leq r-w\} = \{-1,-2,...,-w\}$ for every $v \in V(C)$.
Since $\lvert V(C) \rvert = w+1$ and $L_v-\{f(v_i): 1 \leq i \leq r-w\} = \{-1,-2,...,-w\}$ for every $v \in V(C)$, there exist two distinct vertices $x,y$ of $C$ such that $f(x)=f(y)$.
Since $C$ is isomorphic to $K_{w+1}$, $x$ is adjacent to $y$, a contradiction.
Therefore, $I_{r-w} \vee r^{r-w}K_{w+1}$ is not $r$-choosable.
\end{proof}

\subsection {Proof of Lemmas \ref{lemma:L_tminor} and \ref{lemma:oneisnotminor}} \label{subsec:appendix_pf_minor}

We need the following auxiliary lemma.

\begin{lemma} \label{lemma:Veeminor}
Let $r$ be a positive integer with $r \geq 2$.
Let $H$ be a graph of minimum degree at least $r$ such that $H$ is a subgraph of $K_{r-1} \vee tK_2$ for some positive integer $t$.
Let $t^*$ be the minimum such that $H$ is a subgraph of $K_{r-1} \vee t^*K_2$.
Then either $H$ is not a minor of $K_{r-2} \vee tK_3$ for any positive integer $t$, or $2t^*=3q-1$ for some positive integer $q$. 
\end{lemma}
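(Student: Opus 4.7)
The plan is to pin down the structure of $H$ from the minimality of $t^*$, and then analyze any hypothetical minor model of $H$ in $G := K_{r-2} \vee \ell K_3$ via a case distinction on branch-set types. Since every non-apex vertex of $K_{r-1} \vee t^* K_2$ has degree exactly $r$ there, any such vertex lying in $V(H)$ must (using $\delta(H) \geq r$) retain all its incident edges in $H$, forcing both its $K_2$-partner and all $r-1$ apex vertices to lie in $V(H)$; together with the minimality of $t^*$ (which prevents any $K_2$ from being unused), this implies $V(H)$ equals the full vertex-set of $K_{r-1} \vee t^* K_2$, every apex $H$-vertex is adjacent in $H$ to all $2t^*$ non-apex $H$-vertices, and each apex $H$-vertex further has at least $\max(0, r-2t^*)$ apex $H$-neighbors.

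Now assume $H$ is a minor of $G := K_{r-2} \vee \ell K_3$ with branch sets $\{B_v\}_{v \in V(H)}$. Deleting the $r-2$ apex hosts from $G$ leaves $\ell$ disjoint triangles, so each branch set is either \emph{Type I} (contains an apex host) or \emph{Type II} (is contained in a single triangle). Let $a_I, a_{II}, n_I, n_{II}$ count Type I/II apex/non-apex branches. Each Type I branch consumes at least one apex host, so $a_I + n_I \leq r-2$; combined with $a_I + a_{II} = r-1$ this gives $a_{II} \geq 1 + n_I \geq 1$. The essential observation is that for a Type II apex $a$ in a triangle $T_a$, $B_a$ is host-adjacent only to the apex hosts and the at most $2$ vertices of $T_a \setminus B_a$; hence $B_a$ touches at most $n_I + \tau(T_a)$ non-apex $H$-branches and at most $a_I + A(T_a) - 1$ apex $H$-branches, where $\tau(T)$ and $A(T)$ denote the numbers of non-apex and apex Type II branches in $T$, respectively.

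The proof then splits into two cases. In Case A ($n_I < 2t^*$), the bound $\tau(T_a) \geq 2t^* - n_I > 0$ in every triangle containing a Type II apex, summed against $\sum_T \tau(T) = n_{II} = 2t^* - n_I$, forces all Type II apex branches into one triangle $T^*$. The capacity $a_{II} + \tau(T^*) \leq 3$ combined with $a_{II} \geq 1 + n_I$ then yields $1 + 2t^* \leq 3$, so $t^* = 1$ and $2t^* = 3 \cdot 1 - 1$. In Case B ($n_I = 2t^*$), the apex-adjacency requirement $a_I + A(T_a) - 1 \geq r - 2t^*$ together with $a_I \leq r - 2 - 2t^*$ pins both $a_I = r - 2 - 2t^*$ and $A(T_a) = 3$ uniquely, forcing every triangle containing a Type II apex to consist of three apex-vertex singletons; thus $a_{II}$ is a multiple of $3$, and substituting $a_{II} = 1 + 2t^*$ gives $2t^* = 3q - 1$ with $q := a_{II}/3 \geq 1$.

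The main obstacle will be establishing the tight adjacency count in Case B. This is precisely where the min-degree hypothesis $\delta(H) \geq r$ enters non-trivially: it yields an apex-apex degree lower bound of $r - 2t^*$ for each apex $H$-vertex, which is what forces $a_I$ to be exactly $r - 2 - 2t^*$ and each Type II apex triangle to be full (i.e., $A(T_a) = 3$); without this rigidity, the divisibility-by-$3$ conclusion for $a_{II}$, and therefore the relation $2t^* = 3q - 1$, would not follow.
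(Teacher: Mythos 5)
Your proof is correct and takes essentially the same route as the paper: you analyze a hypothetical minor model of $H$ in $K_{r-2}\vee\ell K_3$ by partitioning branch sets according to whether they meet the $r-2$ apex hosts (your Type I/Type II is the paper's intersects-$Y$/disjoint-from-$Y$ dichotomy), use $\delta(H)\geq r$ together with the adjacency budget of a branch set disjoint from the apex hosts to force triangles of apex singletons and to pin $a_I=r-2-2t^*$, and then land on the same counting identity $3q+(r-2-2t^*)=r-1$. Your Case A (where some non-apex branch avoids the apex hosts, i.e.\ $n_I<2t^*$) corresponds precisely to Statement~4 of the paper's Claim, which shows that this situation already forces $t^*=1$.
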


\begin{proof}
We may assume that there exists an $H$-minor $\alpha$ in $K_{r-2} \vee tK_3$ for some positive integer $t$, for otherwise we are done.
Let $Y$ be the vertex-set $V(K_{r-2})$ in $K_{r-2} \vee tK_3$.

Since $\delta(H) \geq r$ and $H$ is a subgraph of $K_{r-1} \vee t^*K_2$, $H=(K_{r-1} \vee t^*K_2)-S$, where $S$ is a set of edges of $K_{r-1} \vee t^*K_2$ in $E(K_{r-1})$.
Hence $I_{r-1} \vee t^*K_2 \subseteq H \subseteq K_{r-1} \vee t^*K_2$.
We call each vertex of $H$ in $V(K_{r-1})$ an {\it inner vertex}, and call each vertex of $H$ in $V(t^*K_2)$ an {\it outer vertex}.

\begin{claim} \label{claim:Veeminor}
Let $A_1$ be a branch set of $\alpha$ disjoint from $Y$.
Let $X$ be the vertex-set of the component of $(K_{r-2} \vee tK_3)-Y$ intersecting $A_1$.
Then the following hold.
	\begin{enumerate}
		\item $A_1$ consists of one vertex.
		\item $X$ is a union of three branch sets of $\alpha$.
		\item Every vertex in $Y$ belongs to a branch set, and different vertices of $Y$ belong to different branch sets of $\alpha$.
		\item either $A_1$ is a branch set corresponding to an inner vertex, or $t^*=1$.
	\end{enumerate}
\end{claim}

\noindent{\bf Proof of Claim \ref{claim:Veeminor}}
Since $\delta(H) \geq r$, $A_1$ is adjacent in $K_{r-2} \vee tK_3$ to at least $r$ other branch sets of $\alpha$.
Since $A_1$ is disjoint from $Y$, $|A_1|=1$.
Hence every vertex in $Y \cup (X-A_1)$ belongs to a branch set, and different vertices in $Y \cup (X-A_1)$ belong to different branch sets.
So Statements 1-3 hold.

Assume that $A_1$ is a branch set corresponding to an outer vertex.
Since every outer vertex is adjacent to all inner vertices, each branch set corresponding to an inner vertex either intersects $Y$ or is contained in $X$.
Since there are $r-1$ inner vertices and $|Y|=r-2$, there exists an inner vertex whose branch set is contained in $X$, so every branch set corresponding to an outer vertex intersects $Y \cup X$.
Hence there are at most $|X \cup Y|-2t^* = r+1-2t^*$ branch sets corresponding to inner vertices adjacent to $A_1$.
So $r+1-2t^* \geq r-1$.
That is, $t^*=1$.
So Statement 4 holds.
$\Box$

Since $|Y|=r-2$ and $|V(H)|=r-1+2t^*>r-2$, there exists a vertex $v$ of $H$ such that the branch vertex corresponding to $v$ in $\alpha$ is disjoint from $Y$.
Hence there exist a positive integer $q$ and components $C_1,C_2,...,C_q$ of $(K_{r-2} \vee tK_3)-Y$ such that those $C_i$ are the components of $(K_{r-2} \vee tK_3)-Y$ containing some branch sets disjoint from $Y$.
We may assume that $t^* \neq 1$, for otherwise $2t^*=3-1$ and we are done.
So by Claim \ref{claim:Veeminor}, for each $i \in [q]$, $V(C_i)$ is the union of three branch sets of $\alpha$ corresponding to inner vertices.
So the number of inner vertices whose branch sets are disjoint from $Y$ is $3q$.

Since each outer vertex is adjacent to all inner vertices, each branch set corresponding to an outer vertex intersects $Y$ and hence contains exactly one vertex in $Y$ (by Claim \ref{claim:Veeminor}).
Hence by Claim \ref{claim:Veeminor}, there are exactly $|Y|-2t^* = r-2-2t^*$ branch sets corresponding to inner vertices intersecting $Y$.
Therefore, the number of inner vertices is $3q+r-2-2t^*$.
In addition, the number of inner vertices is $|V(I_{r-1})|=r-1$.
Hence $2t^*=3q-1$.
This proves the lemma.
\end{proof}

\bigskip

\begin{proof}[{\bf Proof of Lemma \ref{lemma:L_tminor}:}]
Let us recall the definition of $L_t$.
Let $Y$ be the stable set of size $r-1$ in $I_{r-1} \vee K_3$ corresponding to $V(I_{r-1})$, and let $X=V(I_{r-1} \vee K_3)-Y$.
Let $L$ be a connected graph obtained from $I_{r-1} \vee K_3$ by deleting the edges of a matching of size three between $X$ and $Y$.
Denote $Y=\{y_1,y_2,...,y_{r-1}\}$.
For every positive integer $t$, $L_t$ is the graph obtained from a union of disjoint $t$ copies of $L$ by for each $i$ with $1 \leq i \leq r-1$, identifying the $y_i$ in each copy of $L$ into a new vertex $y_i^*$.

We may assume that there exists an $H$-minor $\alpha$ in $L_t$, for otherwise we are done. 
Since $\delta(H) \geq r$ and $H$ is a subgraph of $K_{r-1} \vee t^*K_2$, $I_{r-1} \vee t^*K_2 \subseteq H \subseteq K_{r-1} \vee t^*K_2$.
We call each vertex of $H$ in $V(K_{r-1})$ an {\it inner vertex}, and call each vertex of $H$ in $V(t^*K_2)$ an {\it outer vertex}.

\begin{claim} \label{claim:L_tminor}
Let $A_1$ be a branch set of $\alpha$ disjoint from $Y$.
Let $Z$ be the vertex-set of the component of $L_t-Y$ intersecting $A_1$.
Then the following hold.
	\begin{itemize}
		\item $A_1$ consists of one vertex.
		\item $Z$ is a union of three branch sets of $\alpha$.
		\item Every vertex in $Y$ belongs to a branch set, and different vertices of $Y$ belong to different branch sets of $\alpha$.
		\item $A_1$ is a branch set corresponding to an inner vertex.
	\end{itemize}
\end{claim}

\noindent{\bf Proof of Claim \ref{claim:L_tminor}:}
Since $\delta(H) \geq r$, $A_1$ is adjacent in $L_t$ to at least $r$ other branch sets of $\alpha$.
So $1 \leq |A_1| \leq 2$.

Suppose $|A_1|=2$.
Then $|Y \cup (Z-A_1)|=r$.
So each vertex in $Y \cup (Z-A_1)$ is contained in a branch set of $\alpha$, and different vertices in $Y \cup (Z-A_1)$ are contained in different branch sets.
So some branch set of $\alpha$ consists of the single vertex $u$ in $Z-A_1$.
Since $u$ is nonadjacent in $L_t$ to some vertex in $Y$, the branch set consisting of $u$ is adjacent to at most $(|Y|-1)+1 =r-1$ branch sets of $\alpha$, contradicting $\delta(H) \geq r$.

So $|A_1|=1$ and Statement 1 holds.
Let $x_1$ be the vertex in $A_1$.
By symmetry, we may assume that $y_1$ is the vertex in $Y$ nonadjacent to $x_1$ in $L_t$.
Since $A_1 \cap Y = \emptyset$ and $\delta(H) \geq r$, each vertex in $(Y-\{y_1\}) \cup (Z-A_1)$ is contained in a branch set of $\alpha$, and different vertices in $(Y-\{y_1\}) \cup (Z-A_1)$ are contained in different branch sets of $\alpha$.
This implies that there exist two different branch sets $A_2,A_3$ of $\alpha$ other than $A_1$ such that $A_2 \cap Z \neq \emptyset \neq A_3 \cap Z$, and one of $A_2,A_3$ is disjoint from $Y$.
By symmetry, we may assume that $A_2$ is disjoint from $Y$.
So $|A_2|=1$.
Let $x_2$ be the vertex in $A_2$.
By symmetry, we may assume that $y_2$ is the vertex in $Y$ nonadjacent to $x_2$ in $L_t$.
Since $\delta(H) \geq r$, each vertex in $(Y-\{y_2\}) \cup (Z-A_2)$ is contained in a branch set of $\alpha$, and different vertices in $(Y-\{y_2\}) \cup (Z-A_2)$ are contained in different branch sets of $\alpha$.
This implies that $y_1 \not \in A_3$.
So $A_3$ consists of one vertex, say $x_3$, in $Z$.
Hence $Z$ is a union of three branch sets $A_1,A_2,A_3$ of $\alpha$, where each of $A_i$ consists of one vertex.
So Statement 2 holds.

By symmetry, let $y_3$ be the vertex in $Y$ nonadjacent to $x_3$ in $L_t$.
Since $\delta(H) \geq r$, each vertex in $(Y-\{y_3\}) \cup (Z-A_3)$ is contained in a branch set of $\alpha$, and different vertices in $(Y-\{y_3\}) \cup (Z-A_3)$ are contained in different branch sets of $\alpha$.
So $y_1$ and $y_2$ are contained in different branch sets.
Hence each vertex of $Y$ is contained in a branch set of $\alpha$ other than $A_1,A_2,A_3$, and different vertices of $Y$ are contained in different branch sets of $\alpha$.
This proves Statement 3.

Suppose that $A_1$ is the branch set of $\alpha$ corresponding to an outer vertex $v_1$ of $H$.
Let $v_1'$ be the outer vertex of $H$ adjacent to $v_1$ in $H$.
Since the neighbors of $v_1$ are $v_1'$ and the $r-1$ inner vertices, $y_1$ is contained in the branch set of $\alpha$ corresponding to an outer vertex other than $v_1'$.
Suppose some of $A_2,A_3$, say $A_2$, is the branch set of $\alpha$ corresponding to an outer vertex $v_2$ of $H$.
Then $y_2$ is contained in the branch set of $\alpha$ corresponding to an outer vertex.
So there are at most $(|Y|-2) + (|Z|-2) \leq r-2$ branch sets corresponding to an inner vertex intersecting $(Y-\{y_1\}) \cup Z$.
Since there are $r-1$ inner vertices, $A_1$ is nonadjacent to some branch vertex corresponding to an inner vertex, a contradiction.
So each of $A_2,A_3$ is the branch set corresponding to an inner vertex.
Hence every branch set corresponding to an outer vertex other than $v_1$ intersects $Y$.
So there are at most $|Y|-(2t^*-1) \leq r-2t^*$ branch sets corresponding to an inner vertex intersecting $Y$.
Since $A_1$ is adjacent to $r-1$ branch sets corresponding to inner vertices, $r-2t^*+2 = r-2t^*+(|Z|-1) \geq r-1$, we know $t^*=1$.
So $v_1$ and $v_1'$ are the only outer vertices.
But $y_1$ is contained in the branch set of $\alpha$ corresponding to an outer vertex other than $v_1'$, a contradiction.
Statement 4 is thus proved.
$\Box$

Since $|Y|=r-1$ and $|V(H)|=r-1+2t^*>r-1$, there exists a vertex $v$ of $H$ such that the branch set corresponding to $v$ in $\alpha$ is disjoint from $Y$.
Hence there exist a positive integer $q$ and components $C_1,C_2,...,C_q$ of $L_t-Y$ such that those $C_i$ are the components of $L_t-Y$ containing some branch sets disjoint from $Y$.
By Claim \ref{claim:L_tminor}, for each $i \in [q]$, $V(C_i)$ is the union of three branch sets of $\alpha$ corresponding to inner vertices.
So the number of inner vertices whose branch sets are disjoint from $Y$ is $3q$.

Since each outer vertex is adjacent to all inner vertices, each branch set corresponding to an outer vertex intersects $Y$ and hence contains exactly one vertex in $Y$ (by Claim \ref{claim:L_tminor}).
Hence by Claim \ref{claim:L_tminor}, there are exactly $|Y|-2t^* = r-1-2t^*$ branch sets corresponding to an inner vertex intersecting $Y$.

Therefore, the number of inner vertices is $3q+r-1-2t^*$.
In addition, the number of inner vertices is $|V(I_{r-1})|=r-1$.
Hence $2t^*=3q$.
This proves the lemma.
\end{proof}

\bigskip

\begin{proof}[{\bf Proof of Lemma \ref{lemma:oneisnotminor}:}] 
When $r \geq 4$, Statements 1 or 2 hold by Lemmas \ref{lemma:Veeminor} and \ref{lemma:L_tminor}.
So we may assume that $r \in \{2,3\}$.
We may assume that $H$ is a minor of $K_{r-2} \vee tK_3$ for some positive integer $t$, for otherwise we are done.
Note that for any positive integer $t$, every minor of $K_{r-2} \vee tK_3$ is a subgraph of $K_{r-2} \vee tK_3$.
So $H$ is a subgraph of $K_{r-2} \vee tK_3$ for some positive integer $t$.

When $r=2$, $H$ is a subgraph of $K_{r-1} \vee tK_2 = K_1 \vee tK_2$ and a subgraph of $tK_3$ for some positive integer $t$, so $H=K_3=K_{r+1}$ since $\delta(H) \geq 2$.
So we may assume $r=3$.
Hence $H$ is a subgraph of $K_2 \vee tK_2$ and a subgraph of $K_1 \vee tK_3$ for some positive integer $t$.
Since $\delta(H) \geq 3$ and $H$ is a subgraph of $K_2 \vee tK_2$ for some positive integer $t$, there exists a positive integer $t^*$ such that $H=K_2 \vee t^*K_2$ or $H=I_2 \vee t^*K_2$.
In particular, $H$ is 2-connected.
Since $H$ is a subgraph of $K_1 \vee tK_3$ for some positive integer $t$ and $\delta(H) \geq 3$, either $H=K_4$ or $H$ has a cut-vertex.
So $H=K_4$.
This proves the lemma.
\end{proof}

\subsection{Results on being $r$-colorable, having no $r$-regular subgraphs}\label{subsubsec:otherproperty}

\begin{proof}[{\bf Proof of Theorem \ref{thm:critical exponent regular intro}:}]
Statement 1 holds by Statement 1 of Corollary \ref{upper bound threshold}, Lemma \ref{Krsminorsubgraph}, Proposition \ref{relation three properties} and Statement 1 in Theorem \ref{thm: main critical exponent 1}.
Now we can assume $1 \leq \tau(H) \leq r$. 
So there exists an integer $w$ with $1 \leq w \leq r$ such that $\tau(H)=r-w+1$.

We first prove Statement 2.
So $r$ is divisible by $w+1$ and $H$ is not a subgraph of $K_{r-w} \vee tK_{w+1}$ for any positive integer $t$.
Since every minor of $I_{r-w} \vee tK_{w+1}$ is a subgraph of $K_{r-w} \vee tK_{w+1}$, $\{I_{r-w} \vee sK_{w+1}: s \geq s_0\} \subseteq \M(H)$ for some sufficiently large $s_0$.
Hence Statement 2 of this theorem follows from Statement 2 of Corollary \ref{upper bound threshold}, Statement 2 of Theorem \ref{thm: main critical exponent 1} and Proposition \ref{relation three properties}.

Now we prove Statement 3.
Note that for any positive integer $t$, every minor of $I_{r-1} \vee t K_2$ is a subgraph of $K_{r-1}\vee tK_2$.
Hence $\{I_{r-1} \vee sK_2: s \in {\mathbb N}\} \subseteq \M(H)$.
And $K_{r+1}=K_{r-1} \vee K_2$, so $H \neq K_{r+1}$.
Hence Statement 3 of this theorem follows from Statement 2(c) of Corollary \ref{upper bound threshold} by taking $w=1$, Statement 3 of Theorem \ref{thm: main critical exponent 1} and Proposition \ref{relation three properties}.

If either $H=K_{r+1}$ and $r \leq 3$, or $H=K_{1,s}$ for some $s \leq r$, then every graph in $\M(H)$ is $(r-1)$-degenerate and hence $p_{\M(H)}^{\R_r}=p_{\M(H)}^{\D_r}=\Theta(1)$. 
\end{proof}

\bigskip

\begin{proof}[{\bf Proof of Theorem \ref{thm:critical exponent colorable intro}:}]
We first prove Statement 1.
If $\tau(H)=1$, then $H$ is a disjoint union of a star and isolated vertices, so $H$ is a subgraph of $K_1 \vee tK_r$ for some positive integer $t$, a contradiction.
So $\tau(H)=2$.
Hence Statement 2 in Theorem \ref{thm: main critical exponent 1} and Proposition \ref{relation three properties} implies that $p_{\M(H)}^{\chi_r} = \Omega(n^{-2/(r(r+1))})$.
Since every minor of $K_1 \vee tK_r$ is a subgraph of $K_1 \vee tK_r$, we know $\{I_1 \vee sK_r: s \in {\mathbb N}\} \subseteq \M(H)$.
By Statement 2(b) of Corollary \ref{upper bound threshold} by taking $w=r-1$, we know $p_{\M(H)}^{\chi_r}=O(n^{-2/(r(r+1))})$.
This proves Statement 1.

Statement 2 follows from the last sentence of Theorem \ref{thm: main critical exponent 1} and Proposition \ref{relation three properties}. 
\end{proof}


\begin{thebibliography}{99}
{\footnotesize
\bibitem{ABPW}
O. Angel, S. Bubeck, Y. Peres, and F. Wei, 
Local max-cut in smoothed polynomial time. {\it STOC'17 proceedings of the 49th Annual ACM SIGACT Symposium on Theory of Computing, 429--437, ACM, New York}, (2017).

\bibitem{Alon-percolation}
N. Alon, I. Benjamini and A. Stacey, 
Percolation on finite graphs and isoperimetric inequalities,
{\it Ann. Probab.} {\bf 32} (2004), 1727--1745.

\bibitem{BRS}
H. Bennett, D. Reichman and I. Shinkar, On percolation and $\mathcal{NP}$-hardness, {\it Random Structures Algorithms} {\bf 54} (2019), 228 --257.

\bibitem{bms}
T. B\"{o}hme, B. Mohar and M. Stiebitz, Dirac's map-color theorem for choosability, {\it J. Graph Theory} {\bf 32} (1999), 327--339.

\bibitem{B-RGbook}
B. Bollob{\'a}s, {\it Random Graphs}, Cambridge University Press, Cambridge, 2011. ISBN: 9780521797221.

\bibitem{BT-existenceThreshold}
B. Bollob{\'a}s and A. Thomason, Threshold functions, {\it Combinatorica} {\bf 7} (1986), 35--38. 

\bibitem{CCHSS-phasetransition-cube}
C. Borgs, J. Chayes, R. van der Hofstad, G. Slade and J. Spencer, 
Random subgraphs of finite graphs: III. The phase transition for the n-cube, 
{\it Combinatorica} {\bf 26} (2006), 395--410.  

\bibitem{c}
Y. Colin de Verdi\`{e}re, Sur un nouvel invariant des graphes et un crit\`{e}re de planarit\'{e}, {\it J. Combin. Theory Ser. B} {\bf 50} (1990), 11--21.

\bibitem{DSS-CSP}
J. Ding, A. Sly and N. Sun, Proof of the satisfiability conjecture for large $k$, {\it Proceedings of the Forty-Seventh Annual ACM Symposium on Theory of Computing (STOC)}, (2015), 59--68. 

\bibitem{Dirac}
G. A. Dirac, Homomorphism theorems for graphs, \emph{Math. Ann.} {\bf 153} (1964), 69--80.

\bibitem{ErdosRenyi}
P. Erd\H{o}s and A. R\'enyi, On the evolution of random graphs, {\it Bull. Inst. Internat. Statist.} {\bf 38} (1961), 343--347.

\bibitem{ER}
M. Etscheid and H. R\"{o}glin, Smoothed analysis of local search for the maximum-cut problem, {\it ACM Trans. Algorithms} {\bf 13} (2017), Article 25. 

\bibitem{FW-count-clique}
J. Fox and F. Wei, On the number of cliques in graphs with a forbidden minor, {\it J. Combin. Theory Ser. B} {\bf 126} (2017), 175--197.

\bibitem{sharpthreshold}
E. Friedgut,  Sharp thresholds of graph properties, and the $k$-sat problem. 
With an appendix by Jean Bourgain, {\it J. Amer. Math. Soc.} {\bf 12} (1999),  1017--1054.

\bibitem{FKalai-existenceThreshold}
E. Friedgut, G. Kalai,  Every monotone graph property has a sharp threshold, {\it Proc. Amer. Math. Soc.} {\bf 124} (1996), 2993--3002.

\bibitem{FK-RGbook}
A. Frieze and M. Karo{\'n}ski,  Introduction to random graphs, Cambridge University Press, Cambridge, 2016. xvii+464 pp. ISBN: 978-1-107-11850-8.

\bibitem{gjs}
M. R. Garey, D. S. Johnson and L. Stockmeyer, Some simplified NP-complete graph problems, {\it Theoret. Comput. Sci.} {\bf 1} (1976), 237--267.

\bibitem{GNS-threshold}
R. Glebov, H. Naves and B. Sudakov, The threshold probability for long cycles. \emph{Combin. Probab. Comput.} 26 (2017), 208--247. 

\bibitem{g}
S. Gutner, The complexity of planar graph choosability, {\it Discrete Math.} {\bf 159} (1996), 119--130.

\bibitem{hkl}
J. Haslegrave, J. Kim and H. Liu, Extremal density for sparse minors and subdivisions, arXiv:2012.02159.

\bibitem{h}
P. J. Heawood, Map colour theorem, {\it Quart. J. Math.} {\bf 24} (1890), 332--338.

\bibitem{hnw}
K. Hendrey, S. Norin and D. R. Wood, Extremal functions for sparse minors, {\it Adv. Comb.} (2022), Paper No. 5, 43 pp.

\bibitem{hls}
H. van der Holst, L. Lov\'{a}sz and A. Schrijver, The Colin de Verdi\`{e}re graph parameter, in {\it Graph Theory and Combinatorial Biology}, vol. 7 of Bolyai Soc. Math. Stud., pp.29--85, J\'{a}nos Bolyai Math. Soc., 1999.

\bibitem{JLR-RGbook}
S. Janson, T. Luczak and A. Rucinski, {\it Random Graphs}, Wiley, New York (2000).

\bibitem{l_homo}
C.-H. Liu, Homomorphism counts in robustly sparse graphs, arXiv:2107.00874.

\bibitem{ls}
L. Lov\'{a}sz and A. Schrijver, A Borsuk theorem for antipodal links and a spectral characterization of linklessly embeddable graphs, {\it Proc. Amer. Math. Soc.} {\bf 126} (1998), 1275--1285.

\bibitem{Luczak-3-col}
T. {\L}uczak, Size and connectivity of the $k$-core of a random graph, {\it Discrete Math}. {\bf 91} (1991),  61--68.

\bibitem{Mader}
W. Mader, Homomorphies\"{a}tze f\"{u}r Graphen, \emph{Math. Ann.} {\bf 178} (1968), 154--168

\bibitem{nrtw}
S. Norin, B. Reed, A. Thomason and D. R. Wood, A lower bound on the average degree forcing a minor, {\it Electron. J. Combin.} {\bf 27} (2020). 

\bibitem{oow}
P. Ossona de Mendez, S. Oum and D. R. Wood,  Defective colouring of graphs excluding a subgraph or minor, {\it Combinatorica} (2018), 1--34.

\bibitem{PSW-core}
B. Pittel, J. Spencer and N. Wormald, Sudden emergence of a giant $k$-core in a random graph, \emph{J. Combin. Theory, Ser. B} {\bf 67} (1996).

\bibitem{rw}
B. Reed and D. R. Wood, Forcing a sparse minor, {\it Combin. Probab. Comput.} {\bf 25} (2016), 300-322.

\bibitem{rw2}
B. Reed and D. R. Wood, `Forcing a sparse minor'--Corrigendum, {\it Combin. Probab. Comput.} {\bf 25} (2016), 323.

\bibitem{rst_survey}
N. Robertson, P. D. Seymour and R. Thomas, A survey of linkless embeddings, in: {\it Graph Structure Theory}, 125--136, {\it Contemp. Math.} {\bf 147} Amer. Math. Soc., Providence, RI, 1993.

\bibitem{rst_color}
N. Robertson, P. Seymour and R. Thomas, Hadwiger's conjecture for $K_6$-free graphs, {\it Combinatorica} {\bf 13} (1993), 279--361.

\bibitem{rst}
N. Robertson, P. Seymour and R, Thomas, Sachs' linkless embedding conjecture, {\it J. Combin. Theory Ser. B} {\bf 64} (1995), 185--227. 

\bibitem{s}
H. Sachs, On spatial representations of finite graphs, in {\it Finite and infinite sets, Vol. I, II (Eger, 1981)}, 649--662, Colloq. Math. Soc. J\'{a}nos Bolyai, 37, North-Holland, Amsterdam, 1984.

\bibitem{ST}
D. A. Spielman and S.-H. Teng, Smoothed analysis of algorithms: Why the simplex algorithm usually takes polynomial time. {\it J. ACM} {\bf 51} (2004), 385--463.

\bibitem{ST2}
D. A. Spielman and S.-H. Teng, Smoothed analysis: an attempt to explain the behavior of algorithms in practice, {\it Communications of the ACM} {\bf 52} (2009), 76--84. 

\bibitem{Thomason}
A. Thomason, An extremal function for contractions of graphs, \emph{Math. Proc. Cambridge Philos. Soc.} {\bf 95} (1984), 261--265.

\bibitem{Thomason-survey}
A. Thomason, Extremal functions for graph minors, {\it More sets, graphs and numbers}, Bolyai Soc. Math. Stud., 15, {\it Springer, Berlin}, (2006), 359--380. 

\bibitem{Thomason-Wales}
A. Thomason and M. Wales, On the extremal function for graph minors, {\it J. Graph Theory} {\bf 101} (2022), 66--78.

\bibitem{t}
C. Thomassen, Every planar graph is 5-choosable, {\it J. Combin. Theory Ser. B} {\bf 62} (1994), 180--181.

\bibitem{Wood-count-clique}
D. Wood, Cliques in graphs excluding a complete graph minor, {\it Electron. J. Combin}. {\bf 23} (2016), no. 3, Paper 3.18, 16 pp. 
}

\end{thebibliography}
\end{document}